\documentclass[12pt]{amsart}

\usepackage{graphicx}
\usepackage{pst-all}
\usepackage{amsmath}
\usepackage{amsfonts}
\usepackage{amssymb}
\usepackage{amsthm}
\usepackage{url}
\usepackage{subfigure}
\input xy
\xyoption{all}
\usepackage{nicefrac}

\addtolength{\textwidth}{4cm}
\addtolength{\evensidemargin}{-2.05cm}
\addtolength{\oddsidemargin}{-2.15cm}
\addtolength{\textheight}{1.1cm} \addtolength{\topmargin}{-0.5cm}

\theoremstyle{theorem}
\newtheorem{theorem}{Theorem}[section]
\newtheorem*{theorem*}{Theorem}
\newtheorem{corollary}[theorem]{Corollary}
\newtheorem{lemma}[theorem]{Lemma}
\newtheorem{proposition}[theorem]{Proposition}
\theoremstyle{definition}
\newtheorem{definition}[theorem]{Definition}
\newtheorem{example}[theorem]{Example}
\newtheorem{remark}[theorem]{Remark}

\thanks{The authors are partially supported by the Spanish Ministerio de Econom{\'{i}}a y Competitividad (grant MTM2015-63612-P) and Ministerio de Ciencia, Innovaci{\'{o}}n y Universidades (grant PGC2018-098321-B-I00).}

\begin{document}

\author{H. Barge}
\address{E.T.S. Ingenieros inform\'{a}ticos. Universidad Polit\'{e}cnica de Madrid. 28660 Madrid (Espa{\~{n}}a)}
\email{h.barge@upm.es}

\author{J. J. S\'anchez-Gabites}
\address{Facultad de Ciencias Econ{\'{o}}micas y Empresariales. Universidad Aut{\'{o}}noma de Madrid, Campus Universitario de Cantoblanco. 28049 Madrid (Espa{\~{n}}a)}
\email{JaimeJ.Sanchez@uam.es}
\keywords{Toroidal set, Geometric index, Attractor}
\subjclass[2010]{54H20, 37B25, 37E99}
\title{The geometric index and attractors of homeomorphisms of $\mathbb{R}^3$}
\begin{abstract} 
In this paper we focus on compacta $K \subseteq \mathbb{R}^3$ which possess a neighbourhood basis that consists of nested solid tori $T_i$. We call these sets toroidal. In \cite{hecyo1} we defined the genus of a toroidal set as a generalization of the classical notion of genus from knot theory. Here we introduce the self-geometric index of a toroidal set $K$, which captures how each torus $T_{i+1}$ winds inside the previous $T_i$. We use this index in conjunction with the genus to approach the problem of whether a toroidal set can be realized as an attractor for a flow or a homeomorphism of $\mathbb{R}^3$. We obtain a complete characterization of those that can be realized as attractors for flows and exhibit uncountable families of toroidal sets that cannot be realized as attractors for homeomorphisms. 
\end{abstract}

\maketitle

\section*{Introduction}

It is well known that attractors of dynamical systems can exhibit a very complicated topological structure. This phenomenon occurs in dimensions three and higher and for both continuous and discrete dynamical systems. In this paper we will mainly focus on discrete dynamical systems on $\mathbb{R}^3$ generated by homeomorphisms $f : \mathbb{R}^3 \longrightarrow \mathbb{R}^3$, although we will also discuss the case of flows.

To understand how complicated attractors can be we may consider the following ``realizability problem'': find criteria that, given a compactum $K \subseteq \mathbb{R}^3$, decide whether there exists a dynamical system on $\mathbb{R}^3$ for which $K$ is an attractor. Several authors have obtained results about realizability problems of this sort, and as an illustration we may refer the reader to \cite{bhatiaszego1}, \cite{garay1}, \cite{gunthersegal1}, \cite{jimenezllibre1}, \cite{peraltajimenez1}, \cite{mio5}, \cite{sanjurjo1} for the continuous case and to \cite{crovisierrams1}, \cite{duvallhusch1}, \cite{gunther1}, \cite{kato1}, \cite{ortegayo1}, \cite{mio6}, \cite{mio7} for the discrete case. An important idea that emerges from these works is that it is not so much the topology of $K$ itself what plays a crucial role in this problem, but rather how $K$ sits in $\mathbb{R}^3$. This may bring to mind notions such as tameness, cellularity, or other similar ones. 

The realizability problem has an easy answer for flows in $\mathbb{R}^3$ (or any $3$--manifold). Recall that a compact set $K \subseteq \mathbb{R}^3$ is called tame if there exists a homeomorphism of $\mathbb{R}^3$ that sends $K$ onto a polyhedron $P$ and wild otherwise. A set $K$ is weakly tame if its complement is indistinguishable from the complement of a tame set, even though $K$ itself may be wild. More formally, a compact set $K \subseteq \mathbb{R}^3$ is weakly tame if there exists a compact polyhedron $P \subseteq \mathbb{R}^3$ such that $\mathbb{R}^3 - K$ and $\mathbb{R}^3 - P$ are homeomorphic. The following general result holds: a compact set $K \subseteq \mathbb{R}^3$ can be realized as an attractor for a flow if and only if it is weakly tame \cite[Theorem 11, p. 6169]{mio5}.

Another classical notion related to how a compactum sits in the ambient space is cellularity. A compactum $K \subseteq \mathbb{R}^3$ is cellular if it has a neighbourhood basis comprised of $3$--cells (that is, sets homeomorphic to closed $3$--balls). The realizability problem for cellular sets has a very simple answer: every cellular set can be realized as an attractor for a flow and therefore also for a homeomorphism. This is a direct consequence of results of Garay \cite{garay1}. (In fact, a stronger result holds true: a compactum $K \subseteq \mathbb{R}^n$ can be realized as a global attractor for a flow if and only if it is cellular).

Given that cells are handlebodies of genus zero, a natural next step is to consider compacta $K \subseteq \mathbb{R}^3$ that have neighbourhood bases comprised of handlebodies of genus one, namely solid tori. We call these compacta \emph{toroidal}. Some well known attractors, such as solenoids, are toroidal sets. The ultimate motivation for this paper is to analyze the realizability problem for toroidal sets, extending the work initiated in \cite{hecyo1}.

Up to homeomorphism there is only one way to place a (polyhedral) cell in $\mathbb{R}^3$, and in fact one cell inside another. This suggests that, in some sense, all cellular sets sit in $\mathbb{R}^3$ in a similar way. By contrast, knot theory shows that there are many ways to place a torus in $\mathbb{R}^3$ and also to place a torus inside another, where in addition to knotting some winding of the smaller torus inside the bigger one can occur. Heuristically this suggests that there may be many different ways in which toroidal sets sit in $\mathbb{R}^3$, and that it should be possible to describe them to some extent in terms of notions somewhat akin to knottedness and winding. In turn, since the realizability of a compactum as an attractor is related to how it sits in $\mathbb{R}^3$, it is to be expected that there should exist some relation between the knottedness and winding of a toroidal set and its realizability as an attractor.

Guided by the general ideas just described, in \cite{hecyo1} we introduced the genus $g(K)$ of a toroidal set $K$ as a generalization of the classical genus of a knot. Under appropriate circumstances this genus can be computed as $g(K) = \lim_j\ g(T_j)$, where $\{T_j\}$ is any neighbourhood basis of solid tori for $K$ and $g(T_j)$ denotes the genus of the knot represented by the torus $T_j$. Toroidal sets may have infinite genus: this is the case, for instance, of connected sums of infinitely many nontrivial knots or of any nontrivial knotted solenoid. We showed that if a toroidal set can be realized as an attractor for a homeomorphism then it must have finite genus, from which it follows that the toroidal sets just mentioned cannot be realized as such. Thus, the genus of a toroidal set provides an obstruction to the realizability problem. It does not provide a complete characterization, however, since there exist toroidal sets with finite genus that cannot be realized as attractors, such as some generalized solenoids studied by G\"unther \cite{gunther1} and many more examples that we shall construct below.

In this paper we investigate yet another feature of toroidal sets which is related to the winding of a torus inside another. Measuring the winding algebraically (taking into account the sense in which the inner torus winds) is not particularly interesting because it essentially reproduces the \v{C}ech cohomology of the toroidal set. Therefore we turn to another measure of the winding which was first introduced by Schubert \cite[\S 9]{schubert} under the name ``order''. Consider a simple closed curve $\gamma$ contained in the interior of a solid torus $T$. The geometric index of $\gamma$ with respect to $T$ is defined as the minimum number of points of intersection of $\gamma$ with any meridional disk $D$ of $T$. Now, if $T_2$ is a solid torus contained in the interior of another solid torus $T_1$, the \emph{geometric index} of $T_2$ with respect to $T_1$ is defined as the geometric index of any core curve of $T_2$ with respect to $T_1$. We denote this index by $N(T_2,T_1)$. The geometric index has a multiplicative property that will play a crucial role in this paper: if $T_3 \subseteq T_2 \subseteq T_1$ are three nested solid tori, then $N(T_3,T_1) = N(T_3,T_2) \cdot N(T_2,T_1)$.  

To obtain an analogue of the geometric index in the realm of toroidal sets suppose that $K$ is a toroidal set written as the intersection of a nested sequence $\{T_j\}$ of solid tori contained in a solid torus $T$. By analogy with the genus of a toroidal set mentioned earlier, a reasonable attempt to define the geometric index of $K$ inside $T$ would be to consider some sort of limit of the geometric indices $N(T_j,T)$. Typically the sequence $N(T_j,T)$ may grow exponentially (as in the dyadic solenoid, for example), suggesting some formula of the sort $\lim_j\ \ln\ N(T_j,T)/j$. However, this runs into difficulties since the limit depends on the sequence $\{T_j\}$, and so a subtler approach is needed. Consider a prime number $p$ and say that ``$p$ divides the geometric index of $K$'' if the exponent of $p$ in the prime factor decomposition of the integer $N(T_j,T)$ goes to infinity as $j$ goes to infinity. This captures some form of weak exponential behaviour of the sequence $N(T_j,T)$. It is easy to show that the multiplicative property of the geometric index ensures that this property is independent of the particular basis $\{T_j\}$. In fact it is also independent of the torus $T$, so one is led to abandon $T$ altogether and define the geometric index of $K$ ``in itself''. Summing up, it makes heuristic and mathematical sense to speak of the ``prime divisors'', or prime factors, of the self-geometric index of $K$.

As it is often the case, for technical reasons the actual definition of the self-geometric index that we present in Section \ref{sec:defn} is quite more abstract and seemingly removed from these heuristic considerations. After discussing some simple algebra in Section \ref{sec:algebra} we define the self-geometric index of a toroidal set $K$ as a certain subgroup $\mathcal{N}(K)$ of the (additive) group of the real numbers. The prime divisors of the index introduced earlier correspond to those prime numbers $p$ such that every element in $\mathcal{N}(K)$ is divisible by $p$. In general the whole group $\mathcal{N}(K)$ contains more information than its prime factors, but in certain cases $\mathcal{N}(K)$ behaves essentially as an integer number in that it has finitely many prime factors and can be completely recovered from them. We will then say that $\mathcal{N}(K)$ is ``number-like'' and, in the particular case when $\mathcal{N}(K)$ has no prime factors, write $\mathcal{N}(K) \sim 1$.

Returning back to dynamics and the realizability problem, we will show that if a toroidal set is an attractor for a homeomorphism then its self-geometric index is number-like. Using these properties we will exhibit examples (in fact, uncountably many different examples) of toroidal sets that cannot be realized as attractors for homeomorphisms. These examples all have genus zero, and so it is not possible to establish their non-attracting nature using our earlier techniques from \cite{hecyo1}. We will also characterize those toroidal sets that can be realized as attractors for flows as those that have finite genus and whose self-index is $\sim 1$. Finally, we will obtain partial converses to the result mentioned earlier that a toroidal set that can be realized as an attractor has finite genus.

The paper is organized as follows. In Section \ref{sec:prelim} we recall some definitions and suggest some bibliography. Section \ref{sec:algebra} discusses some simple algebra. The definition of the self-geometric index of a toroidal set is given in Section \ref{sec:defn}, where we also establish some of its basic properties. Section \ref{sec:wtame} discusses weakly tame toroidal sets and characterizes them in terms of the genus and the self-index. Our interest in the weakly tame case stems from the fact mentioned earlier that this is precisely the class of sets that can be realized as attractors for flows in $\mathbb{R}^3$. Section \ref{sec:construct} shows how to construct families of toroidal sets having a prescribed cohomology and self-index. Finally, Section \ref{sec:dyn} returns to dynamics and establishes the results mentioned in the previous paragraph.

\section{Background definitions} \label{sec:prelim}

A \emph{toroidal} set is a compact set $K \subseteq \mathbb{R}^3$ that is not cellular and has a neighbourhood basis comprised of solid tori. In this paper we will always assume that sets other than toroidal sets are polyhedral (toroidal sets, in general, can have a very complicated structure). For maps we adopt the opposite convention: they will never be assumed to be piecewise linear unless explicitly stated otherwise. A taming result of Moise ensures that a toroidal set has a neighbourhood basis of polyhedral solid tori (see \cite[p. 5]{hecyo1}). We say that two solid tori $T_1$ and $T_2$ are \emph{nested} if $T_2$ is contained in the interior of $T_1$. Here the word ``interior'' may be taken in the topological sense or in the manifold sense: both agree by the invariance of domain theorem. A \emph{standard basis} for a toroidal set $K$ is a sequence of (polyhedral) nested solid tori $\{T_j\}$ whose intersection is $K$.

A framing of a solid torus $T \subseteq \mathbb{R}^3$ is a particular piecewise linear homeomorphism $f : \mathbb{D}^2 \times \mathbb{S}^1 \longrightarrow T$. A \emph{core} curve of $T$ is a curve $\gamma$ of the form $f(0 \times \mathbb{S}^1)$ for some framing $f$. Notice that $\gamma$ lies in the interior of $T$ and $T$ is a regular neighbourhood of $\gamma$ (recall our convention about the polyhedral nature of sets unless otherwise stated explicitly). Core curves are not unique, but any two core curves are related by an isotopy of $T$ that is the identity on $\partial T$. A meridian of $T$ is a curve of the form $f(\partial \mathbb{D}^2 \times *)$ for some framing of $T$ (here $*$ denotes any point in $\mathbb{S}^1$). Alternatively, meridians can be characterized as those simple closed curves in $\partial T$ which bound a disk in $T$ but not in $\partial T$. A \emph{meridional disk} of $T$ is a disk $D \subseteq T$ that is properly embedded (that is, $D \cap \partial T = \partial D$) and whose boundary $\partial D$ is a meridian of $T$. 

Consider a pair of nested solid tori $T_2 \subseteq T_1$. The \emph{geometric index of $T_2$ in $T_1$} is the minimum number of points of intersection of a core curve $\gamma$ of $T_2$ with any meridional disk $D$ of $T_1$. The minimum is taken over all meridional disks that intersect $\gamma$ transversely, so that $D \cap \gamma$ consists of finitely many points. Since any two core curves of $T_2$ are related by an isotopy of $T_2$ that is the identity on $\partial T_2$ and can therefore be extended to an isotopy of $T_1$, it follows that this definition is independent of the particular core curve $\gamma$. The geometric index is multiplicative: is $T_3 \subseteq T_2 \subseteq T_1$ are three nested solid tori, then $N(T_3,T_1) = N(T_3,T_2) \cdot N(T_2,T_1)$ (see \cite[Satz 3, p. 175]{schubert}). We will make extensive use of this property.

Suppose $T_2 \subseteq T_1$ and $T'_2 \subseteq T'_1$ are two pairs of nested solid tori. It is clear from the definition of the geometric index that if there exists a piecewise linear homeomorphism between the pairs $(T_1,T_2)$ and $(T'_1,T'_2)$ then the geometric indices $N(T_2,T_1)$ and $N(T'_2,T'_1)$ are equal. We will need a slightly stronger statement, however, which is given in Lemma \ref{lem:rel_hom} below. We have opted for a quick, if not particularly neat, proof:

\begin{lemma} \label{lem:rel_hom} Suppose that $f : (T_1,T_2) \longrightarrow (T'_1,T'_2)$ is a homeomorphism between two pairs of nested polyhedral tori. Assume that $f$ is piecewise linear on $T_1 - {\rm int}\ T_2$. Then the geometric indices of the two pairs are equal.
\end{lemma}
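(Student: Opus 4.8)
The plan is to exploit the fact that the hypothesis only constrains $f$ on the shell $S := T_1 - {\rm int}\ T_2$, and to discard the (merely topological) restriction $f|_{T_2}$ altogether, replacing it by a piecewise linear homeomorphism built by hand. Concretely, I would first record that $f$ carries $S$ piecewise linearly and homeomorphically onto $S' := T'_1 - {\rm int}\ T'_2$, taking $\partial T_1$ to $\partial T'_1$ and $\partial T_2$ to $\partial T'_2$ (the latter because a homeomorphism of pairs sends $\partial T_2$ to $\partial T'_2$ by invariance of the boundary). In particular the restriction $\varphi := f|_{\partial T_2} : \partial T_2 \longrightarrow \partial T'_2$ is a PL homeomorphism, since $\partial T_2 \subseteq S$.

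The key point is that $\varphi$ takes meridians of $T_2$ to meridians of $T'_2$, and this is where the topological homeomorphism $f|_{T_2} : T_2 \longrightarrow T'_2$ is used. If $m \subseteq \partial T_2$ is a meridian, it bounds a disk in $T_2$, so $\varphi(m)$ bounds a (possibly wild) disk in $T'_2$; and $\varphi(m)$ does not bound in $\partial T'_2$, because $\varphi$ is a homeomorphism of the boundary torus and $m$ does not bound in $\partial T_2$. By the characterization of meridians recalled above, $\varphi(m)$ is then a meridian of $T'_2$. Now I would invoke the standard fact that a PL homeomorphism between the boundaries of two solid tori which preserves the meridian slope extends to a PL homeomorphism of the solid tori: one extends $\varphi|_m$ over a pair of meridional disks $D \subseteq T_2$, $D' \subseteq T'_2$ by coning, cuts $T_2$ and $T'_2$ along them to obtain two PL $3$--balls, and applies the PL version of Alexander's trick to the resulting homeomorphism of $2$--spheres. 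Let $g : T_2 \longrightarrow T'_2$ denote such a PL extension of $\varphi$.

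Finally I would glue: define $F : T_1 \longrightarrow T'_1$ by $F = f$ on $S$ and $F = g$ on $T_2$. The two pieces agree on the common boundary $\partial T_2$ (both equal $\varphi$ there), so $F$ is a well-defined continuous bijection of compacta, hence a homeomorphism, and it is piecewise linear because both pieces are and they agree on the subcomplex $\partial T_2$. By construction $F(T_2) = T'_2$, so $F$ is a PL homeomorphism of pairs $(T_1,T_2) \longrightarrow (T'_1,T'_2)$, and the remark preceding the statement yields $N(T_2,T_1) = N(T'_2,T'_1)$. I expect the only genuinely delicate step to be the meridian--preservation claim, precisely because $f|_{T_2}$ is allowed to be wild: one must be sure that ``bounding a disk in the solid torus'' is insensitive to whether the disk is PL or merely topological, which is exactly what singles out meridians among the essential boundary curves and what makes the pairwise hypothesis (rather than a hypothesis only on the shell) indispensable.
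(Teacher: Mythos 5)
Your proof is correct, but it takes a genuinely different route from the paper's. The paper disposes of the lemma in two lines by invoking an approximation theorem of Bing: there exists a piecewise linear homeomorphism $f' : T_1 \longrightarrow T'_1$ that agrees with $f$ on $T_1 - {\rm int}\ T_2$; such an $f'$ automatically carries $T_2$ onto $T'_2$, hence is a PL homeomorphism of pairs, and the index equality follows from the remark preceding the lemma. You instead avoid approximation theory altogether and rebuild the map over $T_2$ by hand: meridian preservation (which is the only place the merely topological map $f|_{T_2}$ is used), followed by the standard extension of a meridian-preserving PL homeomorphism of boundary tori, obtained by coning over meridional disks, cutting to get PL $3$--balls, and applying the PL Alexander trick. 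What the paper's route buys is brevity and generality: Bing's theorem makes no use of the fact that the pieces are solid tori, so the same two-line argument would work for any pair of compact polyhedral $3$--manifolds, at the cost of citing a deep taming result. Your route buys elementarity and self-containedness, since everything is classical solid-torus PL topology. The one step deserving a fuller justification is exactly the one you flag: the characterization of meridians (``bounds a disk in $T$ but not in $\partial T$'') must be known to hold when the disk is topological and possibly wild. It does: bounding any embedded disk forces $\varphi(m)$ to be null-homotopic in $T'_2$, hence its class in $H_1(\partial T'_2)$ has zero longitude coefficient; since $\varphi(m)$ is simple and essential in $\partial T'_2$, its class is then $\pm$ the meridian class, and by Dehn's lemma (or a direct isotopy-extension argument) it bounds an embedded PL meridional disk, so it is a meridian. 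With that observation made explicit, your gluing argument is complete and yields a PL homeomorphism of pairs, as required.
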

\begin{proof} By an approximation theorem of Bing (see for instance \cite[Theorem 4, p. 149]{bing2}) there exists a piecewise linear homeomorphism $f' : T_1 \longrightarrow T'_1$ which coincides with $f$ on $T_1 - {\rm int}\ T_2$. In particular it sends $T_2$ onto $T'_2$ and therefore provides a piecewise linear homeomorphism from $(T_1,T_2)$ onto $(T'_1,T'_2)$. The result follows.
\end{proof}

A natural way of constructing toroidal sets is, of course, by taking a nested sequence of tori $\{T_j\}$ and letting $K := \bigcap_{j \geq 0} T_j$. Although such a $K$ certainly has a neighbourhood basis of solid tori, to guarantee that $K$ is a toroidal set we must also make sure that it is not cellular. The following proposition provides an easy criterion to check this in terms of the geometric index.

\begin{proposition} \label{prop:cellular} Let $K$ be defined as the intersection of a nested sequence of solid tori $\{T_j\}$. Then $K$ is cellular if and only if $N(T_{j+1},T_j) = 0$ for infinitely many $j$.
\end{proposition}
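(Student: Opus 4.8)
The plan is to reduce the whole statement to a single geometric characterization of when a consecutive index vanishes: for nested solid tori $T' \subseteq {\rm int}\ T$ one has $N(T',T) = 0$ if and only if $T'$ is contained in a $3$--cell $C$ with $T' \subseteq {\rm int}\ C \subseteq C \subseteq {\rm int}\ T$. I would establish this equivalence first, since both implications of the Proposition then follow almost formally. For the ``if'' direction I would begin with an arbitrary meridional disk $D$ of $T$, put it in general position with respect to $\partial C$, and remove the circles of $D \cap \partial C$ by innermost--disk surgery: an innermost circle on the sphere $\partial C$ bounds a disk $E \subseteq \partial C$ meeting $D$ only along its boundary, and the subdisk it cuts off on $D$ together with $E$ is a $2$--sphere, which by irreducibility of ${\rm int}\ T$ bounds a ball; replacing one disk by the other yields a meridional disk meeting $\partial C$ in strictly fewer curves. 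Iterating produces a meridional disk disjoint from $\partial C$, hence (being connected with boundary on $\partial T$, outside $C$) disjoint from $C$ and in particular from a core of $T'$, so $N(T',T) = 0$. For the ``only if'' direction I would run the dual argument: starting from a meridional disk $D$ missing a core curve $\gamma$ of $T'$, I would cancel the circles of $D \cap \partial T'$ innermost on $D$; a trivial circle on $\partial T'$ is removed by an isotopy, whereas a meridian of $T'$ either bounds a meridional disk of $T'$ inside $D$ (forcing $\gamma$ to meet $D$, a contradiction) or bounds in the complement (impossible, since then $\gamma$ would cross a separating sphere an odd number of times). This arranges $D \cap T' = \emptyset$, and cutting $T$ along $D$ exhibits a ball containing $T'$, which I enlarge slightly to get $T'$ in its interior.

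Granting the equivalence, the implication ``infinitely many vanishing indices $\Rightarrow$ $K$ cellular'' goes as follows. Choose a subsequence $j_1 < j_2 < \cdots$ with $N(T_{j_k+1}, T_{j_k}) = 0$. The equivalence provides cells $C_k$ with $T_{j_k+1} \subseteq {\rm int}\ C_k \subseteq C_k \subseteq {\rm int}\ T_{j_k}$. Since $j_{k+1} \geq j_k + 1$ we have $T_{j_{k+1}} \subseteq T_{j_k+1} \subseteq {\rm int}\ C_k$, whence $C_{k+1} \subseteq {\rm int}\ T_{j_{k+1}} \subseteq {\rm int}\ C_k$, so $\{C_k\}$ is a nested sequence of cells. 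Moreover $K \subseteq T_{j_{k+1}} \subseteq C_k$ for every $k$ while $C_k \subseteq T_{j_k}$, so $\bigcap_k C_k = \bigcap_k T_{j_k} = K$. Thus $K$ has a neighbourhood basis of cells and is cellular.

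For the converse I argue by contraposition. Suppose $N(T_{j+1}, T_j) = 0$ for only finitely many $j$, so there is $j_0$ with $N(T_{j+1}, T_j) \geq 1$ for all $j \geq j_0$, and suppose towards a contradiction that $K$ is cellular. Using that the cells witnessing cellularity and the tori $\{T_j\}$ are both neighbourhood bases of $K$, I can interpolate a single cell $C$ with $T_b \subseteq {\rm int}\ C \subseteq C \subseteq {\rm int}\ T_{j_0}$ for some $b > j_0$. The equivalence then forces $N(T_b, T_{j_0}) = 0$, whereas the multiplicative property gives $N(T_b, T_{j_0}) = \prod_{j=j_0}^{b-1} N(T_{j+1}, T_j) \geq 1$, a contradiction. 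Hence $K$ is not cellular, which completes the proof.

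The main obstacle I anticipate is the equivalence itself, and specifically making the two innermost--disk reductions clean: at each surgery one must check that the modified disk is again a properly embedded meridional disk (its boundary is untouched and the new interior piece lies in ${\rm int}\ T$) and that the number of intersection curves genuinely decreases, and one must invoke the irreducibility of ${\rm int}\ T$ (every $2$--sphere in it bounds a ball) to justify the elementary isotopies. The remaining bookkeeping---achieving strict nesting by slightly enlarging the cells and interpolating a cell between two tori---is routine once the equivalence is available.
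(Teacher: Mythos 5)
Your top-level structure is sound and is in fact the same as the paper's: everything reduces to the equivalence ``$N(T',T)=0$ if and only if $T'$ lies in a $3$--cell inside $T$'' (this is Schubert's Satz 1, which the paper cites rather than proves in full) together with the multiplicativity of the index, and your bookkeeping deriving both implications of the Proposition from that equivalence is correct and essentially identical to the paper's argument. Your proof of the ``cell $\Rightarrow$ index $0$'' half of the equivalence, by surgery along circles of $D \cap \partial C$, is also fine.

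The gap is in the other half, ``index $0 \Rightarrow$ cell''. Your case analysis for an innermost circle $c$ of $D \cap \partial T'$ is incomplete: you treat only circles that are trivial on $\partial T'$ or meridians of $T'$. But if the innermost disk $E \subseteq D$ bounded by $c$ lies \emph{outside} $T'$ and $c$ is essential on $\partial T'$, then $c$ is nullhomologous in $S^3 - {\rm int}\ T'$, hence $c$ is a preferred longitude of $T'$ --- not a meridian. Indeed a meridian generates $H_1(S^3 - {\rm int}\ T') \cong \mathbb{Z}$ and can never bound in the complement, which is why your ``separating sphere'' argument (which needs a meridional disk of $T'$ as the other half of the sphere) applies only to that vacuous case and says nothing about longitudes. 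The longitude case genuinely occurs: take $T'$ unknotted and contained in a ball inside $T$, with $D$ passing through the hole of $T'$; the flat disk spanning the hole is an innermost disk whose boundary is a longitude, and no contradiction arises. The omission is repairable rather than fatal: if a longitude of $T'$ bounds a disk $E$ outside $T'$ with interior disjoint from $\partial T'$, then $T' \cup N(E)$ (a solid torus with a $2$--handle attached along a longitude) is a $3$--ball, so a regular neighbourhood of $T' \cup E$ is already the desired cell containing $T'$ inside $T$ and the induction terminates there. A similar benign omission occurs in your trivial-circle step, where the ball bounded by the two disks could a priori contain $T'$ --- again this hands you the cell directly. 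As written, though, the claimed impossibility is false, so your self-contained proof of the key equivalence (the one point where you go beyond the paper, which instead defers to Schubert) is incomplete at exactly this step.
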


For the proof we make use of the following fact: if $T_2 \subseteq T_1$ is a nested pair of solid tori, then $N(T_2,T_1) = 0$ if and only if $T_2$ is contained in a $3$--cell $B$ inside $T$. This should be intuitively clear, and a very sketchy argument will be given in the proof of the proposition. A formal proof can be found in \cite[Satz 1, p. 171]{schubert}.

\begin{proof}[Proof of Proposition \ref{prop:cellular}] Assume first that $K$ is cellular. Choose any $T_{j_0}$. Since $K$ is cellular, it has a neighbourhood $B$ which is a $3$--cell contained in ${\rm int}\ T_{j_0}$. In turn, take $j_1$ such that $T_{j_1} \subseteq {\rm int}\ B$. Shrink the cell $B$ by an ambient isotopy of $T_{j_0}$ (relative to $\partial T_{j_0}$) until it becomes extremely small. It is then clear that there is a meridional disk $D$ of $T_{j_0}$ that is disjoint from the shrinked cell. Carrying $D$ along with the reverse isotopy one obtains a meridional disk of $T_{j_0}$ that is disjoint from the original cell $B$, and consequently also from $T_{j_1}$. Thus the geometric index of $T_{j_1}$ in $T_{j_0}$ is zero. The multiplicativity of the index then implies that $N(T_{j+1},T_j) = 0$ for some $j_0 \leq j < j_1$.

Conversely, assume that $N(T_{j+1},T_j) = 0$ for infinitely many $j$. Choose any $T_{j_0}$ and pick $j \geq j_0$ such that $N(T_{j+1},T_j) = 0$. Then $N(T_{j+1},T_{j_0}) = 0$ too by the multiplicative property of the geometric index. This means that there exists a meridional disk $D$ of $T_{j_0}$ such that $T_{j+1}$ is disjoint from $D$. Thickening $D$ to a regular neighbourhood (relative to $\partial T_{j_0}$) small enough that it is still disjoint from $T_{j+1}$ and then removing its interior from $T_{j_0}$ produces a polyhedral $3$--cell $B$ contained in $T_{j_0}$. Thus $K$ has a neighbourhood basis of $3$--cells and is therefore cellular.
\end{proof}

{\it Warning.} Whenever we construct a toroidal set as an intersection of a sequence of nested tori we will never check explicitly that it is not cellular, since this will always be a direct consequence of the above proposition.
\medskip

Consider once more a nested pair of solid tori $T_2 \subseteq T_1$. The inclusion induces a map $H^1(T_1;\mathbb{Z}) \longrightarrow H^1(T_2;\mathbb{Z})$ which, after identifying each of the cohomology groups with $\mathbb{Z}$ with an appropriate orientation, simply becomes multiplication by a nonnegative integer $w \geq 0$. This is usually called the \emph{winding number} of $T_2$ inside $T_1$. It is not difficult to see that, if $D$ is any meridional disk of $T_1$ and $\gamma$ is a core curve of $T_2$ transverse to $D$, then $w$ is the (absolute value of the) number of intersections of $\gamma$ with $D$ counted with a sign according to the sense in which $\gamma$ crosses $D$. In particular, choosing $D$ to minimize the number of points of intersection with $\gamma$ one has that $w \leq N(T_2,T_1)$ and also that $w$ and $N(T_2,T_1)$ have the same parity.

The (\v{C}ech) cohomology of toroidal sets will play a role in this paper, so we devote a few lines to describe it now. Let $\{T_j\}$ be a standard basis for a toroidal set $K$. By the continuity property of \v{C}ech cohomology, $\check{H}^q(K;\mathbb{Z})$ is the direct limit of the direct sequence $\check{H}^q(T_j;\mathbb{Z}) \longrightarrow \check{H}^q(T_{j+1};\mathbb{Z})$, where the arrows are induced by the inclusions $T_{j+1} \subseteq T_j$. In degrees $q \geq 2$ clearly $\check{H}^q(K;\mathbb{Z}) = 0$. In degree $q = 1$, and according to the discussion in the previous paragraph, each of these arrows can be identified with $\mathbb{Z} \stackrel{\cdot w_j}{\longrightarrow} \mathbb{Z}$, where $w_j$ is the winding number of $T_{j+1}$ inside $T_j$. It is very easy to check (or see a proof in \cite[Proposition 1.6]{hecyo1}) that there are three mutually exclusive possibilities for $\check{H}^1(K;\mathbb{Z})$:
\begin{itemize}
	\item[(i)] If infinitely many of the $w_j$ vanish, then $\check{H}^1(K;\mathbb{Z}) = 0$. We then say that $K$ is \emph{trivial}.
	\item[(ii)] If $w_j = 1$ from some $j$ onwards, then $\check{H}^1(K;\mathbb{Z}) = \mathbb{Z}$.
	\item[(iii)] Otherwise $\check{H}^1(K;\mathbb{Z})$ is not finitely generated.
\end{itemize}

We conclude this section by recalling the definition of the genus of a toroidal set. For information about knot theory we refer the reader to the books by Burde and Zieschang \cite{burdezieschang1}, Lickorish \cite{Lickorish1} or Rolfsen \cite{rolfsen1}. Define the genus $g(T)$ of a solid torus $T$ as the genus of a core curve of $T$ (this definition is correct because core curves are unique up to isotopy). For a toroidal set $K \subseteq \mathbb{R}^3$ define its \emph{genus} $g(K)$ as the minimum $g = 0,1,\ldots, \infty$ such that $K$ has arbitrarily small neighbourhoods that are solid tori of genus $\leq g$. Notice that if $K$ has finite genus $g$ then it has a standard basis $\{T_j\}$ such that $g(T_j) = g$ for every $j$. Clearly a toroidal set $K$ has genus zero if and only if it has a neighbourhood basis of solid tori $\{T_j\}$ of genus zero, which means that they are unknotted. These toroidal sets we shall call \emph{unknotted}.

\section{Some algebraic preliminaries} \label{sec:algebra}

We shall begin with some very simple algebra. Given a sequence of nonnegative integers $\{m_j\}_{j \geq 1}$, consider the direct sequence  \begin{equation} \label{eq:dir_lim_Z} \xymatrix{\mathbb{Z} \ar[r]^{\cdot m_1} & \mathbb{Z} \ar[r]^{\cdot m_2} \ar[r] & \mathbb{Z} \ar[r] & \ldots },\end{equation} where each arrow is multiplication by $m_j$. Denote by $G$ the direct limit of this sequence. The following mutually exclusive alternatives hold:
\begin{itemize}
	\item[(i)] $G = 0$ if and only if $m_j = 0$ for infinitely many $j$.
	\item[(ii)] $G = \mathbb{Z}$ if and only if $m_j = 1$ from some $j$ onwards.
	\item[(iii)] Otherwise $G$ is not finitely generated.
\end{itemize}

Let $p$ be a prime number. If every element in $G$ is divisible by $p$ we then say that \emph{$p$ divides $G$}, or that \emph{$p$ is a prime factor of $G$}, and write $p | G$. The following proposition provides an alternative characterization of this notion. The proof is extremely simple but we include it for completeness:

\begin{proposition} \label{prop:divisors} With the above notation, $p | G$ if and only if $p$ divides infinitely many of the $m_j$.
\end{proposition}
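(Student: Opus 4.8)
The plan is to work directly with the standard model of the direct limit. Write $\iota_j : \mathbb{Z} \to G$ for the canonical map from the $j$-th copy of $\mathbb{Z}$, so that the defining relation is $\iota_j = \iota_{j+1} \circ (\cdot\, m_j)$. Iterating this identity gives $\iota_j(n) = \iota_k\big(m_{k-1} \cdots m_j \cdot n\big)$ for every $k \geq j$, so the subgroups $\iota_j(\mathbb{Z})$ form an increasing chain whose union is all of $G$. In particular every element of $G$ has the form $\iota_j(n)$ for some $j$ and some $n \in \mathbb{Z}$, and two such elements are equal in $G$ precisely when they become equal after pushing both into a sufficiently late copy.

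First I would prove the easy implication: if $p$ divides infinitely many of the $m_j$, then every element of $G$ is divisible by $p$. Given $x = \iota_j(n)$, choose $k \geq j$ with $p \mid m_k$ and push $x$ forward to the $k$-th copy, writing $x = \iota_k(N)$ with $N = m_{k-1}\cdots m_j \cdot n$. Applying the transition map once more gives $x = \iota_{k+1}(m_k N)$, and since $p \mid m_k$ we may factor $m_k N = p \cdot (m_k/p)\,N$, whence $x = p \cdot \iota_{k+1}\big((m_k/p)\,N\big)$ is divisible by $p$.

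For the converse I would argue by contraposition: assuming $p$ divides only finitely many $m_j$, I would exhibit a single element of $G$ that is not divisible by $p$. Fix $J$ with $p \nmid m_j$ for all $j \geq J$. The key observation is that for these indices $m_j \neq 0$, since $p \mid 0$; consequently each transition map beyond stage $J$ is multiplication by a nonzero integer and hence injective, which forces the canonical map $\iota_k$ to be injective for every $k \geq J$. I claim $\iota_J(1)$ is the desired element. Indeed, suppose $\iota_J(1) = p\,y$ for some $y \in G$; choosing $k \geq J$ with $y = \iota_k(b)$ and pushing $\iota_J(1)$ into the $k$-th copy, the equality reads $\iota_k(m_{k-1}\cdots m_J) = \iota_k(pb)$. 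Injectivity of $\iota_k$ yields $m_{k-1}\cdots m_J = pb$. But the left-hand side is a product of integers none of which is divisible by $p$ (an empty product equal to $1$ when $k = J$), so it is not divisible by the prime $p$, a contradiction.

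The only genuinely delicate point is the converse, and within it the observation that the hypothesis $p \nmid m_j$ for $j \geq J$ secretly rules out $m_j = 0$ and thereby makes the relevant transition maps injective; without injectivity one could not conclude $m_{k-1}\cdots m_J = pb$ from the corresponding equality in $G$. Everything else reduces to bookkeeping with the transition maps of the direct limit together with the primality of $p$.
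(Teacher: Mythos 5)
Your proof is correct, and while the forward implication coincides with the paper's, your converse takes a genuinely different route. The paper argues the converse directly: fix an arbitrary $j_0$, take the element $z$ represented by $1 \in \mathbb{Z}_{(j_0)}$, write $pz' = z$, and use the fact that equality in the direct limit means \emph{eventual equality of representatives} to conclude $p \mid z_{(j)} = m_{j_0} \cdots m_{j-1}$; primality of $p$ then yields $p \mid m_k$ for some $k \geq j_0$, and arbitrariness of $j_0$ finishes. That mechanism works unconditionally — zero transition maps cause no trouble, since $p$ divides $0$ and hence divides the offending factor. Your contrapositive instead buys you a different tool, injectivity: once $p \nmid m_j$ for all $j \geq J$, those $m_j$ are automatically nonzero, so the canonical maps $\iota_k$ ($k \geq J$) are injective, and the divisibility equation in $G$ descends to an honest equation $m_{k-1} \cdots m_J = pb$ in $\mathbb{Z}$, contradicted by primality. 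You correctly identify this as the delicate point: injectivity genuinely fails in general (some $m_j$ may vanish), so your argument could not be run directly and the contraposition is essential to it, whereas the paper's eventual-equality step sidesteps the issue entirely, which is what makes its direct proof slightly shorter. What your version buys in exchange is a concrete witness — the single element $\iota_J(1)$ not divisible by $p$ — and an explicit isolation of where nonvanishing of the transition maps matters; both proofs ultimately rest on the same product formula $z_{(j)} = z_{(j_0)} \cdot m_{j_0} \cdots m_{j-1}$ and the primality of $p$.
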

\begin{proof} It will be convenient to attach a subscript $(j)$ to each copy of $\mathbb{Z}$ in \eqref{eq:dir_lim_Z} to distinguish them. We choose the notation so that the arrows read $\xymatrix{\mathbb{Z}_{(j)} \ar[r]^{\cdot m_j} & \mathbb{Z}_{(j+1)}}$. Any element $z \in G$ is represented by some $z_{(j_0)} \in \mathbb{Z}_{(j_0)}$. Of course, $z$ is also represented by any of its succesive images $z_{(j)} \in \mathbb{Z}_{(j)}$ along the sequence. The relation between $z_{(j_0)}$ and $z_{(j)}$ is just \begin{equation} \label{eq:jj0} z_{(j)} = z_{(j_0)} \cdot m_{j_0} \cdot m_{j_0+1} \cdot \ldots \cdot m_{j-1}.\end{equation}

Suppose first that $p$ divides infinitely many of the $m_j$. Let $z$ be represented by $z_{(j_0)} \in \mathbb{Z}_{(j_0)}$. Our assumption implies that there exists $j \geq j_0$ such that $p | m_j$. From \eqref{eq:jj0} we see that $p | z_{(j+1)}$, and then $z'_{(j+1)} := z_{(j+1)}/p \in \mathbb{Z}_{(j+1)}$ represents an element $z' \in G$ such that $pz' = z$.

Conversely, assume every element in $G$ is divisible by $p$. Fix $j_0$ and consider the element $z \in G$ represented by $z_{(j_0)} = 1 \in \mathbb{Z}_{(j_0)}$. By assumption there exists $z' \in G$ such that $pz' = z$. Then for big enough $j$ we must have $pz'_{(j)} = z_{(j)}$, so in particular $p | z_{(j)}$. But then from \eqref{eq:jj0} we see that $p$ divides some $m_k$ with $j_0 \leq k < j$. Since $j_0$ was arbitrary, it follows that $p$ divides infinitely many of the $m_j$.
\end{proof}

Using the proposition one can easily see that $G$ may not have any prime factors at all or, at the other extreme, it may have infinitely many of them. In fact, it is easy to produce groups $G$ having any prescribed set $P$ of prime numbers as prime divisors:

\begin{example} \label{ex:p_div} Given any set $P$ of prime numbers, there exists a group $G$ whose prime divisors are precisely the elements of $P$. For instance:
\begin{itemize}
	\item[(i)] When $P = \emptyset$ choose $m_j$ to be the $j$th prime number.
	\item[(ii)] When $P$ is finite choose all the $m_j$ to be equal to $m$, the product of all the elements in $P$.
	\item[(iii)] When $P$ is infinite, let $\{p_k\}$ be an enumeration of its elements and choose $m_j := p_1 \cdot \ldots \cdot p_j$.
\end{itemize}
\end{example}

If all the $m_j$ are equal to some number $m \geq 1$ for big enough $j$ we will say that $G$ is \emph{number-like}\footnote{This terminology is slightly abusive, because as introduced it describes not an intrinsic property of $G$ but rather of the particular description of $G$ afforded by the direct sequence \eqref{eq:dir_lim_Z} of $G$.}. It is well known that $G$ can then be identified with the $m$--adic rationals; that is, the subgroup of $\mathbb{Q}$ that consists of numbers of the form $\nicefrac{z}{m^k}$ with $z \in \mathbb{Z}$ and $k$ a nonnegative integer. This subgroup is usually denoted by $\mathbb{Z}[\nicefrac{1}{m}]$. We are now going to show that the prime divisors of a number-like $G$ determine it completely:

\begin{proposition} \label{prop:n_like} Let $G$ be number-like. Then it has finitely many prime divisors $p_1, \ldots, p_r$ and $G = \mathbb{Z}[\nicefrac{1}{(p_1 \cdot \ldots \cdot p_r)}]$.
\end{proposition}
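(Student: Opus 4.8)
The plan is to show two things: (1) a number-like $G$ has only finitely many prime divisors, and in fact these are exactly the primes dividing the eventual common value $m$; and (2) the explicit isomorphism $G \cong \mathbb{Z}[\nicefrac{1}{m}]$ (already noted in the text) identifies $G$ with $\mathbb{Z}[\nicefrac{1}{(p_1 \cdots p_r)}]$, where $p_1, \ldots, p_r$ are the distinct prime factors of $m$.

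First I would invoke the hypothesis that $G$ is number-like: there is an integer $m \geq 1$ and an index $j_0$ such that $m_j = m$ for all $j \geq j_0$. By Proposition \ref{prop:divisors}, a prime $p$ divides $G$ if and only if $p$ divides infinitely many of the $m_j$. Since all but finitely many $m_j$ equal $m$, a prime divides infinitely many $m_j$ precisely when it divides $m$ itself. Hence the prime divisors of $G$ are exactly the prime factors $p_1, \ldots, p_r$ of $m$, and there are finitely many of them. (If $m = 1$ there are none, recovering the $\mathcal{N}(K) \sim 1$ case.)

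Next I would pin down the group structure. The tail of the direct sequence \eqref{eq:dir_lim_Z} from index $j_0$ onward is cofinal, so $G$ is the direct limit of the constant system $\mathbb{Z} \xrightarrow{\cdot m} \mathbb{Z} \xrightarrow{\cdot m} \cdots$; as the text already remarks, this limit is canonically $\mathbb{Z}[\nicefrac{1}{m}]$, the subgroup of $\mathbb{Q}$ generated by $\nicefrac{1}{m^k}$. It remains only to observe that $\mathbb{Z}[\nicefrac{1}{m}]$ depends on $m$ solely through its set of prime divisors: inverting $m$ inverts exactly the primes $p_1, \ldots, p_r$ dividing $m$, and a rational lies in $\mathbb{Z}[\nicefrac{1}{m}]$ iff its denominator (in lowest terms) is a product of these primes. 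Consequently $\mathbb{Z}[\nicefrac{1}{m}] = \mathbb{Z}[\nicefrac{1}{(p_1 \cdots p_r)}]$, since $m$ and $p_1 \cdots p_r$ have identical prime supports. This gives the claimed equality $G = \mathbb{Z}[\nicefrac{1}{(p_1 \cdots p_r)}]$.

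This proof is essentially bookkeeping, so I do not anticipate a genuine obstacle; the only point requiring a modicum of care is the remark that $\mathbb{Z}[\nicefrac{1}{m}]$ is insensitive to the multiplicities of the primes in $m$ — that is, that $\mathbb{Z}[\nicefrac{1}{m}] = \mathbb{Z}[\nicefrac{1}{\mathrm{rad}(m)}]$ where $\mathrm{rad}(m) = p_1 \cdots p_r$ is the radical of $m$. This follows because $m \mid (p_1 \cdots p_r)^k$ for large $k$ and $(p_1 \cdots p_r) \mid m^k$ for large $k$, so the two subgroups of $\mathbb{Q}$ generated by inverting $m$ and by inverting $p_1 \cdots p_r$ coincide. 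I would also make sure to handle the degenerate case $m = 1$ (equivalently $r = 0$), where $G = \mathbb{Z} = \mathbb{Z}[\nicefrac{1}{1}]$ and there are no prime divisors, in order for the statement to hold uniformly.
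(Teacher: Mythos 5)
Your proposal is correct and follows essentially the same route as the paper's proof: identify the prime divisors of $G$ with those of $m$ via Proposition \ref{prop:divisors}, use the standard identification $G = \mathbb{Z}[\nicefrac{1}{m}]$, and then show $\mathbb{Z}[\nicefrac{1}{m}] = \mathbb{Z}[\nicefrac{1}{(p_1 \cdots p_r)}]$ from the mutual divisibility relations $p_1 \cdots p_r \mid m$ and $m \mid (p_1 \cdots p_r)^k$ for large $k$, which is exactly the computation the paper performs with $n = \max n_i$. Your explicit treatment of the degenerate case $m = 1$ is a minor addition the paper only mentions in passing.
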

\begin{proof} Since $G$ is number-like, it is the direct limit of \eqref{eq:dir_lim_Z} with $m_j = m \geq 1$ for large enough $j$. Then by Proposition \ref{prop:divisors} the prime divisors of $G$ are exactly the prime divisors of $m$ (in the sense of ordinary arithmetics). In particular $G$ has finitely many prime divisors (and it has none exactly when $m = 1$, or equivalently when $G = \mathbb{Z}$). Write $m = p_1^{n_1} \cdot \ldots \cdot {p_r}^{n_r}$ for appropriate exponents $n_i \geq 1$. Also, set $m_0 := p_1 \cdot \ldots \cdot p_r$ for brevity. Notice that $m_0 | m$.

Now we prove that $G = \mathbb{Z}[\nicefrac{1}{m_0}]$. Since $G$ can be identified with the subgroup of $m$--adic rationals, we only need to show that the $m$--adic rationals are the same as the $m_0$--adic rationals. First, every $m_0$--adic rational can be written as an $m$--adic rational as follows: \[\frac{z}{m_0^k} = \frac{z}{m^k} \cdot \frac{m^k}{m_0^k} = \frac{z \cdot \left(\nicefrac{m}{m_0}\right)^k}{m^k}  \in \mathbb{Z}[\nicefrac{1}{m}]\] because $m_0 | m$. To prove that every $m$--adic rational is an $m_0$--adic rational we first perform the following computation. Letting $n = \max\ n_i$, we can write $m$ as follows: \[m = p_1^{n_1} \cdot \ldots \cdot {p_r}^{n_r} =  \frac{(p_1 \cdot \ldots \cdot p_r)^n}{p_1^{n-n_1} \cdot \ldots \cdot p_r^{n-n_r}} = \frac{m_0^n}{m'}\] where $m' = p_1^{n-n_1} \cdot \ldots \cdot p_r^{n-n_r} \in \mathbb{Z}$ because of the definition of $n$. Then any $m$--adic rational can be expressed \[\frac{z}{m^k} = \frac{z}{(\nicefrac{m_0^n}{m'})^k} = \frac{z \cdot m'^k}{m_0^{nk}} \in \mathbb{Z}[\nicefrac{1}{m_0}].\]
\end{proof}

We thus see that when $G$ is number-like it conveys exactly the same information as the number $p_1 \cdot \ldots \cdot p_r$. Hopefully this justifies our terminology. We shall sometimes write $G \sim p_1 \cdot \ldots \cdot p_r$. As mentioned in the proof of the proposition, a somewhat degenerate case is $G = \mathbb{Z}$, which is indeed number-like but has no prime factors. We shall then write $G \sim 1$. Notice that $\mathbb{Z}$ is the only number-like $G$ that does not have any prime factors.

We conclude this section with a very simple observation. There are two manipulations that can be performed on \eqref{eq:dir_lim_Z} which clearly do not alter its limit $G$. The first one is just removing a finite number of terms from the beginning of the sequence. The second consists in organizing the arrows in \eqref{eq:dir_lim_Z} into groups of $k_1,k_2,\ldots$ consecutive arrows and replacing each group with a single arrow which is the composition of all the arrows in the group, obtaining the direct sequence \[\xymatrix@=3cm{\mathbb{Z} \ar[r]^{\cdot m_1 \cdot \ldots \cdot m_{k_1}} & \mathbb{Z} \ar[r]^{\cdot m_{k_1+1} \cdot \ldots \cdot m_{k_1 + k_2}} & \mathbb{Z} \ar[r] & \ldots }\] These two operations allow to put sequence \eqref{eq:dir_lim_Z} in a somewhat canonical form that is sometimes convenient:

\begin{lemma} \label{lem:canonical} Given any group $G$ represented as the direct limit of \eqref{eq:dir_lim_Z} we may assume that the $\{m_j\}$ are in one of the mutually exclusive forms:
\begin{itemize}
	\item[(i)] All the $m_j = 0$, in which case $G = 0$.
	\item[(ii)] All the $m_j = 1$, in which case $G = \mathbb{Z}$.
	\item[(iii)] All the $m_j \geq 2$, in which case $G$ is not finitely generated.
\end{itemize}
\end{lemma}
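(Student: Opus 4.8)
The plan is to use the two limit-preserving manipulations described just before the statement (deleting an initial segment and composing consecutive arrows) to massage an arbitrary sequence $\{m_j\}$ into one of the three canonical forms, guided by the trichotomy (i)--(iii) for $G$ that was already recorded at the start of the section. The key observation is that the two operations never change the limit $G$, so it suffices to show that, after applying them suitably, the resulting sequence falls into exactly one of the three listed patterns and that the pattern matches the stated value of $G$.

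First I would split into the three cases according to the behaviour of the sequence $\{m_j\}$. In case (i), suppose infinitely many $m_j$ vanish; then by composing arrows in blocks, each block chosen to contain at least one vanishing $m_j$, every composite arrow is multiplication by $0$, so after regrouping all the $m_j$ are $0$ and $G = 0$. In case (ii), suppose only finitely many $m_j$ vanish and $m_j = 1$ from some index onward; deleting the initial segment leaves all $m_j = 1$, and the limit of the constant sequence of identities is $\mathbb{Z}$. In case (iii), the remaining situation, we may first delete an initial segment so that no $m_j$ vanishes (using that only finitely many are zero, since we are not in case (i)), and so that we are not in case (ii), i.e. infinitely many $m_j \geq 2$. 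Then composing arrows in blocks so that each block contains at least one factor $\geq 2$ produces composite multipliers that are all $\geq 2$; grouping also absorbs any intervening factors equal to $1$, so after regrouping every $m_j \geq 2$. That $G$ is not finitely generated in this case is exactly alternative (iii) of the trichotomy stated at the opening of the section, which I may invoke directly.

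The main thing to be careful about is the regrouping in case (iii): I must ensure that after deleting the leading zeros there are still infinitely many indices with $m_j \geq 2$ so that the blocks can be chosen to each swallow at least one such factor, and that no block is forced to consist only of $1$'s (which would leave a composite multiplier equal to $1$ and spoil the conclusion). This is guaranteed precisely because we have excluded cases (i) and (ii): outside case (i) only finitely many $m_j$ vanish, and outside case (ii) the $m_j$ cannot all eventually equal $1$, so infinitely many satisfy $m_j \geq 2$ and the greedy blocking is possible. With this bookkeeping in place the three cases are mutually exclusive and exhaustive, and the asserted identifications of $G$ follow, completing the proof.
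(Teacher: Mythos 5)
Your proposal is correct and follows essentially the same route as the paper: the same three-case split, the same two limit-preserving operations (deleting an initial segment and grouping consecutive arrows so each block absorbs a zero, respectively a factor $\geq 2$), and the same deferral of the ``not finitely generated'' claim in case (iii) to the previously recorded trichotomy (the paper cites the argument of \cite[Proposition 1.6]{hecyo1} for exactly this point). Your extra bookkeeping in case (iii) just makes explicit what the paper leaves implicit, so nothing further is needed.
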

\begin{proof} (i) If infinitely many of the $m_j$ are zero, then by grouping the arrows in such a way that each group contains at least one zero arrow we may assume that $m_j = 0$ for all $j$. Clearly then $G = 0$.

(ii) If (i) does not hold, then only finitely many of the $m_j$ are zero. We may discard them and assume directly that $m_j \geq 1$ for every $j$. If only finitely many of the $m_j$ satisfy $m_j \geq 2$ then we may also discard them and obtain $m_j = 1$ for every $j$. Clearly then $G = \mathbb{Z}$.

(iii) If (i) and (ii) do not hold, then infinitely many of the $m_j$ satisfy $m_j \geq 2$ (and $m_j \geq 1$ for every $j$). Then by grouping the arrows in \eqref{eq:dir_lim_Z} in such a way that each group contains at least one element $\geq 2$ we may assume that $m_j \geq 2$ for every $j$. It is then very easy to see that $G$ is not finitely generated (the argument is essentially the same as in \cite[Proposition 1.6]{hecyo1}).
\end{proof}

\section{The self-index of a toroidal set} \label{sec:defn}

Let $K$ be a toroidal set and let $\{T_j\}$ be a standard basis for $K$. Denote by $N_j$ the geometric index of $T_{j+1}$ inside $T_j$ and consider the direct sequence \[ \mathcal{N}\{T_j\} : \xymatrix{\mathbb{Z} \ar[r]^{\cdot N_1} & \mathbb{Z} \ar[r] & \ldots \ar[r] & \mathbb{Z} \ar[r]^{\cdot N_j} & \mathbb{Z} \ar[r] & \dots} \]

\begin{proposition} \label{prop:indep} The direct limit of $\mathcal{N}\{T_j\}$ is independent of the basis $\{T_j\}$ chosen to compute it.
\end{proposition}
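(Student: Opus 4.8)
The plan is to show that any two standard bases for $K$ produce isomorphic direct limits, by interleaving them into a common standard basis and exploiting the multiplicativity of the geometric index together with the elementary fact that a direct limit is unchanged when one passes to a cofinal subsequence.

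First I would isolate the invariance principle that does most of the work. Given a direct sequence of the shape \eqref{eq:dir_lim_Z} and any increasing sequence of indices $j_1 < j_2 < \cdots$, the direct limit is canonically isomorphic to the direct limit of the \emph{telescoped} sequence whose $i$th arrow is the composition of all the arrows of the original sequence running from the $j_i$th copy of $\mathbb{Z}$ to the $j_{i+1}$th copy; this is the standard statement that a cofinal subsequence computes the same direct limit. Applying this to $\mathcal{N}\{T_j\}$ and invoking the multiplicativity $N(T_{j_{i+1}},T_{j_i}) = \prod_{l=j_i}^{j_{i+1}-1} N(T_{l+1},T_l)$ of the geometric index, the $i$th telescoped arrow becomes multiplication by $N(T_{j_{i+1}},T_{j_i})$, that is, exactly the geometric index of the consecutive tori of the subsequence $\{T_{j_i}\}$. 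In short, replacing a standard basis $\{T_j\}$ by any subsequence $\{T_{j_i}\}$ does not alter the direct limit of the associated geometric-index sequence.

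Next I would perform the interleaving. Let $\{T_j\}$ and $\{T'_k\}$ be two standard bases for $K$. Since both are neighbourhood bases of $K$, I can choose indices recursively so as to obtain a nested chain
\[ T_{j_1} \supseteq T'_{k_1} \supseteq T_{j_2} \supseteq T'_{k_2} \supseteq \cdots, \]
each term lying in the interior of the previous one: having chosen $T_{j_i}$, pick $T'_{k_i} \subseteq {\rm int}\ T_{j_i}$, then pick $T_{j_{i+1}} \subseteq {\rm int}\ T'_{k_i}$, and so on. Relabelling this chain as $S_1 \supseteq S_2 \supseteq \cdots$, with $S_{2i-1}=T_{j_i}$ and $S_{2i}=T'_{k_i}$, yields a sequence of nested polyhedral solid tori whose intersection is again $K$, so $\{S_i\}$ is a standard basis for $K$. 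Its odd-indexed terms form the subsequence $\{T_{j_i}\}$ of $\{T_j\}$ and its even-indexed terms form the subsequence $\{T'_{k_i}\}$ of $\{T'_k\}$.

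Finally I would assemble the conclusion. By the invariance principle of the second step, the direct limit of $\mathcal{N}\{T_j\}$ equals that of its subsequence $\{T_{j_i}\}$; and again by that principle, applied now to $\mathcal{N}\{S_i\}$ and its cofinal odd-indexed subsequence (whose telescoped arrows are $N(S_{2i+1},S_{2i-1}) = N(T_{j_{i+1}},T_{j_i})$ by multiplicativity), this in turn equals the direct limit of $\mathcal{N}\{S_i\}$. The same reasoning applied to $\{T'_k\}$ and the even-indexed terms of $\{S_i\}$ shows that the direct limit of $\mathcal{N}\{T'_k\}$ likewise equals that of $\mathcal{N}\{S_i\}$. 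Hence the two original direct limits are isomorphic, as claimed. I expect the only delicate point to be the bookkeeping in the interleaving step — checking that both extracted subsequences are genuinely cofinal and that the interleaved chain still shrinks to $K$ — while the essential mechanism is just multiplicativity of the geometric index combined with the cofinality invariance of direct limits.
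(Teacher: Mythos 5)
Your proof is correct and is essentially the paper's own argument: the paper likewise first proves subsequence invariance via multiplicativity (removing initial terms and grouping arrows), then interleaves the two bases into a chain $T_j \supseteq T'_j \supseteq T_{j+1}$ and recovers both $\mathcal{N}\{T_j\}$ and $\mathcal{N}\{T'_j\}$ by grouping the arrows $M_j = N(T'_j,T_j)$, $M'_j = N(T_{j+1},T'_j)$ of the interleaved index sequence in two different ways. Your packaging of the interleaved chain as a single standard basis $\{S_i\}$ with two cofinal telescoped subsequences is just a mild rephrasing of that same mechanism.
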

\begin{proof} First let us show that replacing $\{T_j\}$ with a subsequence $\{T_{j_k}\}$ leaves the direct limit of $\mathcal{N}\{T_j\}$ unchanged. The multiplicativity of the geometric index ensures that \[N(T_{j_2},T_{j_1}) = N(T_{j_2},T_{j_2-1}) \cdot \ldots \cdot N(T_{j_1+1},T_{j_1}) = N_{j_2-1} \cdot \ldots \cdot N_{j_1}\] and similarly for the following $j_k$. Thus the sequence $\mathcal{N}\{T_{j_k}\}$ associated to $\{T_{j_k}\}$ is precisely \[\mathcal{N}\{T_{j_k}\} : \xymatrix@=3cm{\mathbb{Z} \ar[r]^{\cdot N_{j_1} \cdot \ldots \cdot N_{j_2-1}} & \mathbb{Z} \ar[r]^{\cdot N_{j_2} \cdot \ldots \cdot N_{j_3-1}} & \mathbb{Z} \ar[r] & \ldots }\] This can be obtained from the original $\mathcal{N}\{T_j\}$ by removing the first $j_1-1$ terms and then grouping the remaining arrows into the blocks $N_{j_k} \cdot \ldots \cdot N_{j_{k+1}-1}$. These two manipulations do not alter the direct limit of the original sequence, and so $\mathcal{N}\{T_j\}$ and $\mathcal{N}\{T_{j_k}\}$ have the same direct limit, as claimed above.

Now let $\{T'_j\}$ be another standard basis for $K$. We want to show that the direct limits of $\mathcal{N}\{T_j\}$ and $\mathcal{N}\{T'_j\}$ coincide. After passing to suitable subsequences of $\{T_j\}$ and $\{T'_j\}$ we may assume that $T_j \supseteq T'_j \supseteq T_{j+1}$ for every $j$. By the previous paragraph there is no loss in generality in doing this. As before we use the notation $N_j = N(T_{j+1},T_j)$ and $N'_j = N(T'_{j+1},T'_j)$. Also, set \[M_j = N(T'_j,T_j) \quad \quad \text{and} \quad \quad M'_j = N(T_{j+1},T'_j).\] The multiplicative property of the geometric index ensures that $N_j = M_j M'_j$ and $N'_j = M'_j M_{j+1}$. Consider the direct sequence \[\xymatrix{\mathbb{Z} \ar[r]^{\cdot M_1} & \mathbb{Z} \ar[r]^{\cdot M'_1} & \mathbb{Z} \ar[r] & \ldots \ar[r] & \mathbb{Z} \ar[r]^{\cdot M_j} & \mathbb{Z} \ar[r]^{\cdot M'_j} & \mathbb{Z} \ar[r] & \dots} \] Grouping the arrows in pairs yields the sequence \[\xymatrix{\mathbb{Z} \ar[r]^{\cdot N_1} & \mathbb{Z} \ar[r] & \ldots \ar[r] & \mathbb{Z} \ar[r]^{\cdot N_j} & \mathbb{Z} \ar[r] & \dots}\] which is precisely $\mathcal{N}\{T_j\}$, whereas discarding the first arrow and grouping the remaining ones in pairs yields the sequence \[\xymatrix{\mathbb{Z} \ar[r]^{\cdot N'_1} & \mathbb{Z} \ar[r] & \ldots \ar[r] & \mathbb{Z} \ar[r]^{\cdot N'_j} & \mathbb{Z} \ar[r] & \dots}\] which is precisely $\mathcal{N}\{T'_j\}$. It follows that the direct limit of all three sequences is the same, and so in particular $\mathcal{N}\{T_j\}$ and $\mathcal{N}\{T'_j\}$ have the same direct limit, as was to be shown.
\end{proof}

The proposition justifies the correctness of the following definition:

\begin{definition} \label{def:selfindex} Given a toroidal set $K$, its \emph{self-geometric index} (or \emph{self-index} for brevity) is the direct limit of the direct sequence $\mathcal{N}\{T_j\}$ for any standard basis $\{T_j\}$ of $K$. We denote the self-index by $\mathcal{N}(K)$.
\end{definition}

Since a toroidal set is not cellular by definition, it follows from Proposition \ref{prop:cellular} that $N_j \geq 1$ for sufficiently large $j$. Thus $\mathcal{N}(K) \neq 0$ and $\mathcal{N}(K)$ is either $\mathbb{Z}$ or not finitely generated. The case $\mathcal{N}(K) = \mathbb{Z}$, or $\mathcal{N}(K) \sim 1$ in our notational convention, is interesting enough to single out:

\begin{proposition} \label{prop:N1} Let $K \subseteq \mathbb{R}^3$ be a toroidal set. The following three statements are equivalent:
\begin{itemize}
	\item[(i)] $\mathcal{N}(K) \sim 1$.
	\item[(ij)] $N(T_{j+1},T_j) = 1$ for large enough $j$ for some standard basis $\{T_j\}$ for $K$.
	\item[(iii)] $N(T_{j+1},T_j) = 1$ for large enough $j$ for any standard basis $\{T_j\}$ for $K$.
\end{itemize}

Moreover, if any of these conditions holds then $\check{H}^1(K;\mathbb{Z}) = \mathbb{Z}$.
\end{proposition}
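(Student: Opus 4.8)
The plan is to prove the cycle of implications (i) $\Rightarrow$ (iii) $\Rightarrow$ (ij) $\Rightarrow$ (i), all of which follow almost immediately from the algebraic trichotomy recorded right after \eqref{eq:dir_lim_Z} together with the basis-independence established in Proposition \ref{prop:indep}. The relevant input is that the direct limit of \eqref{eq:dir_lim_Z} equals $\mathbb{Z}$ precisely when $m_j = 1$ from some index onward. For (i) $\Rightarrow$ (iii) I would fix an arbitrary standard basis $\{T_j\}$ and note that, by Proposition \ref{prop:indep}, the direct limit of $\mathcal{N}\{T_j\}$ equals $\mathcal{N}(K) = \mathbb{Z}$. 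Applying the trichotomy to $\mathcal{N}\{T_j\}$, whose $j$-th arrow is multiplication by $N_j = N(T_{j+1},T_j)$, forces $N_j = 1$ for large $j$, which is exactly (iii). The implication (iii) $\Rightarrow$ (ij) is immediate, since a standard basis exists. Finally (ij) $\Rightarrow$ (i) is the same trichotomy read in the opposite direction: if $N_j = 1$ eventually for one basis, then the limit of $\mathcal{N}\{T_j\}$ is $\mathbb{Z}$, and this limit is by definition $\mathcal{N}(K)$, so $\mathcal{N}(K) \sim 1$.

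For the \emph{moreover} part the key input is the relation between the geometric index and the winding number noted in Section \ref{sec:prelim}: if $w_j$ denotes the winding number of $T_{j+1}$ inside $T_j$, then $w_j \leq N_j$ and $w_j \equiv N_j \pmod 2$. Assuming any of the equivalent conditions holds, I would pick a basis with $N_j = 1$ for large $j$. Then $w_j$ is a nonnegative integer that is at most $1$ and has the same parity as $1$, hence is odd; the only possibility is $w_j = 1$. Since $\check{H}^1(K;\mathbb{Z})$ is the direct limit of the sequence whose arrows are multiplication by the $w_j$ (by the continuity property of \v{C}ech cohomology, computed with this same basis), alternative (ii) of the cohomology trichotomy in Section \ref{sec:prelim} yields $\check{H}^1(K;\mathbb{Z}) = \mathbb{Z}$.

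The whole argument is essentially bookkeeping, and the only step carrying any content is the parity observation. I expect this to be the crux, slight as it is: it is precisely the point that rules out the a priori possibility $w_j = 0$ and pins down $w_j = 1$, thereby transferring the information from the finer geometric index back to the \v{C}ech cohomology. Everything else reduces to quoting the direct-limit trichotomy and Proposition \ref{prop:indep}.
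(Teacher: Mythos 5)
Your proof is correct and follows essentially the same route as the paper's: the equivalence of (i)--(iii) via the direct-limit trichotomy combined with Proposition \ref{prop:indep}, and the \emph{moreover} part via the parity and inequality constraints $w_j \equiv N_j \pmod 2$, $w_j \leq N_j$ forcing $w_j = 1$. The paper merely compresses the three implications into one sentence, but the ingredients are identical.
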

\begin{proof} Notice that $\mathcal{N}(K) = \mathbb{Z}$ occurs precisely when $N_j = N(T_{j+1},T_j) = 1$ for large $j$, so this condition must be independent of the basis $\{T_j\}$ since the same is true of $\mathcal{N}(K)$ by Proposition \ref{prop:indep}. This establishes the equivalence of (i), (ii) and (iii). Denote by $w_j$ the winding number of $T_{j+1}$ inside $T_j$. Recall that $w_j$ and $N_j$ have the same parity and also $w_j \leq N_j$. These two conditions (together with $N_j = 1$) force $w_j = 1$, and so $\check{H}^1(K;\mathbb{Z}) = \mathbb{Z}$.
\end{proof}

Mimicking the definitions of the previous section, we say that a prime $p$ divides $\mathcal{N}(K)$ if every element in $\mathcal{N}(K)$ is divisible by $p$ or, more operationally and as a consequence of Proposition \ref{prop:divisors}, if $p|N(T_{j+1},T_j)$ for infinitely many $j$.

\begin{example} \label{ex:gen_sol} Let $K$ be a generalized solenoid defined as the intersection of nested solid tori $\{T_j\}$ such that each $T_{j+1}$ winds monotonically (that is, without turning back) $n_j$ times inside the previous $T_j$ and the diameters of the $T_j$ tend to zero as $j$ increases. To avoid degenerate cases we require $n_j > 1$ for each $j$. Clearly $N(T_{j+1},T_j) = n_j$ for every $j$, and so $\mathcal{N}(K) \not\sim 1$ and $p | \mathcal{N}(K)$ if and only if $p$ is a prime factor of infinitely many of the $n_j$. Depending on our choice of the $n_j$ we may find several scenarios:

(i) If $K$ is an $n$--adic solenoid ($n_j = n$ for every $j$) then $\mathcal{N}(K)$ is number-like and the prime divisors of $\mathcal{N}(K)$ are precisely the prime divisors of $n$ (in the ordinary sense of elementary arithmetics).

(ii) One may pick the $n_j$ in such a way that each prime $p$ appears as a factor of infinitely many of them (for instance, by taking $n_j = j!$). Then every prime divides $\mathcal{N}(K)$.

(iii) At the other extreme, one may choose the $n_j$ in such a way that no prime $p$ is a factor of infinitely many of them, for instance by letting all the $n_j$ be pairwise prime to each other. Then $\mathcal{N}(K)$ has no prime divisors.
\end{example}

The status of the prime $p = 2$ as a factor of $\mathcal{N}(K)$ is somewhat special. The following holds:

\begin{proposition} \label{prop:2} The prime $p = 2$ is a factor of $\mathcal{N}(K)$ if and only if every element in $\check{H}^1(K;\mathbb{Z})$ is divisible by $2$. In particular, having $p = 2$ as a prime factor of $\mathcal{N}(K)$ is a topological property of toroidal sets.
\end{proposition}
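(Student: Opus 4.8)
The plan is to reduce both sides of the claimed equivalence to a single statement about the parity of the geometric indices $N_j = N(T_{j+1},T_j)$ along a fixed standard basis $\{T_j\}$ of $K$, and then to invoke Proposition \ref{prop:divisors} twice. So I would fix such a basis and let $w_j$ denote the winding number of $T_{j+1}$ inside $T_j$.

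The crucial input is the elementary fact recorded in Section \ref{sec:prelim}: for each $j$ the winding number $w_j$ and the geometric index $N_j$ have the same parity. Consequently $2 \mid N_j$ if and only if $2 \mid w_j$ for every single $j$, and therefore $2$ divides infinitely many of the $N_j$ if and only if it divides infinitely many of the $w_j$. This parity relation is the only ingredient genuinely special to the prime $2$; it is precisely what fails for odd primes (where $w_j \leq N_j$ imposes no divisibility constraint), and it is what makes the statement a phenomenon peculiar to $p=2$.

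Next I would translate each side of the proposition into one of these ``infinitely many even terms'' conditions. On one hand, by the operational characterization of the prime divisors of $\mathcal{N}(K)$ (the remark following Example \ref{ex:gen_sol}, itself a direct application of Proposition \ref{prop:divisors} to the sequence $\mathcal{N}\{T_j\}$), one has $2 \mid \mathcal{N}(K)$ if and only if $2 \mid N_j$ for infinitely many $j$. On the other hand, $\check{H}^1(K;\mathbb{Z})$ is, by continuity of \v{C}ech cohomology, the direct limit of the sequence whose arrows are multiplication by the $w_j$; applying Proposition \ref{prop:divisors} to that sequence, every element of $\check{H}^1(K;\mathbb{Z})$ is divisible by $2$ if and only if $2 \mid w_j$ for infinitely many $j$. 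Chaining these two equivalences through the parity observation of the previous paragraph yields the stated equivalence.

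The concluding ``topological property'' assertion is then immediate: $\check{H}^1(K;\mathbb{Z})$ is a topological (indeed shape) invariant of $K$, and divisibility of all its elements by $2$ is an intrinsic property of that group, so the equivalent condition $2 \mid \mathcal{N}(K)$ depends only on the topology of $K$. I do not expect any real obstacle; the argument is short bookkeeping, and the only point deserving care is the degenerate case in which infinitely many $w_j$ vanish, so that $\check{H}^1(K;\mathbb{Z}) = 0$. There the divisibility condition holds vacuously, and correspondingly the matching $N_j$ are even: being nonzero for large $j$ by non-cellularity (Proposition \ref{prop:cellular}) and of the same parity as $0$, they are at least $2$, so both sides of the equivalence remain in agreement.
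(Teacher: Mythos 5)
Your proof is correct and is essentially identical to the paper's: both apply Proposition \ref{prop:divisors} to the two direct sequences (windings $w_j$ for $\check{H}^1(K;\mathbb{Z})$, indices $N_j$ for $\mathcal{N}(K)$) and chain the resulting characterizations through the parity coincidence of $w_j$ and $N_j$. Your closing discussion of the degenerate case $\check{H}^1(K;\mathbb{Z}) = 0$ is harmless but not needed, since $0$ is even and the general argument already covers it.
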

\begin{proof} Let $\{T_j\}$ be a standard basis for $K$ and denote by $w_j$ and $N_j$ the winding number and the geometric index of each $T_{j+1}$ inside $T_j$ respectively. By the continuity property of \v{C}ech cohomology $\check{H}^1(K;\mathbb{Z})$ is the direct limit of the direct sequence \[ \xymatrix{\mathbb{Z} \ar[r]^{\cdot w_1} & \mathbb{Z} \ar[r]^{\cdot w_2} & \ldots \ar[r] & \mathbb{Z} \ar[r]^{\cdot w_j} & \mathbb{Z} \ar[r] & \ldots } \] whereas $\mathcal{N}(K)$ is by definition the direct limit of \[ \xymatrix{\mathbb{Z} \ar[r]^{\cdot N_1} & \mathbb{Z} \ar[r]^{\cdot N_2} & \ldots \ar[r] & \mathbb{Z} \ar[r]^{\cdot N_j} & \mathbb{Z} \ar[r] & \ldots } \] Since the geometric index and the winding number both have the same parity, $p = 2$ divides infinitely many of the $w_j$ if and only if it divides infinitely many of the $N_j$. The result follows from the characterization of Proposition \ref{prop:divisors}.
\end{proof}

It seems reasonable from its definition that the self-index of a toroidal set should be invariant under (local) ambient homeomorphisms. This is indeed true, although there is a slight technical subtlety: since Definition \ref{def:selfindex} involves neighbourhood bases that consist of polyhedral tori (for the geometric index to be defined), it is clear that the self-index is invariant under piecewise linear homeomorphisms, but not necessarily under arbitrary ones. To prove this in general we will need to make use of the deep result that any homeomorphism of a $3$--manifold can be approximated arbitrarily closely by a piecewise linear one.

\begin{proposition} \label{prop:local} Let $K$ and $K'$ be toroidal sets in $\mathbb{R}^3$. Suppose that $f : O \longrightarrow f(O)$ is a homeomorphism defined on an open neighbourhood $O$ of $K$ and $f(K) = K'$. Then $\mathcal{N}(K)$ and $\mathcal{N}(K')$ are equal.
\end{proposition}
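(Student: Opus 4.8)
The plan is to reduce everything to the piecewise linear case, where invariance of the geometric index is essentially built into Lemma~\ref{lem:rel_hom}, and to bridge the gap caused by $f$ not being piecewise linear by means of the approximation theorem for homeomorphisms of $3$--manifolds mentioned above. First I would fix a standard basis $\{T_j\}$ for $K$ and, discarding finitely many terms (which alters neither $\mathcal{N}(K)$, by Proposition~\ref{prop:indep}, nor the hypotheses), arrange that $T_1 \subseteq O$. Since $f$ is a homeomorphism onto its image, $\{f(T_j)\}$ is a neighbourhood basis of $K' = f(K)$ consisting of solid tori; the only defect is that these tori need not be polyhedral, so the geometric index is not yet defined for them. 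The goal is therefore to manufacture a genuine \emph{polyhedral} standard basis $\{S_k\}$ for $K'$ whose associated direct sequence $\mathcal{N}\{S_k\}$ coincides, after the subsequence and grouping manipulations that leave direct limits unchanged, with $\mathcal{N}\{T_j\}$; by Proposition~\ref{prop:indep} this yields $\mathcal{N}(K') = \mathcal{N}(K)$.

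The heart of the argument is a pairwise comparison. Fix $j$ and let $A_j = T_j - \operatorname{int} T_{j+1}$ be the polyhedral shell between consecutive tori. Using the approximation theorem I would approximate $f$ on a neighbourhood of $A_j$ by a piecewise linear homeomorphism $\phi_j$ so close to $f$ that $\phi_j(\partial T_j)$ and $\phi_j(\partial T_{j+1})$ bound polyhedral solid tori $P_j^{\mathrm{out}} \supseteq P_j^{\mathrm{in}}$ which are parallel to, and tightly sandwich, $f(T_j)$ and $f(T_{j+1})$. I then build a homeomorphism of pairs $F_j : (T_j,T_{j+1}) \longrightarrow (P_j^{\mathrm{out}}, P_j^{\mathrm{in}})$ by setting $F_j = \phi_j$ on the shell $A_j$, where it is piecewise linear, and extending it across $T_{j+1}$ to any homeomorphism onto $P_j^{\mathrm{in}}$ agreeing with $\phi_j$ on $\partial T_{j+1}$. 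Such an extension exists because, $\phi_j$ being close to $f$ and $f$ carrying meridians of $T_{j+1}$ to meridians of $f(T_{j+1})$, the boundary homeomorphism $\phi_j|_{\partial T_{j+1}}$ sends meridians to meridians. Since $F_j$ is a homeomorphism between polyhedral pairs that is piecewise linear on $T_j - \operatorname{int} T_{j+1}$, Lemma~\ref{lem:rel_hom} gives $N(P_j^{\mathrm{in}}, P_j^{\mathrm{out}}) = N(T_{j+1},T_j) = N_j$.

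It remains to glue these pairs into one nested family. Choosing the approximations $\phi_j$ successively finer, I would arrange that the polyhedral tori interleave, $P_1^{\mathrm{out}} \supseteq P_1^{\mathrm{in}} \supseteq P_2^{\mathrm{out}} \supseteq P_2^{\mathrm{in}} \supseteq \cdots$, with each $P_j^{\mathrm{in}}$ and $P_{j+1}^{\mathrm{out}}$ squeezed around $f(T_{j+1})$. Because the $P_j^{\bullet}$ stay within arbitrarily small distance of the $f(T_j)$, which shrink to $K'$, one gets $\bigcap_k P_k^{\bullet} = \bigcap_j f(T_j) = K'$, so relabelling the $P_j^{\bullet}$ as $\{S_k\}$ furnishes a polyhedral standard basis for $K'$. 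By construction the geometric indices of consecutive members of $\{S_k\}$ are, up to grouping of arrows, exactly the $N_j$, so $\mathcal{N}\{S_k\}$ and $\mathcal{N}\{T_j\}$ have the same direct limit and $\mathcal{N}(K') = \mathcal{N}(K)$.

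The step I expect to be the main obstacle is precisely the simultaneous control of the approximations in the previous paragraph: one must invoke the deep piecewise linear approximation theorem so that each $\phi_j$ is at once piecewise linear on the shell $A_j$ (to feed Lemma~\ref{lem:rel_hom}), meridian-preserving on $\partial T_{j+1}$ (so that $F_j$ extends to a homeomorphism of pairs), and fine enough that the resulting polyhedral tori nest consistently and collapse onto $K'$ rather than onto some nearby, incorrect limit set. Getting the sidedness of the nesting right, while keeping all three requirements met by a single choice of approximation at each stage, is the delicate part; everything else is bookkeeping with direct limits already justified in Section~\ref{sec:algebra} and Proposition~\ref{prop:indep}.
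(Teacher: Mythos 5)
Your overall strategy---reduce to the piecewise linear setting, compare pairs of tori via Lemma~\ref{lem:rel_hom}, and conclude with Proposition~\ref{prop:indep}---is the same as the paper's, but the shell-by-shell gluing at the heart of your argument has a genuine gap, in fact two related ones. First, the approximation theorem produces a PL homeomorphism $\phi_j$ that is \emph{pointwise} close to $f$, but pointwise closeness says nothing about sidedness: $\phi_j(\partial T_{j+1})$ is a PL torus near the topological torus $f(\partial T_{j+1})$, which may be wildly embedded (since $f$ is only a homeomorphism), and it can perfectly well intersect $f(\partial T_{j+1})$. Hence neither the inclusion $f(T_{j+1}) \subseteq P_j^{\mathrm{in}}$ (which your meridian-to-meridian extension argument tacitly uses) nor the nesting $P_{j+1}^{\mathrm{out}} \subseteq P_j^{\mathrm{in}}$ follows from the cited theorem; approximating a possibly wild surface from a prescribed side is a much deeper matter. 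Second, and more fatally, even granting the nesting, your interleaved basis has index sequence $N_1, m_1, N_2, m_2, \ldots$ where $m_j = N(P_{j+1}^{\mathrm{out}}, P_j^{\mathrm{in}})$, and the claim that this reproduces the $N_j$ ``up to grouping'' is incorrect reasoning: grouping arrows only multiplies consecutive indices together, it can never delete the junction factors $m_j$. If, say, all $N_j = 1$ but all $m_j = 2$, the two direct limits are $\mathbb{Z}$ and the dyadic rationals. So you would need to prove $m_j = 1$, i.e., that two PL tori squeezed around the possibly wild torus $f(T_{j+1})$ from opposite sides are parallel---precisely the kind of tameness statement the proposition cannot assume and that your construction gives no means to verify.

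The paper sidesteps both problems with a single device that your proposal is missing: apply Moise's approximation theorem \cite{moise2} \emph{once}, to $f$ restricted to the open $3$--manifold $O - K$, taking as the allowed error the function $\phi(p) = {\rm d}(p, K)$, which is bounded away from zero on every compact subset of $O - K$. The resulting PL homeomorphism $g : O - K \longrightarrow O' - K'$ extends over $K$ by $f$ itself to a homeomorphism of $O$ onto $O'$: continuity of the extension is forced because the allowed error tends to zero near $K$, and invariance of domain does the rest. Then $T'_j := g(T_j)$ is automatically a nested polyhedral standard basis for $K'$, and $g$ itself is a homeomorphism of pairs $(T_j, T_{j+1}) \longrightarrow (T'_j, T'_{j+1})$ that is PL on each shell, so Lemma~\ref{lem:rel_hom} applies directly and Proposition~\ref{prop:indep} finishes the proof. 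In effect, all your $\phi_j$ become restrictions of one global $g$, whence $P_j^{\mathrm{in}} = g(T_{j+1}) = P_{j+1}^{\mathrm{out}}$ and the junction and sidedness problems vanish. As written, your proposal correctly names the delicate step but does not supply the idea that resolves it.
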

\begin{proof} By the theorem on invariance of domain $f$ is an open map and $O' := f(O)$ is a neighbourhood of $K'$. Consider the open $3$--manifolds $O-K$ and $O' - K'$. These are homeomorphic via $f$. Let $\phi : O - K \longrightarrow (0,+\infty)$ be defined by $\phi(p) := {\rm d}(p,K)$, where ${\rm d}$ denotes the usual distance. Clearly $\phi$ is bounded away from zero on every compact subset of $O - K$, and then \cite[Theorem 1, p. 253]{moise2} guarantees that there exists a piecewise linear homeomorphism $g : O - K \longrightarrow O' - K'$ such that ${\rm d}(f(p),g(p)) < \phi(p)$ for every $p \in O - K$. Extend $g$ to all of $O$ by defining $g(p) := f(p)$ for $p \in K$. It is straightforward to check that this extension provides a continuous bijection from $O$ to $O'$, which is therefore a homeomorphism by the theorem on invariance of domain.

Now let $\{T_j\}$ be a standard basis for $K$ with all $T_j$ contained in $O$. Define $T'_j := g(T_j)$. These clearly form a standard basis for $K'$ because $g$ is a homeomorphism. Moreover, by construction $g$ provides homeomorphisms of pairs $g : (T_j,T_{j+1}) \longrightarrow (T'_j,T'_{j+1})$ which are piecewise linear on $T_j - {\rm int}\ T_{j+1}$. Thus by Lemma \ref{lem:rel_hom} the geometric indices $N(T_{j+1},T_j)$ and $N(T'_{j+1},T'_j)$ are equal. The equality $\mathcal{N}(K) = \mathcal{N}(K')$ then follows from Proposition \ref{prop:indep}.
\end{proof}

There is no reason to expect that two homeomorphic (but not ambient homeomorphic) toroidal sets should have the same self-index; that is, that the self-index should be a topological invariant of toroidal sets. However, we do not know how to construct an example that illustrates this. Also, Proposition \ref{prop:2} does show that having $2$ as a prime factor of the self-index is indeed a topological invariant. This prompts the question of whether the self-index enjoys some subtler sort of topological invariance property, perhaps involving only its prime factors or its number-like nature. The case of generalized solenoids is somewhat exceptional in this regard for, as it turns out, generalized solenoids are completely characterized by their self-index:

\begin{proposition} \label{prop:charac_gsol} Let $K$ and $K'$ be two generalized solenoids. Then $K$ is homeomorphic to $K'$ if and only if their self-indices $\mathcal{N}(K)$ and $\mathcal{N}(K')$ are isomorphic (as groups).
\end{proposition}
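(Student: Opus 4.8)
The plan is to reduce the statement to the classical homeomorphism classification of solenoids, after first identifying the self-index with a cohomological invariant. The key preliminary observation is that for a generalized solenoid the self-index and the first \v{C}ech cohomology group coincide. Writing $\{T_j\}$ for the defining basis and $n_j$ for the number of times $T_{j+1}$ winds monotonically inside $T_j$, the fact that the winding is monotone (without turning back) forces every intersection of a core of $T_{j+1}$ with a meridional disk of $T_j$ to carry the same sign; hence the winding number $w_j$ and the geometric index $N_j$ both equal $n_j$. Consequently the two direct sequences computing $\mathcal{N}(K)$ and $\check{H}^1(K;\mathbb{Z})$ are identical, so that $\mathcal{N}(K) = \check{H}^1(K;\mathbb{Z})$ as groups (compare Example \ref{ex:gen_sol} and the computation of $\check{H}^1$ preceding it).

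The forward implication is then immediate. A homeomorphism $K \to K'$ induces an isomorphism $\check{H}^1(K;\mathbb{Z}) \to \check{H}^1(K';\mathbb{Z})$, since \v{C}ech cohomology is a topological invariant, and by the previous paragraph this is the same as an isomorphism $\mathcal{N}(K) \cong \mathcal{N}(K')$.

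For the converse I would first recall that a generalized solenoid $K$, regarded as an abstract space, is homeomorphic to the inverse limit $\varprojlim(\mathbb{S}^1 \xleftarrow{\cdot n_j} \mathbb{S}^1)$: each solid torus $T_j$ retracts onto its core circle, the monotone winding makes the induced bonding maps have degree $n_j$, and $\bigcap_j T_j$ is precisely the inverse limit of this tower of circles. Thus $K$ and $K'$ are honest solenoids in the classical sense, and both $\mathcal{N}(K)$ and $\mathcal{N}(K')$ are rank-one torsion-free abelian groups, i.e.\ subgroups of $\mathbb{Q}$ determined by a Steinitz (supernatural) number built from the $n_j$. By Baer's classification of such groups, $\mathcal{N}(K) \cong \mathcal{N}(K')$ if and only if the two Steinitz numbers agree at all but finitely many primes, and at the exceptional primes both exponents are finite. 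On the other hand, the classical homeomorphism classification of solenoids (due to Bing and McCord) asserts that two solenoids are homeomorphic precisely when their Steinitz numbers satisfy this very same condition. Matching the two equivalence relations yields $\mathcal{N}(K) \cong \mathcal{N}(K') \Rightarrow K \cong K'$.

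The main obstacle is exactly this last step: one must verify that the equivalence relation on Steinitz numbers arising from Baer's isomorphism criterion for subgroups of $\mathbb{Q}$ coincides, prime by prime, with the one governing the homeomorphism type of a solenoid. Both relations amount to \emph{equality of exponents off a finite set of primes, with finiteness at the exceptional primes}, so they do agree; but making this precise, and (if a self-contained account is preferred) reconstructing the solenoid classification rather than quoting it, is where the genuine work lies. Everything else is bookkeeping with the direct limits already handled in Section \ref{sec:algebra}.
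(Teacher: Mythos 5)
Your proposal is correct and follows essentially the same route as the paper: the paper's proof also hinges on the observation that monotone winding forces the geometric index to equal the winding number, so that $\mathcal{N}(K) = \check{H}^1(K;\mathbb{Z})$, and then appeals to the classical fact that solenoids are topologically characterized by their first \v{C}ech cohomology (citing McCord). The only difference is one of packaging: where the paper quotes McCord's cohomological characterization in a single stroke, you unpack that same classical result into the Bing--McCord classification by Steinitz numbers plus Baer's isomorphism criterion for subgroups of $\mathbb{Q}$, and verify that the two equivalence relations on Steinitz numbers coincide --- a correct, slightly more self-contained rendering of the same citation.
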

\begin{proof} Recall that we defined a generalized solenoid as the intersection of a nested sequence of solid tori such that each torus winds monotonically (that is, without doubling back) inside the previous one. The monotonicity condition implies that the geometric index and the winding number of each pair of consecutive tori both coincide. In particular the direct sequences that arise when computing the self-index of a generalized solenoid and its \v{C}ech cohomology in degree one are the same and so their direct limits coincide; that is, $\mathcal{N} = \check{H}^1$. Since generalized solenoids are topologically characterized by their \v{C}ech cohomology (see for instance the argument in \cite[p. 198]{mccord1}), the result follows.
\end{proof}

\section{Weakly tame toroidal sets} \label{sec:wtame}

In this section we focus on weakly tame toroidal sets. Recall that, as mentioned in the Introduction, a compact set $K \subseteq \mathbb{R}^3$ is called \emph{weakly tame} if there exists a compact polyhedron $P \subseteq \mathbb{R}^3$ such that $\mathbb{R}^3 - K$ and $\mathbb{R}^3 - P$ are homeomorphic.

The following criterion allows one to recognize when a toroidal set is weakly tame in terms of any one of its standard neighbourhood bases. We recall that two nested tori $T_2 \subseteq T_1$ are \emph{concentric} if there exists a homeomorphism $T_1 - {\rm int}\ T_2 \cong (\partial T_1) \times [0,1]$ such that $\partial T_1 \ni p \longmapsto (p,0) \in (\partial T_1) \times [0,1]$. 
 
\begin{proposition} \label{prop:charac_wtame} Let $K \subseteq \mathbb{R}^3$ be a toroidal set. Then the following are equivalent:
\begin{itemize}
	\item[(i)] $K$ is weakly tame.
	\item[(ii)] There exists a standard basis $\{T_j\}$ for $K$ such that all the $T_j$ are concentric.
	\item[(iii)] For any standard basis $\{T_j\}$ for $K$, the $T_j$ are concentric for large enough $j$.
\end{itemize}
\end{proposition}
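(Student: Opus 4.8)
The plan is to prove the cycle of implications $(iii)\Rightarrow(ii)\Rightarrow(i)\Rightarrow(iii)$. The implication $(iii)\Rightarrow(ii)$ is immediate: if for the arbitrary basis the tori are concentric for $j\ge j_0$, then the tail $\{T_j\}_{j\ge j_0}$ is itself a standard basis consisting of concentric tori. For $(ii)\Rightarrow(i)$ I would fix a polyhedral core curve $\gamma$ of $T_1$; this is a compact polyhedron (a knot), hence a legitimate candidate for the set $P$ in the definition of weak tameness. Writing $T_1 - K = \bigcup_{j\ge 1}(T_j - \mathrm{int}\,T_{j+1})$, concentricity gives a homeomorphism of each block $T_j - \mathrm{int}\,T_{j+1}$ with $(\partial T_j)\times[0,1]$ rel $\partial T_j$; telescoping these collars (an absorbing-collars argument) yields $T_1 - K \cong T^2\times(0,1]$, the identity on $\partial T_1 = T^2\times\{1\}$. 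On the other hand $T_1 - \gamma \cong T^2\times(0,1]$ rel $\partial T_1$, since a solid torus minus its core is an open collar of its boundary. Gluing these two homeomorphisms to the identity on $\mathbb{R}^3 - \mathrm{int}\,T_1$ produces $\mathbb{R}^3 - K \cong \mathbb{R}^3 - \gamma$, so $K$ is weakly tame.

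The heart of the matter is $(i)\Rightarrow(iii)$, which I would prove directly for an arbitrary standard basis $\{T_j\}$, so that the conclusion is automatically the ``for any basis'' statement. The first step is to collar the end of $\mathbb{R}^3 - K$ at $K$. By hypothesis there is a homeomorphism $h:\mathbb{R}^3 - K \to \mathbb{R}^3 - P$ with $P$ a compact polyhedron. Choosing a regular neighbourhood $R$ of $P$ one has $R - P \cong (\partial R)\times(0,1]$, so the end of $\mathbb{R}^3 - P$ at $P$ is collared; pulling this back through $h$ gives a neighbourhood $V$ of the $K$-end with $V \cong \Sigma\times[0,\infty)$, where $\Sigma$ is the component of $\partial R$ corresponding under $h$ to the $K$-end, a closed connected orientable surface (connected because the $K$-end of $\mathbb{R}^3 - K$ is connected). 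Since $\{T_j - K\}$ is a cofinal system of neighbourhoods of the same end, for all large $j$ we have $\partial T_j \subseteq V$, and by passing deep into the collar we may arrange that $\partial T_j$ lies in the product part.

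The second step is to show that $\partial T_j$ is incompressible in $V$ for large $j$. A meridian of $T_j$ cannot bound a disk in $V\subseteq\mathbb{R}^3 - K$: such a disk would be a meridional disk of $T_j$ missing $K$, which by the argument in Proposition \ref{prop:cellular} would force $K$ to be cellular, contradicting that $K$ is toroidal. The remaining essential curves on $\partial T_j$ have nonzero winding and are homologically nontrivial in $\mathbb{R}^3 - K$, so they cannot bound a disk either; this is the delicate point, and it is where the non-cellularity of $K$ is genuinely used. Granting incompressibility, I would invoke the classical fact (Waldhausen) that a closed incompressible surface in $\Sigma\times I$ is isotopic to a horizontal level $\Sigma\times\{\ast\}$. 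Applied to the torus $\partial T_j$ this forces $\Sigma\cong T^2$ and shows each $\partial T_j$ is isotopic to a level. Finally, since $\partial T_j$ and $\partial T_{j+1}$ are disjoint nested incompressible tori in $T^2\times[0,\infty)$, they are simultaneously isotopic to disjoint levels, so the region $T_j - \mathrm{int}\,T_{j+1}$ between them is homeomorphic to $T^2\times I$; that is, $T_j$ and $T_{j+1}$ are concentric.

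I expect the main obstacle to be the incompressibility step together with the bookkeeping needed to pass from ``each $\partial T_j$ is individually horizontal'' to ``consecutive $\partial T_j,\partial T_{j+1}$ cobound a product region''. One must ensure the isotopies can be taken simultaneously for the disjoint nested tori (parallelism of disjoint incompressible tori in $\Sigma\times I$) and must correctly identify the $K$-facing component of $\partial R$. A secondary subtlety is that the collaring and surface-theoretic arguments implicitly rely on Moise/Bing-type approximation (in the spirit of Lemma \ref{lem:rel_hom}) to keep everything in the piecewise linear category where the geometric index and the classification of incompressible surfaces apply.
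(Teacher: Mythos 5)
Your proposal reorganizes the cycle and, for the hard implication, takes a genuinely different route from the paper. The paper proves (i)$\Rightarrow$(ii) by first extending $h$ to a homeomorphism $\hat{h} : \mathbb{S}^3 - P \to \mathbb{S}^3 - K$ (ruling out the ``inside-out'' matching of complementary components via a pointlike/cellularity argument), then transporting a nested family of regular neighbourhoods $N_j$ of $P$ to neighbourhoods $T_j := \hat{h}(N_j - P) \cup K$, showing by Alexander and Lefschetz duality that these are compact manifolds with sphere or torus boundary, excluding spheres by non-cellularity, and finally obtaining concentricity (and the fact that the $T_j$ are solid tori at all) from Edwards' concentricity theorem; the same theorem of Edwards yields (ii)$\Rightarrow$(iii). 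Your (iii)$\Rightarrow$(ii) is trivially correct, and your (ii)$\Rightarrow$(i) is essentially the paper's telescoping-collars argument. Your (i)$\Rightarrow$(iii) via an end collar, incompressibility of $\partial T_j$, and Waldhausen's classification of incompressible surfaces in $\Sigma \times I$ avoids Edwards' theorem entirely and is a workable alternative in outline --- but, exactly at the point you flagged as delicate, it contains a genuine error.

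The error is the claim that essential non-meridional curves on $\partial T_j$ ``are homologically nontrivial in $\mathbb{R}^3 - K$.'' This is false: the preferred longitude of $T_j$ is by definition null-homologous in $\mathbb{R}^3 - \mathrm{int}\, T_j \subseteq \mathbb{R}^3 - K$ (it bounds a Seifert surface of the core pushed to the boundary); in $H_1(\mathbb{R}^3 - \mathrm{int}\, T_j) \cong \mathbb{Z}$, generated by the meridian $\mu$, a $(p,q)$-curve maps to $(p + qn)\mu$ for the framing offset $n$, which vanishes precisely for the preferred longitude. Worse, in the genus-zero case central to this paper (Whitehead continua, solenoids) each $T_j$ is unknotted and its longitude genuinely bounds an \emph{embedded} disk in $\mathbb{R}^3 - T_j$, so no homology computation in $\mathbb{R}^3 - K$ can exclude longitudinal compressing disks; the location of the disk must be used. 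A repair: run the whole argument inside a shrinking family of collars $V_m := h^{-1}(R_m - P)$ for regular neighbourhoods $R_m \searrow P$, noting (by the paper's argument that $\hat{h}(N - P) \cup K$ is a neighbourhood of $K$) that the sets $V_m \cup K$ form a neighbourhood basis of $K$, so that for each $T_{m'}$ one may choose $V_m \subseteq \mathrm{int}\, T_{m'}$. Then a longitudinal compressing disk $D \subseteq V_m$ for $\partial T_j$ (with interior outside $T_j$) yields a $3$--ball $T_j \cup N(D) \subseteq T_{m'}$, whence $N(T_j, T_{m'}) = 0$ by Schubert's characterization of index zero (quoted after Proposition \ref{prop:cellular}), and $K$ is cellular --- contradiction. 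Your meridian case also needs the interior-of-$D$ dichotomy: if the interior lies inside $T_j$ the disk is a meridional disk missing $K$, giving the same contradiction via Proposition \ref{prop:cellular}; if outside, it is excluded because $\mu$ generates $H_1(\mathbb{R}^3 - \mathrm{int}\, T_j)$. With incompressibility repaired in this localized form, the Waldhausen level/parallelism step does finish the proof, and it also automatically disposes of the end-matching subtlety you mention: if the $K$-end corresponded to the infinity end of $\mathbb{R}^3 - P$ then $\Sigma = S^2$, every torus in $S^2 \times I$ is compressible, and the argument above already rules compressibility out.
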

\begin{proof} (i) $\Rightarrow$ (ii) By assumption there exists a homeomorphism $h : \mathbb{R}^3 - P \longrightarrow \mathbb{R}^3 - K$ for some compact polyhedron $P$. We may assume without loss of generality that $h$ is piecewise linear (\cite[Theorem 2, p. 253]{moise2}). Notice that \[H^q(P;\mathbb{Z}) = \tilde{H}_{2-q}(\mathbb{R}^3 - P;\mathbb{Z}) = \tilde{H}_{2-q}(\mathbb{R}^3 - K;\mathbb{Z}) = \check{H}^q(K;\mathbb{Z})\] where the first and last equalities follow from Alexander duality. This shows (setting $q = 0$) that $P$ must be connected and $\check{H}^q(P;\mathbb{Z}) = 0$ for $q \geq 2$. For $q = 1$ we reason as follows. The only possibilities for $\check{H}^1(K;\mathbb{Z})$ are either $0$, $\mathbb{Z}$, or a non-finitely generated group. Since compact polyhedra have finitely generated cohomology, it follows that $H^1(P;\mathbb{Z})$ is either $0$ or $\mathbb{Z}$.
\smallskip

{\it Claim.} $h$ admits an extension to a homeomorphism $\hat{h} : \mathbb{S}^3 - P \longrightarrow \mathbb{S}^3 - K$.

{\it Proof of claim.} Let $B \subseteq \mathbb{R}^3$ be a closed $3$--ball so big that it contains $P$ in its interior and denote by $S$ its boundary $2$--sphere. Clearly $\mathbb{R}^3 - S$ has two connected components: an unbounded one $U$ (which is the complement of $B$) and a bounded one ${\rm int}\ B$. Let us examine the connected components of $(\mathbb{R}^3 - P) - S$. Since $P$ has zero cohomology in degree $2$, it does not disconnect any connected open set that contains it (this follows from Alexander's duality). Writing $(\mathbb{R}^3 - P) - S = U \cup (({\rm int}\ B) - P)$ thus exhibits $(\mathbb{R}^3 - P) - S$ as the disjoint union of two connected sets, which are therefore its connected components. Notice that both have nonzero homology in dimension $2$: the first one because it is just homeomorphic to $\mathbb{S}^2 \times \mathbb{R}$ by construction; the second one because it is the result of removing a nonempty compact set from the interior of an open set with trivial homology (formally, this follows again from an application of Alexander's duality).

Since the $2$--sphere $S$ is contained in $\mathbb{R}^3 - P$ we may transform it via $h$ to obtain another $2$--sphere $S' := h(S)$. By the polyhedral Sch\"onflies theorem, $S'$ bounds a closed $3$--ball $B'$ in $\mathbb{R}^3$. Notice that we do not know a priori whether $K$ is contained in this ball; this is precisely what we want to prove now. Denote by $U' := \mathbb{R}^3 - B'$. The same reasoning used in the previous paragraph, but now taking into account that $K$ may be contained in $U'$ or in $B'$, leads to the conclusion that the connected components of $(\mathbb{R}^3 - K) - S'$ are either \[U' - K \quad \text{and} \quad {\rm int}\ B' \quad \quad \text{(if $K \subseteq U'$)},\] or \[U' \quad \text{and} \quad ({\rm int}\ B') - K \quad \quad \text{(if $K \subseteq B'$)}.\] We mentioned earlier that both components of $(\mathbb{R}^3 - P) - S$ have nonzero homology in dimension $2$, so the same must be true of the components of $(\mathbb{R}^3 - K) - S'$ because the two spaces are homeomorphic under $h$. This rules out the first alternative above, because ${\rm int}\ B'$ has trivial homology. Thus it must be the case that $K \subseteq {\rm int}\ B'$ and the components of $(\mathbb{R}^3 - K)-S'$ are precisely $U'$ and $({\rm int}\ B') - K$. Now, there are two possibilities for how $h$ matches the components of $(\mathbb{R}^3 - P)-S$ and $(\mathbb{R}^3 - K)-S'$. Either we have \[h(U) = U' \quad \text{and} \quad h(({\rm int}\ B) - P) = ({\rm int}\ B') - K\] or we have \[h(U) = ({\rm int}\ B') - K \quad \text{and} \quad h(({\rm int}\ B) - P) = U'.\] However, the second alternative is not possible. From $h(U) = ({\rm int}\ B') - K$ we would get $h(U \cup S) = B' - K$, and this would extend to a homeomorphism between the one--point compactifications of $U \cup S$ and $B' - K$. That of $U \cup S$ is just a $3$--ball, whereas the one--point compactification of $B' - K$ can be thought of as $B'/K$ with $[K]$ acting as the point at infinity. But then $B'/K$ is a $3$--ball, which implies that $K$ is pointlike and by \cite[Proposition 2.4.5, p. 61]{davermanvenema1} that $K$ is actually cellular, contradicting the definition of a toroidal set. We are left with $h(U) = U'$. Then $h(U \cup S) = U' \cup S$ and this can be extended to the point at infinity to yield the required $\hat{h}$. This concludes the proof of the claim. $_{\blacksquare}$
\smallskip

Let $N$ be a regular neighbourhood of $P$ and define $T := \hat{h}(N - P) \cup K$. Clearly $N$ contains $K$. In fact, the following holds:
\smallskip

{\it Claim.} $T$ is a compact manifold that is a neighbourhood of $K$. Moreover, $T$ is connected and $\partial T$ is either a $2$--sphere or a $2$--torus.

{\it Proof of claim.} Begin by writing $\mathbb{S}^3 = (\mathbb{S}^3 - N) \uplus (N - P) \uplus P$. Then we see that $\mathbb{S}^3 = \hat{h}(\mathbb{S}^3 - N) \uplus \hat{h}(N - P) \uplus K = \hat{h}(\mathbb{S}^3 - N) \uplus T$, and this gives the convenient relation $\mathbb{S}^3 - T = \hat{h}(\mathbb{S}^3 - N)$.

To prove that $N$ is a neighbourhood of $K$, suppose the contrary. Then there exists a sequence $\{q_n\}$ in $\mathbb{S}^3 - T$ that converges to some point in $K$. The sequence $\{p_n := \hat{h}^{-1}(q_n)\}$ is then contained in $\hat{h}^{-1}(\mathbb{S}^3 - T) = \mathbb{S}^3 - N$, and so after passing to a subsequence we may assume that it converges to some point $p \in \overline{\mathbb{S}^3 - N} = \mathbb{S}^3 - {\rm int}\ N$. But this set is still contained in the domain of $\hat{h}$, and so $q_n = \hat{h}(p_n)$ would converge to $\hat{h}(p)$, which is not possible because $\{q_n\}$ does not converge in $\mathbb{S}^3 - K$.

Writing $T = (T - K) \cup {\rm int}\ T$ exhibits it as the union of two open (in $T$) sets. The first is a $3$--manifold with boundary because it is homeomorphic to $N - P$ via $\hat{h}^{-1}$ by definition, whereas the second is a $3$--manifold without boundary because it is open in $\mathbb{S}^3$. Thus $T$ is a compact $3$--manifold with boundary. Resorting again to Alexander's duality and bearing in mind that $\mathbb{S}^3 - T$ and $\mathbb{S}^3 - N$ are homeomorphic via $\hat{h}$ we have \[H^q(T;\mathbb{Z}) = H_{3-q}(\mathbb{S}^3 - T;\mathbb{Z}) = H_{3-q}(\mathbb{S}^3 - N;\mathbb{Z}) = H^q(N;\mathbb{Z}) = H^q(P;\mathbb{Z}),\] where in the last step we have used that $N$ is a regular neighbourhood of $P$ and so collapses onto it. Recalling the computation of the cohomology of $P$ at the beginning of this proof we conclude that $T$ is connected, has vanishing cohomology in degrees $q \geq 2$, and has cohomology either $0$ or $\mathbb{Z}$ in dimension $q = 1$. A simple argument using Lefschetz duality then shows that $\partial T$ is either a $2$--sphere or a $2$--torus. $_{\blacksquare}$
\smallskip

Now we repeat the same construction not just for a single $N$, but for a neighbourhood basis of $P$ comprised of nested regular neighbourhoods $N_j$. As before, denote by $T_j = \hat{h}(N_j - P) \cup K$ the corresponding neighbourhoods of $K$. Since the sets $\mathbb{S}^3 - N_j$ clearly form an ascending sequence whose union is $\mathbb{S}^3 - P$, the sets $\hat{h}(\mathbb{S}^3 - N_j)$ also form an ascending sequence whose union is $\hat{h}(\mathbb{S}^3 - P) = \mathbb{S}^3 - K$. It then follows from the expression $\mathbb{S}^3 - T_j = \hat{h}(\mathbb{S}^3 - N_j)$ obtained above that the $T_j$ form a decreasing sequence whose intersection is $K$. Thus $\{T_j\}$ is a nested neighbourhood basis of $K$ comprised of compact, connected manifolds.

We established earlier that $\partial T_j$ is either a $2$--sphere or a $2$--torus. In the first case, the Sch\"onflies theorem for polyhedral spheres implies that $T_j$ is a $3$--ball. If this occurs for infinitely many indices $j$ then $K$ would be cellular, which contradicts the definition of a toroidal set. Thus we may assume without loss of generality that all the $\partial T_j$ are $2$--tori. Consider any one $T_{j_0}$. Since $K$ is toroidal, it has a neighbourhood $T$ which is a solid torus contained in the interior of $T_{j_0}$. Choose $j_1 > j_0$ so that $T_{j_1} \subseteq {\rm int}\ T_{j_0}$. Observe that $T_{j_0} - {\rm int}\ T_{j_1}$ is homeomorphic via $h$ to $N_{j_0} - {\rm int}\ N_{j_1}$, which is in turn homeomorphic to $(\partial N_{j_0}) \times [0,1]$ by the annulus theorem for regular neighbourhoods. Pulling back this homeomorphism via $h$ shows that $T_{j_0} - {\rm int}\ T_{j_1}$ is also homeomorphic to $(\partial T_{j_0}) \times [0,1]$. A direct application of a concentricity theorem of Edwards (\cite[Theorem 2, p. 419]{chedwards2}) then ensures that $T$ is concentric with both $T_{j_0}$ and $T_{j_1}$, so in particular all three of them are homeomorphic; hence, they are all (concentric) solid tori. Thus $\{T_j\}$ is the required neighbourhood basis of $K$.
\smallskip

(ii) $\Rightarrow$ (iii) Let $\{T_j\}$ and $\{T'_k\}$ be two neighbourhood bases of $K$ comprised of nested solid tori. Assume that the $T_j$ are all concentric. Choose $k_0$ big enough so that $T'_{k_0} \subseteq {\rm int}\ T_1$. Pick any $k_1 > k_0$ and finally let $j$ be big enough so that $T_{j} \subseteq {\rm int}\ T'_{k_1}$. Thus we have the nested tori $T_{j} \subseteq T'_{k_1} \subseteq T'_{k_0} \subseteq T_1$. Since $T_1$ and $T_{j}$ are concentric by assumption, the concentrity theorem of Edwards mentioned in the previous paragraph ensures that $T'_{k_0}$ is concentric with both $T_1$ and $T_j$. Then again the same theorem, this time applied to $T_j \subseteq T'_{k_1} \subseteq T'_{k_0}$, ensures that $T'_{k_1}$ is concentric with $T'_{k_0}$. Thus the $\{T'_k\}$ are all concentric for $k \geq k_0$.
\smallskip

(iii) $\Rightarrow$ (i) Let $\{T_j\}$ be a standard basis for $K$ comprised of concentric tori. Denote by $R_j$ each region $T_j - {\rm int}\ T_{j+1}$, so that $T_1 - K = \bigcup R_j$. Since the $\{T_j\}$ are concentric, each $R_j$ is homeomorphic to $(\partial T_j) \times [j,j+1]$ via some homeomorphism $h_j$ such that $h_j(p,j) = p$ for every $p \in \partial T_j$. It is then easy to modify the $h_j$ in such a way that they can all be pasted together to yield a homeomorphism from $\bigcup R_j$ onto $(\partial T_1) \times [1,+\infty)$. (See the proof of implication (ii) $\Rightarrow$ (i) in \cite[Theorem 3.11, p. 18]{hecyo1} for more details). The latter is, in turn, homeomorphic to $T_1 - \gamma$, where $\gamma$ is a core curve of $T_1$. Thus we have obtained a homeomorphism $T_1 - K \cong T_1 - \gamma$ that is the identity on $\partial T_1$. Extending this by the identity to all of $\mathbb{R}^3$ yields a homeomorphism between $\mathbb{R}^3 - K$ and $\mathbb{R}^3 - \gamma$, where $\gamma$ is a polyhedral simple closed curve. This shows that $K$ is weakly tame.
\end{proof}

\begin{remark} Since a polyhedral simple closed curve is perhaps the simplest example of a toroidal set, it would also seem reasonable to define a toroidal set to be weakly tame if its complement is homeomorphic to the complement of such a curve, rather than to the complement of an arbitrary polyhedron $P$. It follows from the proof of (iii) $\Rightarrow$ (i) in the preceding proposition that both definitions are equivalent.
\end{remark}

The next two theorems provide useful characterizations of weakly tame sets in terms of the genus and the self-index:

\begin{theorem} \label{teo:carac1} Let $K \subseteq \mathbb{R}^3$ be a toroidal set. Then $K$ is weakly tame if and only the genus of $K$ is finite and $\mathcal{N}(K) \sim 1$.
\end{theorem}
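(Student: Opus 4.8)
The plan is to deduce everything from the concentricity criterion of Proposition \ref{prop:charac_wtame}, so that the whole theorem reduces to matching the condition ``$K$ has a concentric standard basis'' with the pair of conditions ``$g(K) < \infty$ and $\mathcal{N}(K) \sim 1$''. For the forward implication, suppose $K$ is weakly tame. By Proposition \ref{prop:charac_wtame} there is a standard basis $\{T_j\}$ all of whose members are concentric, so each region $T_j - {\rm int}\ T_{j+1}$ is a product $(\partial T_j) \times [0,1]$. I would first observe that concentricity makes a core curve of $T_{j+1}$ isotopic inside $T_j$ to a core curve of $T_j$; hence the two cores represent the same knot, which gives $g(T_j) = g(T_{j+1})$ for all $j$ and therefore $g(K) < \infty$. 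The same isotopy shows $N(T_{j+1},T_j) = 1$, since the geometric index is invariant under ambient isotopies of the inner torus and a core of $T_j$ trivially has index one; by Proposition \ref{prop:N1} this yields $\mathcal{N}(K) \sim 1$.

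For the converse, suppose $g(K) = g < \infty$ and $\mathcal{N}(K) \sim 1$. Since the genus is finite I can choose a standard basis $\{T_j\}$ with $g(T_j) = g$ for every $j$, and since $\mathcal{N}(K) \sim 1$ Proposition \ref{prop:N1} guarantees that for this very basis $N(T_{j+1},T_j) = 1$ for all large $j$. Discarding finitely many terms, I may assume that each consecutive pair $T_{j+1} \subseteq T_j$ satisfies $N(T_{j+1},T_j) = 1$ and $g(T_{j+1}) = g(T_j)$. The theorem then follows from Proposition \ref{prop:charac_wtame} once I establish the following claim: \emph{a nested pair $T_2 \subseteq T_1$ with $N(T_2,T_1) = 1$ and $g(T_2) = g(T_1)$ is concentric.}

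The proof of this claim is the main obstacle. Since $N(T_2,T_1) = 1$, there is a meridional disk $D$ of $T_1$ meeting a core curve $\gamma$ of $T_2$ in a single point. Cutting $T_1$ along $D$ turns it into a ball $C \cong \mathbb{D}^2 \times [0,1]$ in which $\gamma$ becomes a single arc $\alpha$ joining the two disk faces, and $T_2$ becomes a tubular neighbourhood $V(\alpha)$ of $\alpha$. Reconstructing $T_1$ by regluing the faces exhibits the core $\gamma$ as a connected sum $k_1 \# k_\alpha$, where $k_1$ is the knot type of a core of $T_1$ and $k_\alpha$ is the local knot of the arc $\alpha$. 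By additivity of the Seifert genus under connected sum, $g(T_2) = g(\gamma) = g(k_1) + g(k_\alpha) = g(T_1) + g(k_\alpha)$, so the hypothesis $g(T_2) = g(T_1)$ forces $g(k_\alpha) = 0$; that is, $k_\alpha$ is trivial and $\alpha$ is an unknotted (boundary-parallel) arc in $C$.

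Finally I would isotope $\alpha$ to a straight vertical arc, so that $C - {\rm int}\ V(\alpha) \cong A \times [0,1]$ with $A$ an annulus; regluing the two faces then recovers $T_1 - {\rm int}\ T_2$ as the mapping torus of the homeomorphism of $A$ induced by the gluing homeomorphism of $\mathbb{D}^2$ that reconstructs $T_1$. As every orientation-preserving homeomorphism of the disk is isotopic to the identity, this mapping torus is a product $(\partial T_1) \times [0,1]$, and a little care with the identification along $\partial T_1$ gives exactly the structure required by the definition of concentricity. This proves the claim; together with Proposition \ref{prop:charac_wtame} (consecutive concentricity propagates to the whole basis by Edwards' theorem, as in the proof of that proposition) it completes the converse. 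I expect the delicate points to be the connected-sum description of a geometric-index-one core and the verification that a trivial local knot really produces a genuine product region rather than merely a homology product.
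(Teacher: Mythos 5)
Your proof is correct and its skeleton coincides with the paper's: the forward direction is the same (concentric basis via Proposition \ref{prop:charac_wtame}, whence equal genera and index one), and the converse reduces, exactly as in the paper, to the key claim that a nested pair with $N(T_2,T_1)=1$ and $g(T_2)=g(T_1)$ is concentric, using genus additivity under connected sum to kill the pattern knot. Where you genuinely diverge is in how you prove that claim. The paper outsources both halves to the literature: the connected-sum decomposition of an index-one satellite is Schubert's Satz 2, and --- crucially --- once the cores of $T_1$ and $T_2$ are known to be equivalently knotted, concentricity is obtained by citing Edwards' concentricity theorem (in the paper's proof this is packaged through the spine argument of Lemma \ref{lem:converse} only for Theorem \ref{teo:carac2}; for Theorem \ref{teo:carac1} it is a direct appeal to \cite[Theorem 3]{edwards1}). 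You instead re-derive everything by hand: cutting $T_1$ along a meridional disk meeting the core of $T_2$ once, identifying the local knot of the resulting arc, showing it is trivial, and reconstructing $T_1 - {\rm int}\ T_2$ as an annulus bundle over $\mathbb{S}^1$ whose monodromy is freely isotopic to the identity (note a Dehn twist of the annulus is freely, though not relatively, trivial, so the bundle is a product, and any product structure carrying $\partial T_1$ to the $0$--level can be normalized to the boundary condition in the definition of concentricity). This buys self-containedness at the price of several standard-but-fiddly details you rightly flag: you must first arrange, by shrinking $T_2$ to a thin regular neighbourhood of its core via uniqueness of regular neighbourhoods (an ambient isotopy, so harmless), that the minimal meridional disk meets $T_2$ in a single meridian disk of $T_2$, not just the core in one point; you need the classical fact that an arc in a $3$--cell with trivial associated knot is boundary-parallel; and the unknotting isotopy of $\alpha$ must be performed so that the regluing homeomorphism preserves the small disk $D'$ (conjugate the gluing map by the isotopy's end). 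None of these is a genuine obstruction, so your argument stands as a more elementary, citation-free alternative to the paper's Schubert--Edwards route.
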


When the genus of $K$ is strictly positive, the condition $\mathcal{N}(K) \sim 1$ can be replaced with a weaker one:

\begin{theorem} \label{teo:carac2} Let $K \subseteq \mathbb{R}^3$ be a toroidal set with positive genus. Then $K$ is weakly tame if and only if the genus of $K$ is finite and $K$ is nontrivial.
\end{theorem}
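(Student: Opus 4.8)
The plan is to deduce Theorem~\ref{teo:carac2} from the already available Theorem~\ref{teo:carac1}. Since the latter characterizes weakly tame toroidal sets as those of finite genus with $\mathcal{N}(K) \sim 1$, and both theorems carry ``finite genus'' on the conclusion side, the whole problem reduces to showing that, for a toroidal set of finite \emph{positive} genus, the condition $\mathcal{N}(K) \sim 1$ is equivalent to nontriviality of $K$ (i.e.\ $\check{H}^1(K;\mathbb{Z}) \neq 0$). I would therefore organize the argument as two implications, keeping in mind that one is essentially formal while the other is where the genus hypothesis does the work.

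The formal half is the implication $\mathcal{N}(K) \sim 1 \Rightarrow K$ nontrivial, which needs no assumption on the genus: by Proposition~\ref{prop:N1}, $\mathcal{N}(K) \sim 1$ forces $\check{H}^1(K;\mathbb{Z}) = \mathbb{Z}$, so $K$ is nontrivial. Combined with Theorem~\ref{teo:carac1} this already yields the ``only if'' direction of Theorem~\ref{teo:carac2}: if $K$ is weakly tame then $g(K)$ is finite and $\mathcal{N}(K) \sim 1$, hence $K$ is nontrivial. Note that this direction holds for every toroidal set, whether or not its genus is positive.

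For the substantive ``if'' direction I assume that $g(K) = g$ is finite with $g \geq 1$ and that $K$ is nontrivial, and I aim to show $\mathcal{N}(K) \sim 1$; the conclusion then follows from Theorem~\ref{teo:carac1}. As recorded in Section~\ref{sec:prelim}, finite genus $g$ lets me choose a standard basis $\{T_j\}$ with $g(T_j) = g$ for every $j$, and I would compute everything in this basis (legitimate since $\mathcal{N}(K)$ is basis-independent by Proposition~\ref{prop:indep} and nontriviality is a property of $\check{H}^1(K;\mathbb{Z})$). Writing $w_j$ and $N_j$ for the winding number and geometric index of $T_{j+1}$ in $T_j$, nontriviality means that only finitely many $w_j$ vanish, so $w_j \geq 1$ for all large $j$, and hence $N_j \geq w_j \geq 1$ for large $j$. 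The key step is to show that for these large $j$ one cannot have $N_j \geq 2$: here I would invoke a Schubert-type genus inequality for nested solid tori to the effect that a nonzero winding number together with geometric index $N_j \geq 2$ forces $g(T_{j+1}) > g(T_j)$ (more precisely, one expects $g(T_{j+1}) \geq N_j\, g(T_j)$ once $w_j \neq 0$). Since in our basis $g(T_{j+1}) = g(T_j) = g$, this rules out $N_j \geq 2$, leaving $N_j = 1$ for all large $j$, that is, $\mathcal{N}(K) \sim 1$.

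The main obstacle is precisely this genus inequality, and it is also where the positive-genus hypothesis becomes indispensable. The naive statement $g(T_{j+1}) \geq N_j\, g(T_j)$ is \emph{false} without a condition on the winding: a Whitehead-type pattern has winding number $0$ and geometric index $2$ yet bounds a genus-one surface no matter how knotted $T_j$ is, so the index can jump while the genus stays put. This is exactly the behaviour that nontriviality forbids, and it explains why genus zero must be excluded (for $g = 0$ the inequality is vacuous, and indeed Theorem~\ref{teo:carac2} fails there, as an unknotted torus wound with winding number $1$ but geometric index $3$ shows). I would extract the correct form of the inequality from Schubert's study of knots in solid tori \cite{schubert} together with the genus estimates of \cite{hecyo1}; pinning down the precise hypotheses under which nonzero winding plus geometric index at least two drives the genus of the inner core strictly above that of the outer core is the part I expect to require the most care.
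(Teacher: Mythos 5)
Your overall architecture is viable and genuinely different from the paper's: you route everything through Theorem~\ref{teo:carac1}, reducing the theorem to showing that $w_j \geq 1$ plus a constant-genus basis with $g(T_j) = g > 0$ forces $N_j = 1$ for large $j$, whereas the paper proves outright that such a pair $(T_j, T_{j+1})$ is \emph{concentric} (Lemma~\ref{lem:converse}) and concludes via the concentricity characterization of weak tameness (Proposition~\ref{prop:charac_wtame}). Your ``only if'' direction, via Proposition~\ref{prop:N1}, is correct and matches what the paper leaves implicit. The problem is that the single substantive step of your ``if'' direction --- ruling out $N_j \geq 2$ --- is left to an unproven ``Schubert-type genus inequality,'' and the inequality you conjecture is not Schubert's and is in fact false.

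Schubert's satellite-genus inequality, as used in the paper, reads $g(T_{j+1}) \geq w_j \cdot g(T_j) + g'$ where $g'$ is the genus of the \emph{pattern} knot; it involves the winding number, not the geometric index. With equal positive genus it yields only $w_j = 1$ and $g' = 0$, which is entirely compatible with $N_j = 3$: Lemma~\ref{lem:torus} with $w = 1$ and $k = 1$ produces an \emph{unknotted} pattern with winding number $1$ and geometric index $3$. Your sharper form $g(T_{j+1}) \geq N_j\, g(T_j)$ fails for exactly such patterns: the satellite of a genus-$g$ companion with this pattern bounds a Seifert surface of genus at most $g+1$ (the companion's Seifert surface plus one handle for the clasp), and $g + 1 < 3g$ already for $g = 1$. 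So the statement you actually need --- $w_j \geq 1$ and $g(T_j) = g(T_{j+1}) = g > 0$ imply $N_j = 1$ --- is true but does not follow from Schubert's inequality; it is precisely the content of the paper's Lemma~\ref{lem:converse}, whose proof requires a further geometric argument: take a minimal Seifert surface $S$ for the core of $T_{j+1}$, arrange $S \cap \partial T_j$ to be a single longitude, and run an Euler characteristic count on $S_i = S \cap T_j$ to conclude that $S_i$ is an annulus; regular neighbourhood theory (Hudson--Zeeman spines plus Edwards' concentricity theorem) then yields concentricity, hence $N_j = 1$. Since you explicitly defer this step (``the part I expect to require the most care''), the proposal has a genuine gap at its only nontrivial point, and the cited source would not close it as stated.
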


\begin{example} \label{ex:knotted} (1) None of the (unknotted) generalized solenoids of Example \ref{ex:gen_sol} satisfies $\mathcal{N} \sim 1$; thus, none of them is weakly tame.

(2) Start with a nontrivial toroidal set $K \subseteq \mathbb{R}^3$ which satisfies $\mathcal{N}(K) \not\sim 1$. Let $T$ be a neighbourhood of $K$ that is a solid torus and such that the inclusion $K\subseteq T$ induces a nonzero map in cohomology. Let $e : T \longrightarrow \mathbb{R}^3$ be an embedding such that $e(T)$ is nontrivially knotted. Then $K' := e(K)$ has infinite genus.

{\it Proof.} Notice that $\check{H}^1(K';\mathbb{Z}) = \check{H}^1(K;\mathbb{Z}) \neq 0$ because \v{C}ech cohomology is a topological invariant; similarly $\mathcal{N}(K') = \mathcal{N}(K) \not\sim 1$ by the invariance of the self-index under local homeomorphisms (Proposition \ref{prop:local}). Thus $K'$ is not weakly tame by Theorem \ref{teo:carac1}. The genus of $K'$ is positive: if $K'$ were unknotted, then $e(T)$ should also be unknotted (\cite[Proposition 2.10, p. 10]{hecyo1}), but it is not by construction. It then follows from Theorem \ref{teo:carac2} that the genus of $K'$ must be infinite.
\end{example}

Now we aim to prove Theorems \ref{teo:carac1} and \ref{teo:carac2} above. The proof of the first is rather simple:

\begin{proof}[Proof of Theorem \ref{teo:carac1}] Suppose first $K$ is weakly tame. Then it has a standard basis of concentric tori $\{T_j\}$ by Proposition \ref{prop:charac_wtame}. Clearly concentric tori have the same genus and the geometric index $N(T_{j+1},T_j)$ is one. It follows that $g(K) < +\infty$ (by definition) and $\mathcal{N}(K) \sim 1$ by Proposition \ref{prop:indep}.

Now suppose $g(K) < +\infty$ and $\mathcal{N}(K) \sim 1$. By the definition of the genus of a toroidal set there exists a standard basis $\{T_j\}$ for $K$ such that $g(T_j) = g(K)$ for every $j$. Also, the assumption that $\mathcal{N}(K) \sim 1$ implies, by Proposition \ref{prop:N1}, that $N(T_{j+1},T_j) = 1$ for large enough $j$. This condition on the geometric index entails (see \cite[Satz 2, p. 171]{schubert}) that the core curve of $T_{j+1}$ is the connected sum of the core curve of $T_j$ with some other knot $\gamma_j$. But, since the genera of $T_j$ and $T_{j+1}$ are equal and genus is additive under connected sums, it follows that the genus of $\gamma_j$ must be zero. Therefore the core curves of $T_j$ and $T_{j+1}$ are equivalently knotted. A result of Edwards \cite[Theorem 3, p.4]{edwards1} then implies that $T_j$ and $T_{j+1}$ are concentric. Hence $K$ is weakly tame by Proposition \ref{prop:charac_wtame}.
\end{proof}

The proof of Theorem \ref{teo:carac2} is slightly more involved and requires a previous lemma:

\begin{lemma} \label{lem:converse} Let $T_2 \subseteq T_1$ be a nested pair of solid tori. Assume that:
\begin{itemize}
	\item[(i)] The winding number $w$ of $T_2$ inside $T_1$ is positive.
	\item[(ii)] The genera $g$ of $T_1$ and $T_2$ are equal and positive.
\end{itemize}
Then $T_1$ and $T_2$ are concentric.
\end{lemma}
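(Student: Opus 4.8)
The plan is to translate the statement into knot theory and then reduce it to the situation already handled in the proof of Theorem \ref{teo:carac1}. Let $\kappa_1$ and $\kappa_2$ be core curves of $T_1$ and $T_2$, so that $g(\kappa_1) = g(\kappa_2) = g$. Since the winding number satisfies $0 < w \leq N(T_2,T_1)$, Schubert's criterion (\cite[Satz 1, p. 171]{schubert}) shows that $\kappa_2$ is \emph{not} contained in a $3$--cell of $T_1$; and as $g(\kappa_1) = g > 0$ the torus $T_1$ is knotted, so $\kappa_2$ is a satellite knot with companion $\kappa_1$ and winding number $w$. The whole argument hinges on proving that in fact $N(T_2,T_1) = 1$. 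Once this is known the reasoning of Theorem \ref{teo:carac1} applies verbatim: by \cite[Satz 2, p. 171]{schubert} the curve $\kappa_2$ is the connected sum of $\kappa_1$ with some knot $\gamma$; the equality of genera together with the additivity of genus under connected sum forces $g(\gamma) = 0$, so $\kappa_1$ and $\kappa_2$ are equivalently knotted; and Edwards' concentricity theorem (\cite[Theorem 3, p.4]{edwards1}) then yields that $T_1$ and $T_2$ are concentric.

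To control $w$ I would invoke the classical inequality of Schubert for the genus of a satellite knot,
\[ g(\kappa_2) \ \geq\ w \cdot g(\kappa_1). \]
Feeding in the hypothesis $g(\kappa_1) = g(\kappa_2) = g$ this reads $g \geq w\,g$, and since $g > 0$ we may cancel to obtain $w \leq 1$. Together with the positivity $w \geq 1$ this pins down $w = 1$. It is exactly here that the hypothesis $g > 0$ enters: when $g = 0$ the inequality is vacuous, which is precisely why the unknotted case must be handled by other means in Theorem \ref{teo:carac1}.

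The remaining, and most delicate, point is to upgrade $w = 1$ to $N(T_2,T_1) = 1$. Note that $w = 1$ does not by itself bound the geometric index, since a core curve may run forward and backward across a meridional disk, producing $w = 1$ but $N(T_2,T_1) = 1 + 2k$; each such backtracking pair of strands is invisible to the winding number. The idea is that, because the companion $\kappa_1$ is genuinely knotted, any backtracking must raise the genus of $\kappa_2$ strictly above $g(\kappa_1)$, so that the equality $g(\kappa_2) = w\cdot g(\kappa_1)$ we have just forced rules it out. Concretely, I would analyse the equality case of Schubert's inequality by taking a minimal-genus Seifert surface $S$ for $\kappa_2$ and studying how it can be cut along $\partial T_1$: equality should be attainable only when $S$ meets a meridional disk of $T_1$ in a single point, i.e. when the satellite pattern is isotopic in $T_1$ to the core, which is exactly the statement $N(T_2,T_1) = 1$. (As a sanity check on this dichotomy, the only winding-number-one cables are the trivial $(1,q)$ ones, whereas genuinely nontrivial winding-number-one patterns, such as the Mazur pattern, strictly increase the genus.)

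I expect the Seifert-surface bookkeeping behind this equality analysis to be the main obstacle: one has to show that a knotted companion prevents any backtracking strand from being absorbed without a strict gain in genus, which amounts to a careful accounting of the pieces into which $\partial T_1$ cuts $S$ and of how they may be recombined. Everything after $N(T_2,T_1) = 1$ is secured requires no new input beyond the citations already used in Theorem \ref{teo:carac1}.
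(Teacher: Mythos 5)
Your opening move coincides with the paper's: Schubert's satellite-genus inequality, in the sharp form $g(\kappa_2) \geq w \cdot g(\kappa_1) + g'$ where $g'$ is the genus of the pattern, together with $g(\kappa_1)=g(\kappa_2)=g>0$ forces $w=1$, exactly as in the paper. The genuine gap is precisely where you flag it: the implication ``$w=1$ and equal positive genera imply $N(T_2,T_1)=1$'' is left as an \emph{expected} outcome of an equality analysis that is never carried out, and this analysis is the entire mathematical content of the proof. Moreover, as written your target statement is not well formed: a Seifert surface meets a meridional disk generically in arcs and circles, not points, so ``$S$ meets a meridional disk of $T_1$ in a single point'' does not express the condition you need. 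The correct assertion, which is what the paper proves, concerns the piece $S_i := S \cap T_1$: it must be an annulus cobounded by $\kappa_2$ and a longitude $\lambda_1$ of $T_1$.

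Here is how the paper closes exactly this gap, and note that it never passes through $N(T_2,T_1)=1$ at all. By \cite[Lemma 2.11, p. 21]{burdezieschang1} one may choose a minimal-genus Seifert surface $S$ for $\kappa_2$ meeting $\partial T_1$ transversally in a single longitude $\lambda_1$, with the exterior piece $S_o$ connected. Gluing to $S_o$ the annulus in $T_1$ cobounded by $\lambda_1$ and a core $\kappa_1$ yields a Seifert surface for $\kappa_1$, whence $\chi(S_o) \leq 1-2g$; since $\chi(S)=1-2g$ and $\chi(S)=\chi(S_i)+\chi(S_o)$, one gets $\chi(S_i)\geq 0$, and as $S_i$ is connected with exactly the two boundary curves $\kappa_2$ and $\lambda_1$, it is an annulus. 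Concentricity then follows directly, with no appeal to Satz 2 of \cite{schubert} or to Edwards' theorem \cite{edwards1}: the chain of expansions and collapses $\kappa_1 \nearrow A \searrow \lambda_1 \nearrow S_i \searrow \kappa_2$ together with the Hudson--Zeeman spine criterion \cite[Corollary 5, p. 727]{hudsonzeeman2} shows $\kappa_2$ is a spine of $T_1$, so $T_1$ and $T_2$ are both regular neighbourhoods of $\kappa_2$ and the regular neighbourhood annulus theorem gives that they are concentric; $N(T_2,T_1)=1$ is an after-the-fact consequence, not an intermediate step. Your longer route ($N=1$, then Satz 2, then genus additivity, then Edwards, mimicking Theorem \ref{teo:carac1}) would indeed be valid once the annulus is in hand, since an annulus isotopes $\kappa_2$ to a core of $T_1$ and hence gives $N(T_2,T_1)=1$ immediately; but at that point the detour buys nothing, and without the Euler-characteristic bookkeeping above your proposal does not yet contain a proof.
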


Before proving the lemma, recall that a spine of a polyhedral manifold $N$ is a polyhedron $P \subseteq {\rm int}\ N$ such that $N$ collapses onto $P$, which we denote by $N \searrow P$ as customary. We will need the following result of Hudson and Zeeman (see \cite[Corollary 5, p. 727]{hudsonzeeman2}): if $P$ and $P'$ are two polyhedra in ${\rm int}\ N$ that are related by a sequence of collapses and expansions in $N$ (but not necessarily in ${\rm int}\ N$) then $P$ is a spine of $N$ if and only if $P'$ is a spine of $N$. In our case $N$ will always be a neighbourhood of $P$ in the ambient $3$--manifold (so in particular its interior as a manifold coincides with its topological interior), and then saying that $P$ is a spine of $N$ is equivalent to saying that $N$ is a regular neighbourhood of $P$ (see \cite[Corollary 3.30, p. 41]{rourkesanderson1}).

\begin{proof}[Proof of Lemma \ref{lem:converse}] Let $\gamma_1$ and $\gamma_2$ be core curves for $T_1$ and $T_2$ respectively. Let also $\lambda_1 \subseteq \partial T_1$ be a longitude of $T_1$.
\smallskip

{\it Claim.} It suffices to show that $\gamma_2$ and $\lambda_1$ cobound an annulus in $T_1$.

{\it Proof of claim.} Let $A'$ be the annulus cobounded by $\gamma_2$ and $\lambda_1$ in $T_1$, and denote by $A$ an annulus cobounded by $\gamma_1$ and $\lambda_1$ in $T_1$ (this always exists). Any annulus collapses onto each one of its boundary curves. Thus we may write $\gamma_1 \nearrow A \searrow \lambda_1 \nearrow A' \searrow \gamma_2$ within $T_1$, and since $\gamma_1$ is a spine of $T_1$, the result of Hudson and Zeeman mentioned earlier implies that $\gamma_2$ is a spine of $T_1$. This means that $T_1$ is a regular neighbourhood of $\gamma_2$ in $\mathbb{R}^3$ and, since $T_2$ is also a regular neighbourhood of $\gamma_2$ (the latter being a core for $T_2$), the regular neighbourhood annulus theorem (see for instance \cite[Corollary 1, p. 725]{hudsonzeeman2} or \cite[Corollary 3.18, p. 36]{rourkesanderson1}) guarantees that $T_2$ and $T_1$ are concentric. This proves the claim. $_{\blacksquare}$
\smallskip

Now we shall show $\gamma_2$ and $\lambda_1$ indeed cobound an annulus in $T_1$. This follows from an examination of the proof of Schubert's relation between the genus of a satellite knot and its companion as given, for example, in the book by Burde and Zieschang \cite{burdezieschang1}. In our setting $\gamma_2$ is a satellite of $\gamma_1$ and the relation just mentioned reads $g(T_2) \geq w \cdot g(T_1) + g'$, where $g'$ is the genus of the pattern of $T_2$ inside $T_1$. By assumption $g(T_1) = g(T_2) = g > 0$ and (since $g' \geq 0$ and $w \geq 1$), this inequality implies $w = 1$.

Let $S$ be a Seifert surface of minimal genus that spans $\gamma_2$. Without loss of generality we may assume that $S$ is transversal to $\partial T_1$. Let $S_i$ and $S_o$ be the parts of $S$ that lie inside and outside $T_1$ respectively; that is, $S_i := S \cap T_1$ and $S_o := S \cap \overline{(\mathbb{R}^3 - T_1)}$. Both of these are (a priori, possibly nonconnected) orientable surfaces with boundary. We may choose (see \cite[Lemma 2.11, p. 21]{burdezieschang1}) $S$ so that: (i) it intersects $\partial T_1$ transversally in a longitude $\lambda_1$, and (ii) $S_o$ consists of a single connected component (whose boundary is, therefore, precisely $\lambda_1$).

Since $\lambda_1$ is a longitude of $T_1$, it cobounds an annulus in $T_1$ with its core curve $\gamma_1$. The union of this annulus and $S_o$ produces a Seifert surface for $\gamma_1$ whose genus is the same as that of $S_o$. Thus the genus of $S_o$ is bigger than or equal to the genus of $\gamma_1$, namely $g$. In terms of the Euler characteristic, $\chi(S_o) \leq 1 - 2g$.

Recall that $S$ was a minimal Seifert surface for $\gamma_2$, which also has genus $g$ by assumption. Thus we have $\chi(S) = 1-2g$. Then from $\chi(S) = \chi(S_i) + \chi(S_o)$ and the inequality of the previous paragraph we get $\chi(S_i) \geq 0$. Since $S_i$ arose by cutting the connected surface $S$ along $\partial T_1$, the boundary of each component of $S_i$ must have a nonempty intersection with $\partial T_1$. But, since $S \cap \partial T_1$ consists of the single curve $\lambda_1$, it follows that $S_i$ is actually connected and its boundary has precisely two components; namely $\gamma_2$ and $\lambda_1$. The only connected, orientable surface with two boundary components and nonnegative Euler characteristic is the annulus. Thus $S_i$ is the required annulus that cobounds $\gamma_2$ and the longitude $\lambda_1$ in $T_1$.
\end{proof}

\begin{proof}[Proof of Theorem \ref{teo:carac2}] It follows from the definition of the genus that $K$ has a standard basis $\{T_j\}$ such that $g(T_j) = g(K)$ for all $j$. Denote by $w_j$ the winding number of $T_{j+1}$ inside $T_j$. Since $\check{H}^1(K;\mathbb{Z}) \neq 0$ by assumption, we must have $w_j > 0$ for big enough $j$. It then follows from Lemma \ref{lem:converse} that all the pairs $(T_j,T_{j+1})$ are concentric from some $j$ onwards. Finally, Proposition \ref{prop:charac_wtame} implies that $K$ is weakly tame.
\end{proof}

\section{Toroidal sets with prescribed cohomology and self-index} \label{sec:construct}

Recall that one of the goals of this paper is to exhibit examples of toroidal sets having finite genus that cannot be realized as attractors for homeomorphisms of $\mathbb{R}^3$. To this end it is convenient to have an ample catalogue of toroidal sets. We will be especially interested in those that are not weakly tame, since a weakly tame set can always be realized as an attractor for a flow.

In this section we show how to construct toroidal sets $K$ that have a prescribed cohomology group $H$ (in degree $1$) and whose self-index is some prescribed group $N$. Of course $H$ and $N$ cannot be entirely arbitrary since they must be the direct limit of a direct sequence of the form \eqref{eq:dir_lim_Z}. Let us say that a group is \emph{feasible} if it has this form. Also, the fact that toroidal sets cannot be cellular requires that $N \neq 0$ by Proposition \ref{prop:cellular}. Since we are interested in toroidal sets that are not weakly tame and have finite genus, we must have $N \not\sim 1$ by Theorem \ref{teo:carac1}. Finally, the variables $H$ and $N$ are not entirely independent: in Proposition \ref{prop:2} we saw that the consistency condition has to be satisfied that $2 | H$ if and only if $2 | N$. The following theorem shows that for unknotted toroidal sets there are no further constraints:

\begin{theorem} \label{teo:construct} Let $H$ and $N$ be feasible groups. Suppose that $N \neq 0$, $N \not\sim 1$, and suppose the following consistency condition is satisfied: $2 | H$ if and only if $2 | N$. Then there exists a toroidal set $K$ such that:
\begin{itemize}
	\item[(i)] $\check{H}^1(K;\mathbb{Z}) = H$.
	\item[(ii)] $\mathcal{N}(K) = N$.
	\item[(iii)] $K$ is unknotted.
\end{itemize}
\end{theorem}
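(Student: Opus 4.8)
The plan is to separate the problem into an \emph{algebraic} step, in which one produces the two numerical sequences that will govern the winding and the geometric index, and a \emph{geometric} step, in which one realizes these sequences by an explicit nested family of unknotted tori. Recall from the discussion preceding this section that if $\{T_j\}$ is a standard basis with winding numbers $w_j$ and geometric indices $N_j = N(T_{j+1},T_j)$, then $\check{H}^1(K;\mathbb{Z})$ is the direct limit of the sequence of multiplications by the $w_j$ and, by Definition \ref{def:selfindex}, $\mathcal{N}(K)$ is the direct limit of the sequence of multiplications by the $N_j$. Moreover $0 \le w_j \le N_j$ and $w_j \equiv N_j \pmod 2$ for every $j$. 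Thus it suffices to (a) find sequences $\{w_j\}$ and $\{N_j\}$ of nonnegative integers whose associated direct limits are $H$ and $N$ respectively and which satisfy $w_j \le N_j$ and $w_j \equiv N_j \pmod 2$ for all $j$, and then (b) construct, for each $j$, an embedding of unknotted solid tori realizing the pair $(w_j,N_j)$, chaining these together into $\{T_j\}$ and setting $K := \bigcap_j T_j$.

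For step (a) I would argue as follows. Since $N \neq 0$ and $N \not\sim 1$, the trichotomy of Lemma \ref{lem:canonical} forces $N$ to be non-finitely generated, so $N$ admits representing sequences whose partial products diverge; this is the source of all the ``room'' needed below. The consistency hypothesis, read through Proposition \ref{prop:divisors}, says precisely that $H$ and $N$ share a common parity behaviour: either $2$ divides both, in which case both groups admit representing sequences that are eventually all even, or $2$ divides neither, in which case both admit representing sequences that are eventually all odd. Fix such representing sequences $\{u_i\}$ for $H$ and $\{v_i\}$ for $N$, after grouping and discarding finitely many initial terms so that they lie in the common parity regime and all $v_i \ge 2$. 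I would now interleave them greedily: process the $u_i$ one at a time, setting $w_j := u_i$, and at the same position let $N_j$ be the product of a block of consecutive, as yet unused $v_i$'s, the block taken long enough that $N_j \ge u_i = w_j$. Because the partial products of the $\{v_i\}$ diverge such a block can always be formed, and because every $v_i$ is eventually used the $\{N_j\}$ are a grouping of the $\{v_i\}$ and so still have direct limit $N$; meanwhile the $\{w_j\}$ are exactly the $\{u_i\}$ and have direct limit $H$. Since all terms now share the parity of the common regime, $w_j \equiv N_j \pmod 2$ holds automatically, and the block-length condition gives $w_j \le N_j$. The degenerate cases $H = 0$ (take all $w_j = 0$) and $H = \mathbb{Z}$ (take all $w_j = 1$) are handled the same way and are in fact easier.

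For step (b) I would build, for a given admissible pair $(w,N)$ with $a := (N+w)/2$ and $b := (N-w)/2$ (both nonnegative integers by the parity and inequality conditions), an unknotted solid torus $V$ inside a standard unknotted solid torus $W$ as follows: take a core curve $\gamma$ of $V$ that runs $a$ times in the positive longitudinal direction of $W$ and $b$ times in the negative direction, arranging the $b$ returning strands so that they clasp the remaining bundle in the manner of the Whitehead link and cannot be isotoped free. One checks that $\gamma$ can be taken unknotted in $\mathbb{R}^3$, that $W$ is unknotted, and that the algebraic count gives winding number $a - b = w$. A standard meridional disk of $W$ meets $\gamma$ in exactly $a + b = N$ points, so $N(V,W) \le N$. \textbf{The hard part will be the reverse inequality} $N(V,W) \ge N$: one must show that no meridional disk of $W$ can meet $\gamma$ in fewer than $N$ points, i.e. that each of the $b$ clasps is genuinely essential and contributes an irreducible pair of intersections. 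I expect to establish this by an innermost-disk argument in $W - {\rm int}\,V$: an intersection-minimizing meridional disk can be put in general position with the clasping annuli, and any bigon or trivial arc that would permit a reduction is ruled out by the essential (linking) nature of the clasps, leaving exactly $N$ points. This lower bound is the only genuinely delicate point in the argument.

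Finally I would assemble the pieces. Starting from an unknotted $T_1$ and repeatedly applying the building block of step (b) with the pairs $(w_j,N_j)$ of step (a), I obtain a nested sequence of unknotted solid tori $\{T_j\}$ with $N(T_{j+1},T_j) = N_j$ and winding numbers $w_j$; shrinking the diameters to zero ensures that $K := \bigcap_j T_j$ is compact with $\{T_j\}$ as a standard basis. By the Warning following Proposition \ref{prop:cellular}, $K$ is not cellular (the $N_j$ are eventually positive because $N \neq 0$), so $K$ is a genuine toroidal set; it is unknotted because all the $T_j$ have genus zero; and by construction $\check{H}^1(K;\mathbb{Z}) = H$ and $\mathcal{N}(K) = N$, as required.
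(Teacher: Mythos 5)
Your overall architecture matches the paper's: an algebraic step producing compatible sequences $\{w_j\}$, $\{N_j\}$ (same parity, $w_j \le N_j$, direct limits $H$ and $N$), followed by a geometric realization with unknotted tori and an assembly via Proposition \ref{prop:cellular}. Your algebraic step is essentially identical to the paper's (grouping/discarding via Lemma \ref{lem:canonical} and Proposition \ref{prop:divisors}, then grouping the index sequence into blocks to dominate the winding numbers), and your building block with $a = (N+w)/2$ forward strands and $b = (N-w)/2$ clasped returning strands is the same configuration the paper uses in Lemma \ref{lem:torus}, where the curve is the connected sum of $w$ monotone unknots and $k = b$ Whitehead curves.

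However, there is a genuine gap exactly where you flag it: the lower bound $N(V,W) \ge N$, i.e.\ that no meridional disk of $W$ can meet $\gamma$ in fewer than $a+b$ points. You state that you ``expect to establish this by an innermost-disk argument,'' but no such argument is given, and this is not a routine general-position exercise: lower bounds for the geometric index are the hard part of this subject (even the statement that a single Whitehead clasp forces index $2$ rather than $0$ is a nontrivial theorem, essentially equivalent to the non-cellularity of the Whitehead continuum, and it cannot be detected by the winding number, which vanishes). Your sketch does not identify which surfaces the innermost-disk argument would run against, nor why a clasp cannot be undone at the cost of introducing intersections elsewhere that are later cancelled. The paper resolves precisely this point by arranging the curve so that its intersection with each of two chambers of $V_0$ consists only of spanning arcs and Whitehead clasps, and then invoking the machinery of Andrist, Garity, Repov\v{s} and Wright (\cite[Corollary 4.6, p.~237]{and-gar-rep}), which computes the geometric index from such a chamber decomposition. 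Without this (or an equivalent argument, e.g.\ Schubert-type index computations), your construction only yields $N(V,W) \le N$, and then $\mathcal{N}(K)$ could a priori be a proper quotient-like degeneration of the intended limit, so conclusion (ii) would not follow. To complete your proof you should either cite that result, as the paper does, or supply a genuine proof that each clasp contributes an irreducible pair of intersection points.
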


\begin{corollary} \label{cor:construct} Let $H$ be a feasible group and let $P$ be a set of prime numbers. Assume that the consistency condition holds: $2 \in P$ if and only if $2 | H$. Then there exists a toroidal set $K$ such that:
\begin{itemize}
	\item[(i)] $\check{H}^1(K;\mathbb{Z}) = H$.
	\item[(ii)] The prime divisors of $\mathcal{N}(K)$ are precisely the elements of $P$.
	\item[(iii)] $K$ is unknotted.
\end{itemize}
\end{corollary}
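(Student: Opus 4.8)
The plan is to deduce the corollary directly from Theorem \ref{teo:construct} by producing, from the given data $H$ and $P$, a suitable feasible group $N$ that (a) satisfies the hypotheses of the theorem and (b) has exactly the prescribed set $P$ of prime divisors. Once such an $N$ is found, the theorem hands us a toroidal set $K$ with $\check{H}^1(K;\mathbb{Z}) = H$, with $\mathcal{N}(K) = N$, and unknotted; items (i) and (iii) of the corollary are then immediate, and item (ii) follows because the prime divisors of $\mathcal{N}(K) = N$ are precisely the elements of $P$ by construction.

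The key step is therefore the construction of $N$. First I would invoke Example \ref{ex:p_div}, which shows that for any prescribed set $P$ of primes there is a feasible group $G$ whose prime divisors are exactly $P$; take $N := G$. I must then check that $N$ meets the two standing hypotheses of Theorem \ref{teo:construct}, namely $N \neq 0$ and $N \not\sim 1$. To guarantee $N \neq 0$ one should pick the defining multipliers $m_j$ of the direct sequence \eqref{eq:dir_lim_Z} all nonzero (the constructions in Example \ref{ex:p_div} already do this, using products of primes or consecutive primes), so that alternative (i) of Lemma \ref{lem:canonical} never occurs. For $N \not\sim 1$ one needs $N \neq \mathbb{Z}$, i.e.\ the multipliers must not eventually equal $1$; again the explicit choices in Example \ref{ex:p_div} have all $m_j \geq 2$, so Lemma \ref{lem:canonical}(iii) applies and $N$ is not finitely generated, hence $N \neq \mathbb{Z}$ and $N \not\sim 1$.

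Next I would verify that the consistency condition of Theorem \ref{teo:construct} transfers correctly. The theorem requires $2 | H$ if and only if $2 | N$. By Proposition \ref{prop:divisors}, $2 | N$ holds precisely when $2$ is one of the prime divisors of $N$, which by construction means $2 \in P$. The corollary's hypothesis is exactly $2 \in P$ if and only if $2 | H$. Chaining these equivalences gives $2 | N \iff 2 \in P \iff 2 | H$, so the consistency condition needed by the theorem is satisfied. With all hypotheses of Theorem \ref{teo:construct} verified for this $N$, the theorem produces the desired $K$.

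I expect the construction itself to be routine; the only point requiring a little care is the edge case $P = \emptyset$. Here Example \ref{ex:p_div}(i) produces an $N$ with no prime divisors by taking $m_j$ to be the $j$th prime, so the multipliers are all $\geq 2$ and $N \not\sim 1$ still holds (recall that $\mathbb{Z}$ is the unique number-like group with no prime factors, and our $N$ is not $\mathbb{Z}$). The consistency condition is also fine in this case, since $2 \notin P = \emptyset$ forces $2 \nmid H$ by hypothesis, matching $2 \nmid N$. Thus the argument covers all admissible $P$ uniformly, and the corollary follows.
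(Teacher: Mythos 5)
Your proposal is correct and follows exactly the paper's route: the paper's proof is the one-liner ``Apply the preceding theorem with $N$ as constructed in Example \ref{ex:p_div}.'' Your additional verifications (that the choices in Example \ref{ex:p_div} give $N \neq 0$ and $N \not\sim 1$ via Lemma \ref{lem:canonical}, and that the consistency condition chains through Proposition \ref{prop:divisors}) are just the details the paper leaves implicit, handled correctly.
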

\begin{proof} Apply the preceding theorem with $N$ as constructed in Example \ref{ex:p_div}.
\end{proof}

For genus $g > 0$, and recalling again that we are interested in sets that are not weakly tame, Theorem \ref{teo:carac2} requires that $H = 0$. In particular $2 | H$, and so we must also require that $2 |N$. Our understanding of this case is much more limited, and we have only been able to produce examples with genus $g = 1$:

\begin{theorem} \label{teo:construct_trivial} Let $N \neq 0$ be a feasible group such that $2 | N$. Then there exists a toroidal set $K$ such that:
\begin{itemize}
	\item[(i)] $K$ is trivial.
	\item[(ii)] $\mathcal{N}(K) = N$.
	\item[(iii)] The genus of $K$ is $1$.
\end{itemize}
\end{theorem}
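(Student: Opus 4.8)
The plan is to realize $N$ by a nested sequence $\{T_j\}$ in which the winding number of each $T_{j+1}$ inside $T_j$ is \emph{zero} while the geometric index $N(T_{j+1},T_j)$ is a prescribed even integer. The vanishing of all the winding numbers will immediately force $\check{H}^1(K;\mathbb{Z})=0$, so $K$ is trivial, giving (i), through the direct-sequence description of \v{C}ech cohomology; and the geometric indices will be chosen so that the direct limit of $\mathcal{N}\{T_j\}$ is $N$, giving (ii). First I would normalize $N$: since $2\mid N$ we have $N\neq 0,\mathbb{Z}$, so by Lemma \ref{lem:canonical} we may present $N$ as the limit of $\mathbb{Z}\xrightarrow{\,\cdot a_1\,}\mathbb{Z}\xrightarrow{\,\cdot a_2\,}\cdots$ with every $a_i\geq 2$, and by Proposition \ref{prop:divisors} the prime $2$ divides infinitely many $a_i$. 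Grouping the arrows so that each block contains at least one even factor and composing, I may assume without changing the limit that every structure map is an \emph{even} number $N_j=2m_j$ with $m_j\geq 1$. It is exactly these even integers that I must realize as geometric indices with vanishing winding.

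The heart of the construction is a single building block. Recall that the winding number equals the algebraic intersection of a core curve with a meridional disk, so any simple closed curve that bounds an orientable surface inside a solid torus $V$ automatically has winding number $0$. I would therefore take the pattern to be the boundary $\gamma$ of a once-punctured torus $F\subseteq \operatorname{int} V$ (a disk with two bands attached), which is a genus-one Seifert surface, and route the two bands of $F$ around the longitude of $V$ in opposite senses. Winding the bands far enough makes $\gamma$ cross a meridional disk many times while keeping the \emph{algebraic} count at zero; the bands can be arranged so that the resulting geometric index is exactly the prescribed value $2m_j$ and so that $\gamma$ is a nontrivial genus-one knot. I would then build the sequence inductively: starting from a solid torus $T_1$, and given $T_j$ with core $c_j$, I place a standard copy of the pair $(V,\gamma)$ as a regular neighbourhood of $c_j$ and let $T_{j+1}$ be the solid torus determined by the pattern, shrinking everything so that $\operatorname{diam} T_j\to 0$. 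By Lemma \ref{lem:rel_hom} the geometric index $N(T_{j+1},T_j)$ equals $2m_j$, and the winding number is $0$ at every stage.

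With the block in hand the easy properties fall out. Triviality (i) follows because all winding numbers vanish, so the direct sequence computing $\check{H}^1(K;\mathbb{Z})$ has zero maps and limit $0$. Property (ii) holds because $\mathcal{N}\{T_j\}$ is exactly $\mathbb{Z}\xrightarrow{\cdot 2m_1}\mathbb{Z}\xrightarrow{\cdot 2m_2}\cdots$, whose limit is $N$ by construction, invoking Proposition \ref{prop:indep}; that $K$ is not cellular is automatic from Proposition \ref{prop:cellular} since each $N_j\geq 2$. For the genus, the upper bound $g(K)\leq 1$ is where the winding-zero design pays off: because $\gamma$ bounds the genus-one surface $F$ inside $V$, the core of $T_{j+1}$ bounds the image of $F$ inside $\mathbb{R}^3$ no matter how $V$ is knotted, so $g(T_{j+1})\leq 1$; the companion knotting of $c_j$ does not inflate the genus precisely because the winding number is $0$. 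Hence $K$ has arbitrarily small solid-torus neighbourhoods of genus $\leq 1$, and $g(K)\leq 1$.

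The two genuinely hard points, which I expect to require the most work, are both lower bounds. The first is verifying that the block has geometric index \emph{exactly} $2m_j$: exhibiting a meridional disk meeting $\gamma$ in $2m_j$ points is routine, but showing that no meridional disk does better is the delicate direction and should be argued from the incompressibility of the genus-one spanning surface $F$, in the spirit of Schubert's estimates (\cite{schubert}). The second, subtler obstacle is the lower bound $g(K)\geq 1$, i.e. that $K$ is genuinely \emph{not} unknotted. This cannot be taken for granted: for $N=\mathbb{Z}[\nicefrac{1}{2}]$ the Whitehead continuum is a trivial, unknotted toroidal set with the \emph{same} self-index, so the genus-one conclusion must be forced by the specific knotting of the blocks rather than by $H$ and $N$ alone. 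I would establish it by showing that the genus-one constituents remain essential in the limit, so that no neighbourhood basis of unknotted tori can exist, using the genus-computation techniques for toroidal sets developed in \cite{hecyo1}. Combining the two bounds yields $g(K)=1$, completing (iii).
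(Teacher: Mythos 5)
Your overall construction follows the same idea as the paper's: realize each even index $n_j = 2m_j$ by a winding-number-zero pattern whose core bounds a genus-one Seifert surface inside the ambient torus (the paper's block is exactly the special case of a Whitehead curve), so that triviality and $g(K)\leq 1$ come for free. However, the two steps you yourself flag as ``genuinely hard'' are genuine gaps, and the paper closes both with ideas your proposal lacks. For the index lower bound, the paper never has to compute the index of a complicated $2m_j$-strand pattern: it interpolates a Whitehead torus $W_j$ between consecutive stages ($W_j \subseteq {\rm int}\ T_j$ in the Whitehead pattern, then $T_{j+1}$ winding monotonically $n_j/2$ times inside $W_j$), and multiplicativity gives $N(T_{j+1},T_j) = N(T_{j+1},W_j)\cdot N(W_j,T_j) = (n_j/2)\cdot 2 = n_j$; the only nontrivial index ever needed is that of the single Whitehead clasp, which is supplied by Lemma \ref{lem:torus} via the chamber technique of \cite{and-gar-rep}. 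Your plan to prove that your custom block has index exactly $2m_j$ ``from incompressibility \dots in the spirit of Schubert'' is left entirely unexecuted, and it is not routine.

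The more serious gap is the lower bound $g(K)\geq 1$. You correctly observe (via the Whitehead continuum) that it cannot follow from $H$ and $N$ alone, but your proposed fix --- ``the genus-one constituents remain essential in the limit,'' using the genus-computation techniques of \cite{hecyo1} --- is not an argument; moreover, those techniques rely on nonvanishing winding numbers, which is precisely what your construction destroys. The paper's missing idea is different and elementary: take the \emph{outermost} torus $T_1$ nontrivially knotted (the pattern curves may then remain unknotted). If $K$ were unknotted, one could interpolate an unknotted torus $U$ with $T_j \subseteq U \subseteq T_1$ for large $j$; since $N(T_j,T_1) = n_1\cdots n_{j-1}\neq 0$, multiplicativity forces $N(U,T_1)\neq 0$, so the unknot would be a satellite of a nontrivial knot, which is impossible (\cite[Corollary 10, p. 113]{rolfsen1}). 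Incidentally, your insistence that the pattern curve $\gamma$ be a nontrivial genus-one knot would make $T_2$ knotted, so the same satellite argument could be run from $T_2$ in your setting --- but it is that argument, not any limit of genera, that actually closes the gap, and your proposal does not contain it.
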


\begin{corollary} \label{cor:construct_trivial} Let $P$ be a set of prime numbers that contains $2$. Then there exists a toroidal set $K$ such that:
\begin{itemize}
	\item[(i)] $K$ is trivial.
	\item[(ii)] The prime divisors of $\mathcal{N}(K)$ are precisely the elements of $P$.
	\item[(iii)] The genus of $K$ is $1$.
\end{itemize}
\end{corollary}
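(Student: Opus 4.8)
The plan is to deduce Corollary \ref{cor:construct_trivial} directly from Theorem \ref{teo:construct_trivial}, exactly as Corollary \ref{cor:construct} was deduced from Theorem \ref{teo:construct}. The task reduces to producing, from the prescribed set $P$ of primes (which contains $2$), a feasible group $N$ such that $N \neq 0$, $2 \mid N$, and the prime divisors of $N$ are precisely the elements of $P$; once such an $N$ is in hand, Theorem \ref{teo:construct_trivial} immediately yields a trivial toroidal set $K$ of genus $1$ with $\mathcal{N}(K) = N$, and by Proposition \ref{prop:divisors} the prime divisors of $\mathcal{N}(K)$ are exactly those of $N$, namely $P$.

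First I would invoke Example \ref{ex:p_div} to construct a feasible group $G$ whose prime divisors are precisely the elements of $P$. This is the crux of the reduction: Example \ref{ex:p_div} already handles every case, producing a direct sequence \eqref{eq:dir_lim_Z} with nonnegative integer multipliers $m_j$ whose limit $G$ has prime divisor set exactly $P$. Since $P$ is guaranteed to be nonempty (it contains $2$), the relevant cases are (ii) and (iii) of that example, in both of which all the $m_j$ are $\geq 2$, so by Lemma \ref{lem:canonical}(iii) the resulting $G$ is not finitely generated; in particular $G \neq 0$.

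Next I would verify the two remaining hypotheses of Theorem \ref{teo:construct_trivial}. That $N := G$ is feasible holds by its very construction as a direct limit of the form \eqref{eq:dir_lim_Z}. That $2 \mid N$ follows from the assumption $2 \in P$ together with the defining property that the prime divisors of $N$ are precisely the elements of $P$. Thus $N$ satisfies $N \neq 0$ and $2 \mid N$, which are exactly the hypotheses of Theorem \ref{teo:construct_trivial}.

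I do not expect any genuine obstacle here, since all the real work is carried by Theorem \ref{teo:construct_trivial} and by the constructions of Example \ref{ex:p_div}. The only point requiring a moment's care is the bookkeeping that the prime divisors of the self-index of the output set $K$ coincide with $P$: this is immediate from Proposition \ref{prop:divisors}, which translates the group-theoretic notion $p \mid \mathcal{N}(K)$ into the divisibility of infinitely many geometric indices, and hence matches the prime divisors of $\mathcal{N}(K) = N$ with those of $N$. The genus and triviality conclusions are simply inherited verbatim from Theorem \ref{teo:construct_trivial}, so the proof is a one-line appeal once $N$ has been selected.
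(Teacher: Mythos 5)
Your proof is correct and follows exactly the paper's own route: the paper's proof of this corollary is the one-line "Apply the preceding theorem with $N$ as constructed in Example \ref{ex:p_div}," which is precisely your reduction. Your additional verifications (that the $N$ from Example \ref{ex:p_div} is feasible, nonzero because all $m_j \geq 2$ when $2 \in P$, and divisible by $2$) are the implicit bookkeeping the paper leaves to the reader, carried out correctly.
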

\begin{proof} Apply the preceding theorem with $N$ as constructed in Example \ref{ex:p_div}.
\end{proof}

To prove these theorems we need an auxiliary lemma which provides the key to the construction:

\begin{lemma} \label{lem:torus} Let $w$ and $k$ be a pair of nonnegative integers. Denote by $V_0 \subseteq \mathbb{R}^3$ the standard unknotted solid torus. Then there exists an unknotted solid torus $V_1 \subseteq {\rm int}\ V_0$ such that the winding number of $V_1$ inside $V_0$ is $w$ and the geometric index $N(V_1,V_0)=w+2k$. 
\end{lemma}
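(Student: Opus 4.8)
The plan is to build $V_1$ explicitly as a thin regular neighbourhood of a concrete unknotted core curve $\gamma \subseteq \operatorname{int} V_0$, assembled from two kinds of elementary pieces. First I would fix a product structure $V_0 \cong \mathbb{D}^2 \times \mathbb{S}^1$ and regard the longitude circle $\mathbb{S}^1$ as the direction along which $\gamma$ travels. The building block that produces \emph{winding} is a strand running monotonically once around $\mathbb{S}^1$; placing $w$ of these in series gives a curve meeting every meridional disk transversally in $w$ points of the same sign, which realizes winding number $w$ and monotone geometric index $w$. The block that raises the geometric index without altering the winding number is a \emph{clasp} of Whitehead type: a pair of anti-parallel strands that run forward and back along the longitude but are hooked together so that they cannot be pulled apart. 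Inserting $k$ such clasps into the curve, in series, produces $\gamma$.

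Once $\gamma$ is in place the routine verifications are: (a) $V_1$ is unknotted, which I arrange by drawing each clasp as a local, planar gadget so that $\gamma$ is isotopic in $\mathbb{R}^3$ to a round circle (hence bounds an embedded disk); (b) the winding number is $w$, since the algebraic intersection with a meridional disk counts the $w$ monotone strands with sign $+1$ while each clasp contributes a $+1$ and a $-1$ that cancel; and (c) the upper bound $N(V_1,V_0)\le w+2k$, exhibited by the evident meridional disk meeting each of the $w$ monotone strands once and each clasp twice. All of this is consistent with the general constraints recalled in Section \ref{sec:prelim}, namely $w\le w+2k$ and $w\equiv w+2k \pmod 2$. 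The degenerate case $w=k=0$ gives $N=0$, i.e.\ $V_1$ sits inside a $3$--cell (Proposition \ref{prop:cellular}), and is handled by a tiny unknotted torus in a ball.

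The hard part will be the reverse inequality $N(V_1,V_0)\ge w+2k$, i.e.\ showing that no meridional disk meets $\gamma$ in fewer than $w+2k$ points. The naive worry is that a forward-and-back excursion can always be pushed off a meridional disk, lowering the count by two; the entire point of using genuine clasps is to block this. I would make this precise by a minimal-position argument: take a meridional disk $D$ meeting $\gamma$ transversally and with $|D\cap\gamma|$ minimal, and observe that any cancelling pair of oppositely-signed intersection points would have to cobound a bigon (a subarc of $\gamma$ together with a subdisk of $D$ whose interior misses $\gamma$), which one could then remove to lower the count, contradicting minimality. Each clasp is designed precisely so that its two strands are linked and bound no such bigon, so its two intersections survive every reduction; after all cancellations are exhausted exactly the $w$ monotone strands and the $2k$ clasp strands remain, giving $w+2k$. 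A convenient way to package this is to verify that the geometric index is additive under splicing of patterns along a meridional disk (cut $V_0$ into a cylinder $\mathbb{D}^2\times[0,1]$ and stack the $w$ single-winding pieces and the $k$ clasp pieces in series): the minimal width of a spliced reduced tangle is the sum of the widths, and the Whitehead clasp has width exactly $2$. I expect this irreducibility/additivity step to be the main obstacle, while the remaining steps amount to an explicit drawing.
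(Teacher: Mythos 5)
Your construction coincides with the paper's: the curve $\gamma$ is assembled from $w$ monotonically winding strands and $k$ Whitehead clasps (the paper takes the connected sum of $w$ monotone unknots and $k$ Whitehead curves, with orientations chosen so that each connected sum introduces a twist rather than a turning point), and $V_1$ is then a regular neighbourhood of $\gamma$; the unknottedness, the winding number computation, and the upper bound $N(V_1,V_0)\le w+2k$ go exactly as you indicate. The gap is in the lower bound $N(V_1,V_0)\ge w+2k$, which is the only genuinely hard point. Your minimal-position/bigon argument does not work: the bigon criterion is a two-curves-on-a-surface phenomenon, and there is no analogue for the intersection of a curve with a disk inside a $3$--manifold. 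A meridional disk of $V_0$ is not constrained to stay near the obvious one; an isotopy can rearrange the intersection pattern globally, and the absence of a bigon for a given cancelling pair does not prevent that pair from being removed by some other isotopy of the disk. Already the base case $w=0$, $k=1$ illustrates this: showing $N=2$ for a single clasp amounts to showing that the Whitehead curve does not lie in any $3$--cell inside $V_0$ (by Schubert, $N=0$ if and only if the curve lies in a cell), i.e.\ the nontriviality of the Whitehead link, and no counting of bigons yields that; it requires genuine input (Alexander invariants, fundamental group arguments, or the like).

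The ``additivity under splicing'' that you correctly identify as the right packaging is precisely the theorem that the paper invokes rather than reproves: the result of Andrist, Garity, Repov\v{s} and Wright \cite[Corollary 4.6, p. 237]{and-gar-rep}, which states that if a curve meets the chambers determined by two meridional disks only in spanning arcs and Whitehead clasps, then its geometric index equals the total number of strands, here $w+2k$. Since your proposal explicitly leaves this step as ``the main obstacle'' without proving it, the argument is incomplete exactly where the work lies. Note also a small but necessary point of care that your sketch glosses over: when the blocks are placed in series, the connecting arcs must be monotone (this is why the paper chooses alternating orientations for the Whitehead curves, so that the connected sums produce twists and not turn-backs); otherwise the intersection with one of the chambers fails to consist of spanning arcs and clasps, and the cited corollary would not apply.
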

\begin{proof} To prove the lemma it suffices to find an unknotted simple closed curve $\gamma$ in ${\rm int}\ V_0$ such that its winding number and geometric index in $V_0$ are $w$ and $w + 2k$ respectively, since then any regular neighbourhood $V_1$ of $\gamma$ contained in ${\rm int}\ V_0$ will fulfill our requirements.

Figure \ref{fig:whitehead}.(a) shows a view of $V_0$ from the top and, contained in its interior, two groups of oriented simple closed curves. The outermost group, labeled ({\bf 1}), consists of $w$ curves $\gamma_1,\ldots,\gamma_w$ that wind monotonically once around $V_0$. The innermost group ({\bf 2}) consists of $k$ curves $\gamma_{w+1},\ldots,\gamma_{w+k}$ which we call \emph{Whitehead curves} because they are patterned after one of the components of the Whitehead link. All the curves in the first group have the same orientation. The Whitehead curves have alternating orientations, chosen in such a way that the outermost strand of the outermost Whitehead curve runs parallel to the innermost curve of group ({\bf 1}).

\begin{figure}[h!]
\null\hfill
\subfigure[Initial setup]{
\begin{pspicture}(0,0)(6.6,6.6)
	\rput[bl](0,0){\scalebox{0.8}{\includegraphics{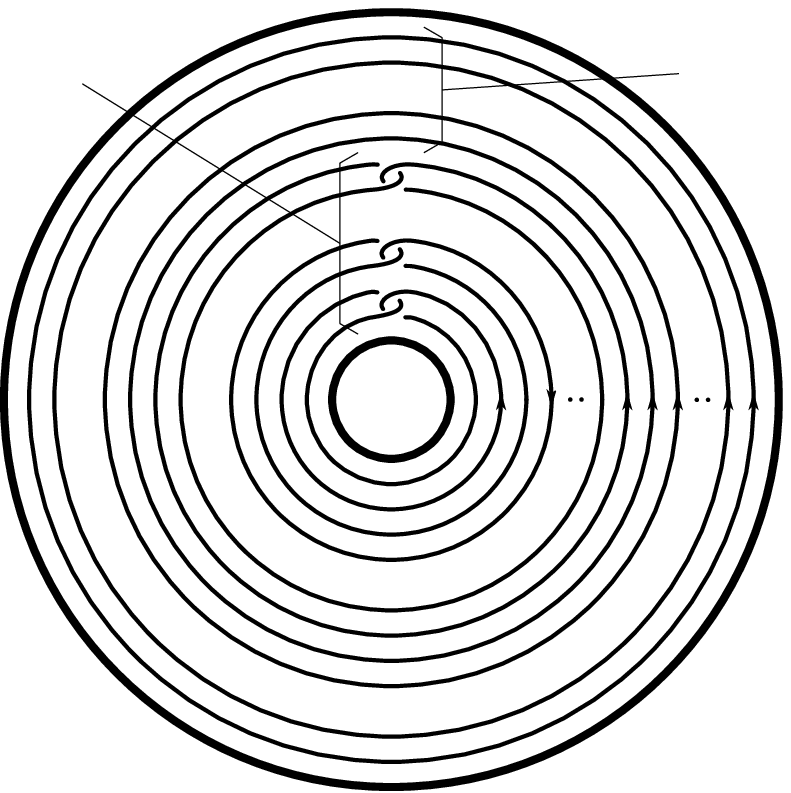}}}
	\rput[bl](5.2,0.2){$V_0$}
	\rput[l](5.6,5.8){\tiny{({\bf 1})}}
	\rput[r](0.6,5.8){\tiny{({\bf 2})}}
\end{pspicture}}
\hfill
\subfigure[Final setup]{
\begin{pspicture}(0,0)(6.6,6.6)
	\rput[bl](0,0){\scalebox{0.8}{\includegraphics{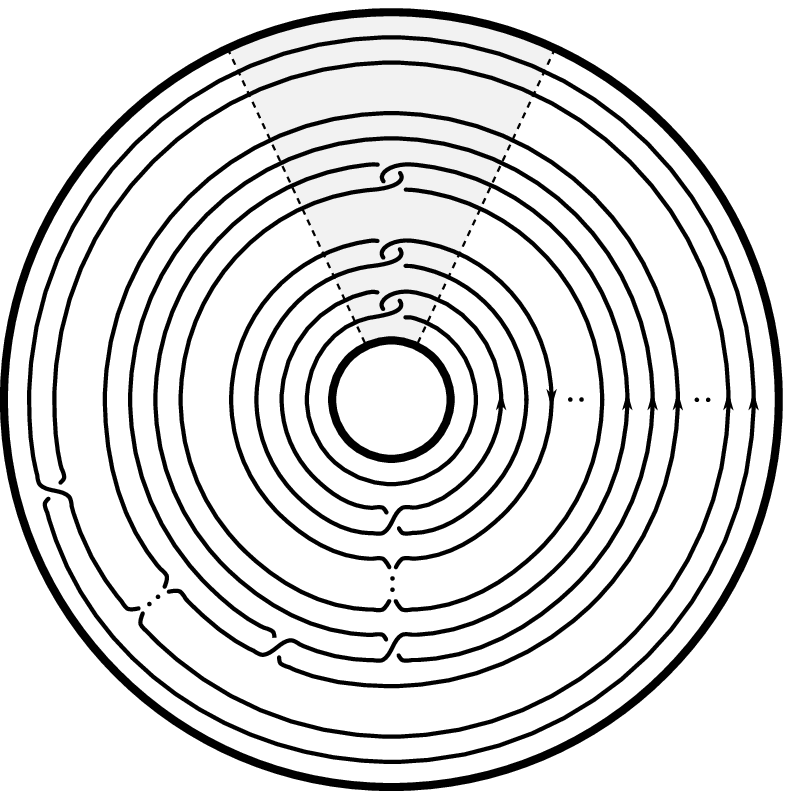}}}
	\rput[bl](5.2,0.2){$V_0$}
\end{pspicture}}
\hfill\null
\caption{ \label{fig:whitehead}}
\end{figure}

Let $\gamma$ be the result of taking the (oriented) connected sum of all the $\gamma_j$ arranged as shown in Figure \ref{fig:whitehead}.(b). The choice of orientations ensures that in performing the connected sum of each curve with the adjacent one we include a twist rather than having a turning point where $\gamma$ would double back over.

Since each $\gamma_j$ is the unknot, the same is true of $\gamma$. It is also clear from the construction that each $\gamma_j$ with $j \leq w$ has winding number $1$ in $V_0$ while each $\gamma_j$ with $j>w$ has winding number $0$, so that $\gamma$ has winding number $w$ in $V_0$. It only remains to check that the geometric index of $\gamma$ in $V_0$ is precisely $2k+w$. To see this we apply a technique of Andrist, Garity, Repov\v{s} and Wright \cite{and-gar-rep}. The dotted radial lines in Figure \ref{fig:whitehead}.(b) represent two meridional disks which decompose the solid torus $V_0$ into two sectors $C_1$ and $C_2$, where $C_1$ is shaded light gray. Sectors of this form are called chambers in \cite{and-gar-rep}. For $1 \leq j \leq w$ the intersection $\gamma_j \cap C_1$ just consists of an arc that runs from one of the meridional disks to the other without turning back over. Arcs of this form are called spanning arcs. For $w+1 \leq j \leq w+k$ the intersection $\gamma_j \cap C_1$ consists of a pair of linked arcs; these are called Whitehead clasps (see \cite[Figure 5.(b), p. 9]{and-gar-rep}). The intersection of every $\gamma_j$ with the chamber $C_2$ just consists of one or two spanning arcs. In this situation \cite[Corollary 4.6, p. 237]{and-gar-rep} ensures that the geometric index $N(\gamma,V_0)=w+2k$.
\end{proof}

\begin{proof}[Proof of Theorem \ref{teo:construct}] Since $H$ and $N$ are feasible, both can be written as direct limits of the form \[H = \varinjlim\ \{ \xymatrix{\mathbb{Z} \ar[r]^{w_j} & \mathbb{Z}} \} \quad \text{ and } \quad N = \varinjlim\ \{ \xymatrix{\mathbb{Z} \ar[r]^{n_j} & \mathbb{Z}}\}\] where the $\{w_j\}$ and $\{n_j\}$ are as described in Lemma \ref{lem:canonical}. Of the cases considered there we must have $n_j \geq 2$ for all $j$ because $N \neq 0$ and $N \not\sim 1$ by assumption. 

Suppose first that $ 2 | H$. By Proposition \ref{prop:divisors} we see that $w_j$ must be even for infinitely many $j$, and by grouping the $w_j$ into blocks each of which contains at least an even arrow we may assume that all the $w_j$ are even. By the consistency condition of the theorem $2 | N$ too, and by the same argument we may take $n_j$ to be even for all $j$. A similar reasoning applies when $2 \not| H$ (this time discarding a finite number of the $w_j$ and $n_j$ instead of grouping arrows as before), and in that case we may take all the $w_j$ and $n_j$ to be odd. Summing up, we can assume without loss of generality that all the $w_j$ and $n_j$ have the same parity.

Now we claim that we may also assume that $w_j \leq n_j$ for every $j$. For, consider $w_1$. Since $n_j \geq 2$ for every $j$, by taking a product $n_1 \cdot \ldots \cdot n_{k_1}$ for sufficiently large $k_1$ we can achieve $w_1 \leq n_1 \cdot \ldots \cdot n_{k_1}$. Similarly, taking $k_2 > k_1$ big enough we can achieve $w_2 \leq n_{k_1+1} \cdot \ldots \cdot n_{k_2}$, and so on. That is, by grouping the arrows in the defining sequence for $N$ we can certainly assume that $w_j \leq n_j$ for every $j$. Notice that this does not change the parity of the $n_j$.

Our choice of the $w_j$ and $n_j$ ensures that the equation $2k_j + w_j = n_j$ has an integer solution $k_j \geq 1$ for each $j$. Then by Lemma \ref{lem:torus} we may construct a nested sequence of unknotted solid tori $\{T_j\}$ such that the winding number and the geometric index of $T_{j+1}$ inside $T_j$ are precisely $w_j$ and $n_j$ respectively. Let $K := \bigcap_{j \geq 0} T_j$. Proposition \ref{prop:cellular} shows that $K$ is not cellular; hence, it is a toroidal set. It is clear that $K$ is unknotted and has $\check{H}^1(K;\mathbb{Z}) = H$ and $\mathcal{N}(K) = N$ as required.
\end{proof}

\begin{proof}[Proof of Theorem \ref{teo:construct_trivial}] Write $N$ in the form \[N = \varinjlim\ \{\xymatrix{\mathbb{Z} \ar[r]^{\cdot n_j} & \mathbb{Z}}\}.\]  Since $2|N$, arguing as in the proof of Theorem \ref{teo:construct} we may assume that all the $n_j$ are even; moreover, since $N \neq 0$ we may also assume $n_j \geq 2$ for every $j$ (see Lemma \ref{lem:canonical}). 

We construct the required toroidal set as the intersection of a nested family of tori $\{T_j\}$ as follows. Let $V$ be the standard unknotted torus in $\mathbb{R}^3$ and let $W \subseteq {\rm int}\ V$ be another solid torus arranged as a Whitehead curve (that is, as the innermost curve in Figure \ref{fig:whitehead}.(a)). Now start with a torus $T_1 \subseteq \mathbb{R}^3$ knotted in a nontrivial way. Let $W_1 \subseteq {\rm int}\ T_1$ be another solid torus which lies inside $T_1$ according to the pattern $(V,W)$. Now let $T_2 \subseteq {\rm int}\ W_1$ be a third solid torus that winds monotonically $n_1/2$ times inside $W_1$ (as in the generalized solenoids). Let again $W_2 \subseteq {\rm int}\ T_2$ be a solid torus which lies inside $T_2$ in the pattern $(V,W)$ and then let $T_3 \subseteq {\rm int}\ W_2$ wind monotonically $n_2/2$ times inside $W_2$. Continue in this fashion and define $K := \bigcap_j T_j$. Notice that both $\{T_j\}$ and $\{W_j\}$ are standard bases for $K$.

Since the winding number of $W$ inside $V$ is zero, the same is true of every pair $W_j \subseteq T_j$. Thus $\check{H}^1(K;\mathbb{Z}) = 0$. Notice that $N(T_{j+1},T_j) = N(T_{j+1},W_j) \cdot N(W_j,T_j) = (n_j/2) \cdot 2 = n_j$ by construction. This implies that $K$ is indeed a toroidal set (because it is not cellular by Proposition \ref{prop:indep}) and also that $\mathcal{N}(K) = N$ by Proposition \ref{prop:indep}.

Finally we show that $g(K) = 1$. This is very similar to the proof that the Whitehead double of a knot has genus one. First, since the core curve of $W$ bounds a Seifert surface of genus $1$ inside the torus $V$, each $W_j$ also bounds a Seifert surface of genus $1$ and therefore $g(K) \leq 1$ because the $W_j$ form a neighbourhood basis of $K$. To prove that $g(K) > 0$, suppose on the contrary that $K$ were unknotted. Then $K$ would have a neighbourhood basis of unknotted tori, and in particular we could interpolate an unknotted torus $U$ between $T_1$ and $T_j$ for some big enough $j$. Then $N(T_j,T_1) = N(T_j,U) \cdot N(U,T_1)$ and, since $N(T_j,T_1) = N(T_j,T_{j-1}) \cdot \ldots \cdot N(T_2,T_1) = n_{j-1} \cdot \ldots \cdot n_1 \neq 0$ by the computations of the previous paragraph, $N(U,T_1)$ is also nonzero. In particular $U$ is a satellite of $T_1$, but this is impossible: the unknot cannot be a satellite of a nontrivial knot (see \cite[Remark before Proposition 3.12, p. 39]{burdezieschang1} or \cite[Corollary 10, p. 113]{rolfsen1}).
\end{proof}

\section{Application to dynamics: the realizability problem} \label{sec:dyn}

We finally turn to dynamics and the realizability problem for toroidal sets: finding a topological characterization of those toroidal sets $K \subseteq \mathbb{R}^3$ that can be realized as a (local) attractor for a flow or a homeomorphism of $\mathbb{R}^3$. In the discrete case we will only obtain partial results which, however, provide a substantial clarification of the analysis in \cite{hecyo1}. 

Let us recall some standard definitions from dynamics. We state them for the discrete case, but the case of flows is completely ananlogous. Let $f$ be a homeomorphism of $\mathbb{R}^3$. An \emph{attractor} for $f$ is a compact invariant set $K \subseteq \mathbb{R}^3$ which is stable in the sense of Lyapunov and attracts all points in some neighbourhood $U$ of $K$. The maximal neighbourhood $U$ with this property is called the \emph{basin of attraction} of $K$ and is an open invariant subset of $\mathbb{R}^3$. We denote it by $\mathcal{A}(K)$. A stable attractor not only attracts every point in its basin of attraction, but also every compact subset of it. More precisely, for every compact set $P \subseteq \mathcal{A}(K)$ and every neighbourhood $V$ of $K$ there exists $n_0$ such that $f^n(P) \subseteq V$ for every $n \geq n_0$. It is in this form that we will make use of the stability assumption.

We mentioned in the Introduction that the realizability problem for flows has a solution in terms of weak tameness. For toroidal sets, being weakly tame is equivalent to having finite genus and $\mathcal{N} \sim 1$ as established in Theorem \ref{teo:carac1}. Thus we have:

\begin{theorem} \label{teo:flows} A toroidal set $K \subseteq \mathbb{R}^3$ can be realized as an attractor for a flow if and only if it has finite genus and $\mathcal{N}(K) \sim 1$.
\end{theorem}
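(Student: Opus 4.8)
The plan is to reduce Theorem \ref{teo:flows} directly to the general realizability criterion for flows quoted in the Introduction, namely that a compactum in $\mathbb{R}^3$ can be realized as an attractor for a flow if and only if it is weakly tame \cite[Theorem 11]{mio5}, together with the internal characterization of weak tameness for toroidal sets supplied by Theorem \ref{teo:carac1}. Concretely, the chain of equivalences I would establish is: $K$ is realizable as an attractor for a flow $\iff$ $K$ is weakly tame $\iff$ $K$ has finite genus and $\mathcal{N}(K) \sim 1$. The first equivalence is the cited general result applied verbatim to the toroidal set $K$ (a toroidal set is in particular a compactum in $\mathbb{R}^3$, so the hypotheses of \cite[Theorem 11]{mio5} are met). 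The second equivalence is exactly the content of Theorem \ref{teo:carac1}. Combining the two yields the statement.

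The proof I would write is therefore short. First I would invoke \cite[Theorem 11]{mio5} to assert that $K$ can be realized as an attractor for a flow precisely when $K$ is weakly tame; this requires only that $K$ be a compact subset of $\mathbb{R}^3$, which holds by the definition of a toroidal set. Then I would apply Theorem \ref{teo:carac1}, which characterizes weak tameness for toroidal sets by the two conditions $g(K) < +\infty$ and $\mathcal{N}(K) \sim 1$. Substituting this characterization into the first equivalence immediately gives that $K$ is realizable as a flow attractor if and only if it has finite genus and $\mathcal{N}(K) \sim 1$, which is the desired conclusion.

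Since essentially all the genuine work has already been discharged — the topological realizability criterion in \cite{mio5} and the internal characterization in Theorem \ref{teo:carac1} — there is no substantial obstacle remaining at this point. The only minor thing to verify is that the general criterion applies without additional hypotheses to toroidal sets; but a toroidal set is by definition a compactum $K \subseteq \mathbb{R}^3$, so \cite[Theorem 11]{mio5} applies directly. I expect the whole argument to amount to little more than stitching together two previously established results, and I would phrase it as a one-paragraph proof making the two citations explicit.
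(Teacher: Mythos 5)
Your proposal is correct and matches the paper's own argument exactly: the paper derives Theorem \ref{teo:flows} by combining the general criterion of \cite[Theorem 11]{mio5} (a compactum in $\mathbb{R}^3$ is a flow attractor if and only if it is weakly tame) with the characterization of weak tameness for toroidal sets in Theorem \ref{teo:carac1}. The paper additionally notes in a remark a more elementary alternative via \cite[Theorem 3.11]{hecyo1} and Proposition \ref{prop:charac_wtame}, but your route is the one the paper actually takes.
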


\begin{remark} An alternative (and somewhat more elementary) proof can be obtained by resorting to \cite[Theorem 3.11, p.19]{hecyo1}, where it is shown that a toroidal set can be realized as an attractor for a flow if and only if it has a neighbourhood basis of concentric solid tori. This is equivalent to requiring that $K$ be weakly tame by Proposition \ref{prop:charac_wtame}.
\end{remark}

Now we turn to the case of homeomorphisms, where the realizability problem is much more difficult. In fact we cannot solve it completely, but by using the self-index we will be able to improve our understanding of it somewhat. Our main tool will be the following theorem:

\begin{theorem} \label{teo:main1} Let $K$ be a toroidal set that is an attractor for a homeomorphism $f$ of $\mathbb{R}^3$. Then $\mathcal{N}(K)$ is number-like. Moreover, $\mathcal{N}(K) \sim 1$ if and only if $K$ can be realized as an attractor for a flow.
\end{theorem}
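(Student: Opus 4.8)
The plan is to split Theorem \ref{teo:main1} into its two assertions and handle the number-likeness of $\mathcal{N}(K)$ first, since this is where the real work lies. The guiding idea is that an attractor forces a \emph{self-similar} neighbourhood basis, on which the geometric index is constant. Since $K$ is toroidal it has polyhedral solid torus neighbourhoods, and since $K$ is an attractor we may choose one, $T$, inside the basin $\mathcal{A}(K)$. The stability hypothesis, in the compact-attracting form recalled at the start of Section \ref{sec:dyn} (applied to the compact set $T$ and the neighbourhood ${\rm int}\ T$), yields an integer $n_0$ with $f^{n_0}(T) \subseteq {\rm int}\ T$. Writing $g := f^{n_0}$, the sets $g^j(T)$ form a nested family of solid tori whose intersection is exactly $K$ (the usual trapping-region description of a stable attractor). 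The crucial structural fact is that $g$ carries the pair $(g^j(T), g^{j+1}(T))$ onto the next pair $(g^{j+1}(T), g^{j+2}(T))$, so all consecutive geometric indices ought to equal the single value $m := N(g(T),T)$. Then the direct sequence $\mathcal{N}\{g^j(T)\}$ has every arrow equal to $m$, its limit is $\mathbb{Z}[\nicefrac{1}{m}]$, and $\mathcal{N}(K)$ is number-like.

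The difficulty is that this argument is not literally legal: the tori $g^j(T)$ are in general wild, so $N(g^{j+1}(T),g^j(T))$ is not even defined and Lemma \ref{lem:rel_hom} cannot be applied. I would remove this obstruction exactly as in the proof of Proposition \ref{prop:local}: using the approximation theorem of Moise \cite{moise2} (or Bing \cite{bing2}), replace $g$ on $\mathbb{R}^3 - K$ by a piecewise linear homeomorphism $\tilde{g}$ that agrees with $g$ on $K$ and is $C^0$-close to it. Because $\partial T$ lies in $\mathbb{R}^3 - K$ and $\tilde{g}$ preserves $\mathbb{R}^3 - K$, every boundary $\tilde{g}^j(\partial T)$ is a piecewise linear torus, so each $\tilde{g}^j(T)$ is a genuinely \emph{polyhedral} solid torus; choosing $\tilde g$ close enough guarantees $\tilde g(T)\subseteq {\rm int}\ T$ and hence a nested family. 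Each shell $\tilde g^j(T)-{\rm int}\ \tilde g^{j+1}(T)$ is disjoint from $K$, so $\tilde g$ is piecewise linear there, and Lemma \ref{lem:rel_hom} applied to the conjugating homeomorphism of pairs gives $N(\tilde g^{j+1}(T),\tilde g^j(T)) = N(\tilde g(T),T) =: m$ for every $j$. That $m\geq 1$ follows from Proposition \ref{prop:cellular}, since $K$ is not cellular.

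The one genuinely delicate point that remains, and which I expect to be the main obstacle, is to ensure that $\bigcap_j \tilde g^j(T) = K$, so that $\{\tilde g^j(T)\}$ is a standard basis for $K$ rather than for some larger $\tilde g$-invariant toroidal set $L \supseteq K$. The true trapping map $g$ has maximal invariant set $K$ in $T$, and this must be transferred to $\tilde g$; mere $C^0$-closeness only gives upper semicontinuity of the maximal invariant set, not equality. My intended fix is an iterative, rather than one-shot, construction: at each stage first shrink the current polyhedral torus slightly (an operation of index one, so as not to disturb $m$) to create margin, and then choose a piecewise linear approximation of $g$ good enough on the relevant compact shell that the new torus is again a trapping region \emph{and} lies strictly inside the true dynamical torus $g^{c_j}(T)$ with $c_j \to \infty$. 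Since the constructed tori are then squeezed below $\bigcap_j g^{c_j}(T) = K$, their intersection is forced down to $K$, while the constancy of the index and multiplicativity keep the self-index equal to $\mathbb{Z}[\nicefrac{1}{m}]$. This step is where the care in the choice of the approximations is concentrated.

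For the final equivalence the argument is short once Theorem \ref{teo:flows} is available. If $K$ can be realized as an attractor for a flow then, by Theorem \ref{teo:flows}, it has finite genus and $\mathcal{N}(K)\sim 1$, which gives one implication. Conversely, assume $K$ is an attractor for a homeomorphism and $\mathcal{N}(K)\sim 1$. By the result of \cite{hecyo1} recalled in the Introduction, a toroidal set realized as an attractor for a homeomorphism necessarily has finite genus; combining finite genus with the hypothesis $\mathcal{N}(K)\sim 1$ and the ``if'' direction of Theorem \ref{teo:flows} shows that $K$ can be realized as an attractor for a flow. Hence $\mathcal{N}(K)\sim 1$ is equivalent to flow-realizability, completing the proof.
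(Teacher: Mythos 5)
You have the right skeleton, and it is essentially the paper's: a self-similar basis $T_j := g^j(T)$ with $g$ a high iterate of $f$, constancy of the consecutive indices via Lemma \ref{lem:rel_hom}, basis-independence via Proposition \ref{prop:indep}, and the flow equivalence via finite genus together with Theorem \ref{teo:carac1}/Theorem \ref{teo:flows} (that last part of your proposal is correct as written). You also correctly isolate the crux: replacing $f$ by a homeomorphism that is piecewise linear off $K$ while ensuring the resulting tori still shrink exactly to $K$. But your proposal stops precisely at that crux, and the iterative scheme you sketch to fill it does not deliver the conclusion. If you use a different piecewise linear approximation $\tilde g_j$ at each stage, then the pairs $(T_j,T_{j+1})$ are no longer all conjugate to $(T_0,T_1)$ by a single map that is piecewise linear on the shells, so Lemma \ref{lem:rel_hom} gives you no reason for $N(T_{j+1},T_j)$ to equal a fixed $m$; your ``index one shrink'' only controls one factor in the product, not the index $N(\tilde g_j(T'),T')$ of the new image in the new torus, which depends on both $\tilde g_j$ and $T'$. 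Proposition \ref{prop:indep} says the direct limit is basis-independent; it does not upgrade a basis with varying indices to an eventually constant presentation, and eventual constancy is exactly what ``number-like'' means. So the step you defer as ``where the care is concentrated'' is the theorem itself.

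The paper's resolution (Lemma \ref{lem:attapprox}) is one-shot, and it shows that your semicontinuity worry dissolves once the approximation tolerance is allowed to be \emph{non-uniform}: Moise's theorem approximates $f^2$ on $\mathbb{R}^3 - K$ within any gauge $\phi$ that is merely bounded away from zero on compacta of the complement. Taking trapping sets $P_k := f^k(P)$ with $f(P)\subseteq {\rm int}\ P$, numbers $\phi_k \downarrow 0$ with $P_{k+1}+\phi_k \subseteq P_k$, and setting $\phi := \phi_{k+1}$ on $P_k - P_{k+1}$, the piecewise linear $\phi$--approximation $g$ of $f^2$ satisfies $g(P_k - K) \subseteq f^2(P_k-K)+\phi_{k+1} \subseteq P_{k+2}+\phi_{k+1} \subseteq P_{k+1}$; extending by $f^2$ on $K$ yields a homeomorphism $\hat g$ of $\mathbb{R}^3$, piecewise linear off $K$, with $\hat g(P_k)\subseteq P_{k+1}$, so $K$ is genuinely an attractor for $\hat g$ --- the maximal invariant set is squeezed to $K$ by construction, with no iteration needed. (Note the use of $f^2$ rather than $f$: the extra level of nesting absorbs the $\phi_{k+1}$--fattening; approximating $f$ itself only gives $\hat g(P_k)\subseteq P_k$, i.e.\ positive invariance but not attraction.) With this single map your first two paragraphs then run verbatim: the tori $\hat g^{nj}(T)$ are polyhedral, shrink to $K$ by stability, and all consecutive indices equal $N(T_1,T_0)$ by Lemma \ref{lem:rel_hom}, whence $\mathcal{N}(K)\cong \mathbb{Z}[\nicefrac{1}{m}]$.
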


Before proving the theorem we include some corollaries. A result of G\"unther \cite[Theorem 1, p. 653]{gunther1} shows that a generalized solenoid constructed in the manner described in Example \ref{ex:gen_sol}.(iii) cannot be realized as an attractor of a homeomorphism (in fact, of an arbitrary continuous map) of $\mathbb{R}^3$. Our first corollary sharpens this:

\begin{corollary} \label{cor:gen_sol_noatt} If a generalized solenoid can be realized as an attractor for a homeomorphism, then it must be (homeomorphic to) an $n$--adic solenoid for some $n$.
\end{corollary}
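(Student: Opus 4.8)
The plan is to combine the structural result of Theorem~\ref{teo:main1} with the explicit characterization of generalized solenoids afforded by Proposition~\ref{prop:charac_gsol}. Suppose $K$ is a generalized solenoid that is realized as an attractor for a homeomorphism $f$ of $\mathbb{R}^3$. By Theorem~\ref{teo:main1} the self-index $\mathcal{N}(K)$ must be number-like. The first task is to translate this information back into the combinatorial data defining $K$. Recall from Example~\ref{ex:gen_sol} that for a generalized solenoid with winding numbers $\{n_j\}$ (all $n_j > 1$) we have $N(T_{j+1},T_j) = n_j$, so that $\mathcal{N}(K)$ is exactly the direct limit of the sequence with arrows $\cdot n_j$. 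Since $\mathcal{N}(K)$ is number-like, by definition (and after the harmless regrouping of arrows permitted by Lemma~\ref{lem:canonical} and the remarks preceding it) the products of the $n_j$ stabilize, in the sense that $\mathcal{N}(K) = \mathbb{Z}[\nicefrac{1}{m}]$ for some integer $m \geq 2$.

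Next I would produce from this an \emph{honest $m$-adic solenoid}, call it $S_m$, defined by a standard basis in which each torus winds monotonically exactly $m$ times inside the previous one. This is a classical $n$-adic solenoid with $n = m$, so $N(T'_{j+1},T'_j) = m$ for all $j$ and hence $\mathcal{N}(S_m) = \mathbb{Z}[\nicefrac{1}{m}] = \mathcal{N}(K)$. The two self-indices are therefore isomorphic as groups. Since both $K$ and $S_m$ are generalized solenoids, Proposition~\ref{prop:charac_gsol} applies directly and yields that $K$ is homeomorphic to $S_m$. Thus $K$ is homeomorphic to the $m$-adic solenoid, which is precisely the assertion of the corollary with $n = m$.

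The one point that requires genuine care, and which I expect to be the main obstacle, is the passage from ``$\mathcal{N}(K)$ is number-like'' to ``$\mathcal{N}(K) \cong \mathbb{Z}[\nicefrac{1}{m}]$ for a single integer $m$'' in a way that makes Proposition~\ref{prop:charac_gsol} usable. The subtlety is that being number-like was defined (see the footnote in Section~\ref{sec:algebra}) as a property of a particular defining sequence, not intrinsically of $G$; so I must invoke Proposition~\ref{prop:n_like}, which shows that a number-like group has finitely many prime divisors $p_1,\ldots,p_r$ and equals $\mathbb{Z}[\nicefrac{1}{(p_1\cdots p_r)}]$, giving the canonical integer $m = p_1\cdots p_r$ intrinsically from $\mathcal{N}(K)$. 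With this in hand, the isomorphism $\mathcal{N}(K) \cong \mathcal{N}(S_m)$ is literally an equality of subgroups of $\mathbb{Q}$, and Proposition~\ref{prop:charac_gsol} closes the argument. (One should also note the degenerate case $\mathcal{N}(K) \sim 1$: this cannot occur for a generalized solenoid, since by Example~\ref{ex:gen_sol} the winding numbers satisfy $n_j > 1$, forcing $\mathcal{N}(K) \not\sim 1$, so $m \geq 2$ and $S_m$ is a nontrivial solenoid as required.)
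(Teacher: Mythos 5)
Your proposal is correct and follows essentially the same route as the paper: Theorem~\ref{teo:main1} gives that $\mathcal{N}(K)$ is number-like, Proposition~\ref{prop:n_like} pins it down as $\mathbb{Z}[\nicefrac{1}{(p_1\cdots p_r)}]$, and Proposition~\ref{prop:charac_gsol} then identifies $K$ with the corresponding $n$--adic solenoid, which is exactly the paper's argument. Your extra care about the non-intrinsic nature of ``number-like'' and the degenerate case $\mathcal{N}(K)\sim 1$ are sound refinements of the same proof, not a different approach.
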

\begin{proof} Let $K$ be a generalized solenoid that is an attractor for a homeomorphism. By Theorem \ref{teo:main1} its self-index is number-like; say $\mathcal{N}(K) \sim n$ for some $n$. The self-index of an $n$--adic solenoid $K'$ also satisfies $\mathcal{N}(K') \sim n$, so by Proposition \ref{prop:n_like} the groups $\mathcal{N}(K)$ and $\mathcal{N}(K')$ are isomorphic. Then Proposition \ref{prop:charac_gsol} implies that $K$ and the $n$--adic solenoid $K'$ are homeomorphic.
\end{proof}

Gathering some of our results we can now see that the realizability problem for homeomorphisms and flows are equivalent for certain classes of toroidal sets:

\begin{corollary} \label{cor:equivalence} The realizability problem for homeomorphisms and flows are equivalent for:
\begin{itemize}
	\item[(i)] Toroidal sets whose self-index has no prime factors.
	\item[(ii)] Nontrivial toroidal sets with positive genus.
\end{itemize}
\end{corollary}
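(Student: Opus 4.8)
The plan is to observe that one implication holds trivially and in full generality: if $K$ is an attractor for a flow, then it is an attractor for the time-one map of that flow, hence for a homeomorphism. The content of the corollary therefore lies entirely in the converse implication, and for each of the two families it reduces to the following statement: a toroidal set $K$ in that family which is realized as an attractor for a homeomorphism can also be realized as an attractor for a flow. By the \emph{moreover} clause of Theorem \ref{teo:main1}, proving this is equivalent to establishing that $\mathcal{N}(K) \sim 1$, so in both cases the whole problem collapses to computing (or sufficiently constraining) the self-index.

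For part (i) I would argue purely algebraically. Since $K$ is an attractor for a homeomorphism, Theorem \ref{teo:main1} guarantees that $\mathcal{N}(K)$ is number-like. By hypothesis $\mathcal{N}(K)$ has no prime factors, and Proposition \ref{prop:n_like} shows that the only number-like group without prime factors is $\mathbb{Z}$; that is, $\mathcal{N}(K) \sim 1$. The \emph{moreover} clause of Theorem \ref{teo:main1} then yields that $K$ is realizable as an attractor for a flow, which settles this case.

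For part (ii) I would instead route the argument through weak tameness. The result of \cite{hecyo1} recalled in the Introduction ensures that any toroidal set realized as an attractor for a homeomorphism has finite genus. Combining this finiteness with the standing hypotheses that $K$ has positive genus and is nontrivial, Theorem \ref{teo:carac2} applies and shows that $K$ is weakly tame. Since the weakly tame compacta are exactly those realizable as attractors for flows (the general result \cite{mio5} quoted in the Introduction), the desired conclusion follows. One could equivalently deduce $\mathcal{N}(K) \sim 1$ from weak tameness via Theorem \ref{teo:carac1} and then invoke Theorem \ref{teo:main1} as in part (i).

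Because essentially all of the analytic and topological work has been carried out in the earlier sections, these arguments are short and amount to assembling the right results. The only real point requiring care — and the chief (minor) obstacle — is recognizing that the two families demand different machinery: part (i) is a formal consequence of the number-likeness supplied by Theorem \ref{teo:main1} together with Proposition \ref{prop:n_like}, whereas part (ii) cannot be treated algebraically and instead relies on the genus-based weak tameness criterion of Theorem \ref{teo:carac2}, fed by the imported finiteness of genus from \cite{hecyo1}.
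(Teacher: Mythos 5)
Your proof is correct and takes essentially the same route as the paper: part (i) is settled by the number-likeness given by Theorem \ref{teo:main1} together with the fact (Proposition \ref{prop:n_like}) that $\mathbb{Z}$ is the only number-like group without prime factors, and part (ii) by importing finiteness of genus from \cite{hecyo1} and applying the weak tameness criterion of Theorem \ref{teo:carac2}. The only difference is that you make explicit two steps the paper leaves implicit, namely the trivial direction (time-one map of a flow) and the identification of weakly tame sets with flow attractors via \cite{mio5}.
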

\begin{proof} (i) If $K$ is an attractor for a homeomorphism, then $\mathcal{N}(K)$ is number-like. Since it does not have prime divisors, it must be $\mathcal{N}(K) \sim 1$. The result follows from Theorem \ref{teo:main1}.

(ii) If $K$ can be realized as an attractor for a homeomorphism of $\mathbb{R}^3$ then it has finite genus by \cite[Theorem 3.1, p. 13]{hecyo1} and then Theorem \ref{teo:carac2} guarantees that it is weakly tame.
\end{proof}

Of course, in general the realizability problems for flows and homeomorphisms are not equivalent:

\begin{example} Consider an unknotted solid torus $V_0 \subseteq \mathbb{R}^3$ and let $f : \mathbb{R}^3 \longrightarrow \mathbb{R}^3$ be a homeomorphism such that the solid torus $f(V_0)$ lies inside the interior of $V_0$ in the pattern of a Whitehead curve (the innermost curve in Figure \ref{fig:whitehead}). The positively invariant set $V_0$ contains the attractor $K := \bigcap_{j \geq 0} f^j(V_0)$, which is called the Whitehead continuum. The tori $T_j := f^j(T)$ form a neighbourhood basis for $K$ and, since $N(T_{j+1},T_j) = 2$ for each $j$, one has that $K$ is a toroidal set whose self-index $\mathcal{N}(K)$ is the dyadic rationals. In particular $K$ cannot be realized as an attractor for a flow by Theorem \ref{teo:flows}. (This fact was established by different means in \cite[Example 47, p. 3622]{mio6}). 
\end{example}

As mentioned in the Introduction, if a toroidal set $K$ can be realized as an attractor for a homeomorphism of $\mathbb{R}^3$ then its genus $g(K)$ must be finite (see \cite[Theorem 3.1, p. 13]{hecyo1}). Combining Corollary \ref{cor:equivalence} with the characterization of Theorem \ref{teo:flows} we can obtain the following partial converses of this result:

\begin{corollary} Let $K \subseteq \mathbb{R}^3$ be a nontrivial toroidal set with positive genus. Then $K$ can be realized as an attractor for a homeomorphism if and only if its genus is finite.
\end{corollary}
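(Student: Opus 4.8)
The plan is to prove the two implications separately, relying almost entirely on results already established in the paper rather than building anything new.

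The forward implication---that realizability as an attractor for a homeomorphism forces $g(K) < \infty$---requires no fresh work: it is exactly the content of \cite[Theorem 3.1, p. 13]{hecyo1}, which applies to every toroidal set and in particular to a nontrivial one of positive genus. So I would simply invoke it. For the converse, suppose $K$ is nontrivial, has positive genus, and satisfies $g(K) < \infty$. The idea is to funnel the problem through the flow case, where Theorem \ref{teo:flows} gives a clean characterization. First I would apply Theorem \ref{teo:carac2}: since $K$ has positive genus, that theorem says $K$ is weakly tame precisely when its genus is finite and it is nontrivial, both of which hold by hypothesis, so $K$ is weakly tame.

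The key observation is that weak tameness already supplies the self-index condition that Theorem \ref{teo:flows} demands. Indeed, by Theorem \ref{teo:carac1} weak tameness is equivalent to $g(K) < \infty$ together with $\mathcal{N}(K) \sim 1$; hence for a nontrivial set of positive genus the single hypothesis of finite genus automatically yields \emph{both} clauses ($g(K)<\infty$ and $\mathcal{N}(K)\sim 1$) appearing in Theorem \ref{teo:flows}. With both clauses in hand, Theorem \ref{teo:flows} guarantees that $K$ can be realized as an attractor for a flow, and passing to the time-one map of that flow realizes $K$ as an attractor for a homeomorphism. Equivalently, and more economically, I would just quote Corollary \ref{cor:equivalence}(ii)---which asserts that for nontrivial toroidal sets of positive genus the two realizability problems coincide---and combine it directly with the fact, established above, that in this class finite genus is equivalent to flow-realizability. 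Either route closes the stated equivalence.

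Since every step is a direct appeal to a prior result, there is no genuine technical obstacle to overcome. The only thing one must notice is the reduction itself: restricted to nontrivial positive-genus sets, the extra self-index hypothesis $\mathcal{N}(K)\sim 1$ of Theorem \ref{teo:flows} is not an independent requirement but is forced by finiteness of genus through weak tameness. Recognizing this is precisely what promotes ``finite genus'' to a complete criterion here, whereas in general---as Theorem \ref{teo:main1} and the Whitehead-continuum example show---finite genus alone is far from sufficient.
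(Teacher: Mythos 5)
Your proof is correct and follows essentially the same route as the paper, which obtains this corollary precisely by combining Corollary \ref{cor:equivalence}(ii) (itself resting on \cite[Theorem 3.1]{hecyo1} and Theorem \ref{teo:carac2}) with the flow characterization of Theorem \ref{teo:flows}. The reduction you highlight---that for nontrivial positive-genus sets, finite genus forces $\mathcal{N}(K)\sim 1$ via weak tameness---is exactly the observation the paper relies on.
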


\begin{corollary} Let $K \subseteq \mathbb{R}^3$ be a toroidal set such that $\mathcal{N}(K)$ has no prime factors. Then $K$ can be realized as an attractor for a homeomorphism if and only if its genus is finite and $\mathcal{N}(K) \sim 1$.
\end{corollary}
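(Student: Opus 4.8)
The plan is to deduce this corollary directly from Corollary \ref{cor:equivalence}.(i) together with the flow characterization of Theorem \ref{teo:flows}, so that no new dynamical input is required: all the substance has already been distilled into those earlier results. Concretely, Corollary \ref{cor:equivalence}.(i) asserts that for a toroidal set whose self-index has no prime factors---which is exactly our standing hypothesis on $K$---realizability as an attractor for a homeomorphism is equivalent to realizability as an attractor for a flow. Theorem \ref{teo:flows} in turn characterizes the latter as finite genus together with $\mathcal{N}(K) \sim 1$. Chaining these two equivalences yields precisely the stated biconditional.

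Should a more self-contained argument be preferred, I would argue the two implications separately. For the forward direction, assume $K$ is an attractor for a homeomorphism. Then \cite[Theorem 3.1, p. 13]{hecyo1} forces $g(K)$ to be finite, while Theorem \ref{teo:main1} guarantees that $\mathcal{N}(K)$ is number-like. A number-like group with no prime factors can only be $\mathbb{Z}$ by Proposition \ref{prop:n_like}, so the hypothesis on $\mathcal{N}(K)$ upgrades to $\mathcal{N}(K) \sim 1$. For the converse, finite genus together with $\mathcal{N}(K) \sim 1$ makes $K$ weakly tame by Theorem \ref{teo:carac1}; weakly tame toroidal sets are realizable as attractors for flows by Theorem \ref{teo:flows}, and passing to the time-one map of such a flow realizes $K$ as an attractor for a homeomorphism.

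There is essentially no obstacle to overcome at this stage: the delicate work---the number-likeness of the self-index of a discrete attractor (Theorem \ref{teo:main1}) and the finiteness of the genus of a discrete attractor (the cited result from \cite{hecyo1})---has already been carried out. The only point that deserves a word of care is the use of Proposition \ref{prop:n_like} to pass from ``number-like with no prime factors'' to $\mathcal{N}(K) \sim 1$; this is exactly the degenerate case $m = 1$ recorded after that proposition, and the hypothesis that $\mathcal{N}(K)$ has no prime factors is precisely what excludes every other number-like possibility.
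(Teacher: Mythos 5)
Your proof is correct and follows exactly the paper's route: the paper derives this corollary precisely by combining Corollary \ref{cor:equivalence}.(i) with the flow characterization of Theorem \ref{teo:flows}, which is your first paragraph. Your self-contained expansion merely unfolds the same chain (Theorem \ref{teo:main1}, Proposition \ref{prop:n_like}, Theorem \ref{teo:carac1}, and the time-one map) and is likewise sound.
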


Now we turn to the proof of Theorem \ref{teo:main1}. Since the self-index is defined in terms of polyhedral solid tori, it is convenient to be able to replace the purely topological situation of the theorem with a piecewise linear one. This is accomplished by the following lemma:

\begin{lemma} \label{lem:attapprox} Let $K \subseteq \mathbb{R}^3$ be an attractor for a homeomorphism $f$ of $\mathbb{R}^3$. Then $K$ can be realized as an attractor for a homeomorphism of $\mathbb{R}^3$ that is piecewise linear on $\mathbb{R}^3 - K$.
\end{lemma}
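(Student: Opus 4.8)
The goal of Lemma \ref{lem:attapprox} is to upgrade a purely topological attractor to one whose generating homeomorphism is piecewise linear away from $K$, so that the self-index (which is defined via polyhedral tori) can be computed directly. The plan is to leave the dynamics untouched near $K$ and only approximate $f$ on the complement $\mathbb{R}^3 - K$, much in the spirit of the proof of Proposition \ref{prop:local}, which already shows how to produce a piecewise linear homeomorphism of an open $3$--manifold that agrees with a given topological one up to a controllable error and then extends continuously across $K$.

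First I would fix a closed solid torus (or any compact neighbourhood) $T$ of $K$ contained in the interior of the basin $\mathcal{A}(K)$ and satisfying $f(T) \subseteq \mathrm{int}\ T$; such a $T$ exists because $K$ is a stable attractor, so we may trap a trapping region. The key observation is that $K = \bigcap_{n \geq 0} f^n(T)$, and the dynamics on $\mathbb{R}^3 - K$ is completely determined by the behaviour on the compact ``fundamental domain'' $T - \mathrm{int}\ f(T)$. The plan is therefore to replace $f$ with a new homeomorphism $g$ that is piecewise linear on $\mathbb{R}^3 - K$ but still has $K$ as a stable attractor with the same trapping region.

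The main construction goes as follows. On the compact annular region $R := T - \mathrm{int}\ f(T)$ the map $f$ carries $\partial T$ to $\partial f(T)$; I would use the approximation theorem of Moise (\cite[Theorem 1, p. 253]{moise2}), exactly as in Proposition \ref{prop:local}, to approximate $f|_R$ arbitrarily closely by a piecewise linear homeomorphism $g_0 : R \longrightarrow f(T) - \mathrm{int}\ f^2(T)$ that agrees with $f$ on $\partial T$. Because the approximation can be taken to coincide with $f$ on $\partial T$ and to respect the bounding tori, $g_0$ provides a single fundamental-domain map which can then be propagated: define $g$ on $f^n(T) - \mathrm{int}\ f^{n+1}(T)$ by conjugating $g_0$ through the identification of successive fundamental domains, and extend by $g := f$ outside $T$ and $g := f$ on $K$. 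The resulting $g$ is a homeomorphism of $\mathbb{R}^3$ which is piecewise linear on $\mathbb{R}^3 - K$, agrees with $f$ on $\partial T$ and outside $T$, and has $T$ as a trapping region, so $K = \bigcap_n g^n(T)$ is again a stable attractor for $g$. Continuity of $g$ across $K$ follows exactly as in Proposition \ref{prop:local}: by choosing the approximation error on each fundamental domain to shrink fast enough (controlled by the distance to $K$), the maps paste together to a continuous bijection, which is a homeomorphism by invariance of domain.

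The step I expect to be the main obstacle is ensuring that the piecewise linear maps on the individual fundamental domains \emph{paste together continuously across $K$} and glue compatibly along the shared boundary tori $f^n(\partial T)$. One must arrange the approximation on each $R_n := f^n(T) - \mathrm{int}\ f^{n+1}(T)$ so that the restrictions to the common boundary $f^{n+1}(\partial T)$ match exactly; the cleanest way is to build the single model map $g_0$ on $R$ first (agreeing with $f$ on both boundary components of $R$, which is possible because $f|_{\partial T}$ can itself be taken piecewise linear after a preliminary approximation) and then define $g$ on each $R_n$ as the unique map making the diagram with $g_0$ and the homeomorphisms $f^n$ commute. Matching on boundaries is then automatic, and the only remaining delicacy is the quantitative control near $K$, which is handled exactly by the distance-to-$K$ error bound used in Proposition \ref{prop:local}.
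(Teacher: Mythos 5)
Your high-level plan (approximate on the complement, extend by the original map across $K$, conclude via invariance of domain) is the right one, but the central mechanism you propose fails. Defining $g$ on $R_n := f^n(T) - \mathrm{int}\, f^{n+1}(T)$ as ``the unique map making the diagram with $g_0$ and the homeomorphisms $f^n$ commute'' means $g|_{R_n} = f^n \circ g_0 \circ f^{-n}$, and conjugation by the merely topological homeomorphism $f^n$ destroys piecewise linearity: the resulting $g$ is PL on the single domain $R_0$ and only topological on every $R_n$ with $n \geq 1$, so nothing has been gained. The same defect occurs outside $T$, where you set $g := f$, whereas the lemma requires $g$ to be PL on \emph{all} of $\mathbb{R}^3 - K$. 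There is also a prior, structural problem: the blocks $R_n$ are not polyhedra, because their inner boundaries $f^n(\partial T)$ are tame but non-polyhedral topological tori (only $\partial T$ itself is polyhedral). Hence ``a piecewise linear homeomorphism $g_0 : R \longrightarrow f(T) - \mathrm{int}\, f^2(T)$ agreeing with $f$ on $\partial T$'' is not even a well-formed requirement, and Moise's approximation theorem neither applies directly to such domains nor lets you prescribe boundary values along non-polyhedral surfaces. Your ``main obstacle'' paragraph worries about boundary matching and continuity at $K$, but those are not where the argument breaks; the loss of piecewise linearity under conjugation is.

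The paper's proof sidesteps all of this with a single global application of Moise's theorem on the open set $U = \mathbb{R}^3 - K$, using an error function $\phi$ that decays near $K$: for a trapping sequence $P_k = f^k(P)$ with $P_{k+1} + \phi_k \subseteq P_k$ and $\phi_k \downarrow 0$, one sets $\phi = \phi_{k+1}$ on $P_k - P_{k+1}$. Continuity of the extension across $K$ then goes exactly as in Proposition \ref{prop:local}, as you anticipated. The one subtlety you would still need even after repairing your construction is that the paper approximates $f^2$, not $f$: with errors bounded by $\phi_{k+1}$ on $P_k - K$ one gets $g(P_k - K) \subseteq f^2(P_k) + \phi_{k+1} = P_{k+2} + \phi_{k+1} \subseteq P_{k+1}$, so the approximating homeomorphism still carries each $P_k$ strictly inside $P_{k+1}$ and $K$ survives as an attractor; approximating $f$ itself would only give $\hat{g}(P_k) \subseteq P_k$, i.e.\ positive invariance, from which attraction to $K$ does not follow. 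If you insist on the fundamental-domain picture, you would have to build the PL map inductively, region by region, taming each image torus and matching PL boundary data as you go — at which point the paper's one-shot global approximation is both shorter and cleaner.
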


In the proof of the lemma we will use the following notation: if $A \subseteq \mathbb{R}^3$ is a compact set and $\phi > 0$ is a positive number, we write $A + \phi$ to denote the set of points whose distance to $A$ is less than or equal to $\phi$. Obviously $A + \phi$ is also a compact set.

\begin{proof}[Proof of Lemma \ref{lem:attapprox}] Since $K$ is a stable attractor, it has a compact neighbourhood $P$ contained in its basin of attraction and such that $f(P)$ is contained in the interior of $P$. Define $P_k := f^k(P)$ for $k \geq 0$. These sets form a decreasing neighbourhood basis for $K$. Also, since each $P_{k+1}$ is contained in the interior of $P_k$, there exists a positive number $\phi_k > 0$ such that $P_{k+1} + \phi_k \subseteq P_k$. Clearly the $\phi_k$ can be chosen so that the sequence $\{\phi_k\}$ is strictly decreasing and converges to zero.

Let $U := \mathbb{R}^3 - K$. Notice that $U$ is the union of the sets $(P_k - P_{k+1})$ together with $\mathbb{R}^3 - P_0$. Define a (noncontinuous) mapping $\phi$ on $U$ by \[\phi(p) := \left\{ \begin{array}{ll} \phi_1 & \text{ if } p \in \mathbb{R}^3 - P_0, \\ \phi_{k+1} & \text{ if } p \in P_k - P_{k+1} \ (k \geq 0). \end{array} \right.\] It is easy to check (using that $\{\phi_k\}$ is a decreasing sequence) that $\phi|_{\mathbb{R}^3 - P_k}$ is bounded below by $\phi_{k+1}$. Since every compact subset of $U$ is contained in a set of the form $\mathbb{R}^3 - P_k$, it follows that $\phi$ is bounded away from zero on every compact subset of $U$. Thus $\phi$ is strictly positive in the sense of Moise \cite[p. 46]{moise2}. The map $f^2$ is a homeomorphism of $U$, and by the same approximation theorem that we invoked in the proof of Proposition \ref{prop:local} (namely \cite[Theorem 1, p. 253]{moise2}) there exists a piecewise linear homeomorphism $g$ of $U$ that is a $\phi$--aproximation of $f^2$; that is, such that ${\rm d}(f^2(p),g(p)) < \phi(p)$ for every $p \in U$.

Extend $g$ to a map $\hat{g}$ on all of $\mathbb{R}^3$ by defining $\hat{g} = f^2$ on $K$ (and of course $\hat{g} = g$ on $U$). Clearly $\hat{g}$ is a bijection of $\mathbb{R}^3$ and it is continous on $U$. We claim that $\hat{g}$ is also continuous on each $p \in K$. To check this pick $p \in K$ and a sequence $\{p_n\}$ in $U$ converging to $p$. By definition $\hat{g}(p) = f^2(p)$ and $\hat{g}(p_n) = g(p_n)$, so we may write \[{\rm d}(\hat{g}(p),\hat{g}(p_n)) = {\rm d}(f^2(p),g(p_n)) \leq {\rm d}(f^2(p),f^2(p_n)) + {\rm d}(f^2(p_n),g(p_n)) < {\rm d}(f^2(p),f^2(p_n)) + \phi(p_n)\] where in the last step we have used that $g$ is a $\phi$--approximation to $f^2$ on $U$. Now notice that $\{p_n\}$ eventually enters every $P_k$ and so $\phi(p_n)$ eventually becomes less than or equal to $\phi_{k+1}$ (we are again using the fact that $\{\phi_k\}$ is decreasing). Thus $\{\phi(p_n)\}$ converges to zero because the $\{\phi_k\}$ were chosen to converge to zero. Similarly ${\rm d}(f^2(p),f^2(p_n))$ converges to zero because $f^2$ is continuous. Hence the distance between $\hat{g}(p)$ and $\hat{g}(p_n)$ converges to zero and so $\hat{g}$ is continuous at $p$. In sum, $\hat{g}$ is a continuous bijection of $\mathbb{R}^3$ and therefore (by the invariance of domain theorem) it is a homeomorphism. Evidently it is piecewise linear on $\mathbb{R}^3 - K$ by construction.

To prove the lemma it only remains to show that $K$ is an attractor for $\hat{g}$. Since $g$ is a $\phi$--approximation to $f^2$ and $\phi|_{P_k - K} \leq \phi_{k+1}$, we have $g(P_k - K) \subseteq f^2(P_k - K) + \phi_{k+1}$. Using $f^2(P_k) = P_{k+2}$ we may then write \[g(P_k - K) \subseteq f^2(P_k - K) + \phi_{k+1} \subseteq P_{k+2} + \phi_{k+1} \subseteq P_{k+1}.\] Consequently $\hat{g}(P_k)$, which is the union of $g(P_k - K)$ and $\hat{g}(K) = f^2(K) = K$, is also contained in $P_{k+1}$. It follows that $K$ is an attractor for $\hat{g}$.
\end{proof}

\begin{proof}[Proof of Theorem \ref{teo:main1}] By Lemma \ref{lem:attapprox} we may assume that the homeomorphism $f$ which realizes $K$ as an attractor is piecewise linear on $\mathbb{R}^3 - K$. Denote by $\mathcal{A}(K)$ be the basin of attraction of $K$. Since $K$ is toroidal, it has a neighbourhood $T$ that is a polyhedral solid torus contained in $\mathcal{A}(K)$. Let $n$ be a sufficiently high iterate of $f$ so that $f^n(T)$ is contained in the interior of $T$ and define $T_j := f^{n j}(T)$ for $j \geq 0$. These $\{T_j\}$ form a nested family of polyhedral solid tori that provide a neighbourhood basis for $K$.

Now consider any pair $T_{j+1} \subseteq T_j$. The map $f^{nj}$ provides a homeomorphism from the pair $(T_0,T_1)$ onto the pair $(T_j,T_{j+1})$ which is piecewise linear on $T_0 - {\rm int}\ T_1$. Thus by Lemma \ref{lem:rel_hom} we have that $N(T_{j+1},T_j) = N(T_1,T_0)$. It then follows from Proposition \ref{prop:indep} that $\mathcal{N}(K)$ is number-like.

To conclude the proof observe that $K$ has finite genus because all the $T_j$ are ambient homeomorphic to each other and therefore have the same genus, and then Theorem \ref{teo:carac1} shows that $\mathcal{N}(K) \sim 1$ if and only if $K$ is weakly tame. But the latter is equivalent to $K$ being realizable as an attractor for a flow.
\end{proof}

Summing up, so far the panorama (organized in terms of the genus) is the following: given a toroidal set $K \subseteq \mathbb{R}^3$,
\begin{itemize}
	\item[(i)] If $g(K) = +\infty$, then $K$ cannot be realized as an attractor for a homeomorphism of $\mathbb{R}^3$.
	\item[(ii)] If $0 < g(K) < +\infty$ and $K$ is nontrivial, then it can be realized as an attractor for a flow and therefore also for a homeomorphism of $\mathbb{R}^3$.
	\item[(iii)] If $g(K) = 0$ (that is, $K$ is unknotted) and $\mathcal{N}(K) \sim 1$, then it can be realized as an attractor for a flow and therefore also for a homeomorphism of $\mathbb{R}^3$.
\end{itemize}

This leaves two cases to study: (A) unknotted toroidal sets with $\mathcal{N} \not\sim 1$, and (B) trivial toroidal sets with genus $0 < g(K) < +\infty$. We conclude this paper with two theorems that show that in both cases there exist toroidal sets that cannot be realized as attractors for a homeomorphism of $\mathbb{R}^3$. To do this we draw on the results of Section \ref{sec:construct}.
\medskip

A) \underline{\it Unknotted toroidal sets.} We already showed in Corollary \ref{cor:gen_sol_noatt} that, apart from $n$--adic solenoids, no generalized solenoid can be realized as an attractor. Notice that generalized solenoids are all unknotted toroidal sets because of the condition that the defining tori wind monotonically inside each other. The following theorem produces many more examples of unknotted toroidal sets that cannot be realized as attractors for homeomorphisms:

\begin{theorem} \label{teo:nonattracting1} Let $H$ be a feasible group. There exists an uncountable family $\{K_{\alpha}\}$ of toroidal sets such that:
\begin{itemize}
	\item[(i)] None of the $K_{\alpha}$ can be realized as an attractor for a homeomorphism of $\mathbb{R}^3$.
	\item[(ii)] The $K_{\alpha}$ are pairwise different (that is, not ambient homeomorphic).
	\item[(iii)] Each $K_{\alpha}$ is unknotted.
	\item[(iv)] Each $K_{\alpha}$ has $H$ as its first \v{C}ech cohomology group.
\end{itemize}
\end{theorem}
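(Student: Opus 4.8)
The plan is to reduce the whole statement to the two main results already at our disposal: Theorem \ref{teo:construct}, which lets us prescribe simultaneously the first \v{C}ech cohomology group and the self-index of an \emph{unknotted} toroidal set, and Theorem \ref{teo:main1}, which forces the self-index of any attractor for a homeomorphism to be number-like. The strategy is to manufacture uncountably many feasible groups $N_\alpha$ that are (a) none of them number-like and (b) pairwise non-isomorphic, realize each $N_\alpha$ as the self-index of an unknotted toroidal set $K_\alpha$ with $\check{H}^1(K_\alpha;\mathbb{Z}) = H$, and then read off the four conclusions. The self-index will serve both as the obstruction to realizability (via non-number-likeness) and as the invariant that separates the $K_\alpha$.

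First I would record the parity data forced by the consistency condition in Theorem \ref{teo:construct}: set $\varepsilon = 1$ if $2 \mid H$ and $\varepsilon = 0$ otherwise. Then I choose an uncountable family $\{Q_\alpha\}$ of pairwise distinct infinite sets of odd primes, which exists because the collection of infinite subsets of the (countable) set of odd primes is uncountable, and put $P_\alpha := Q_\alpha$ when $\varepsilon = 0$ and $P_\alpha := \{2\} \cup Q_\alpha$ when $\varepsilon = 1$, so that $2 \in P_\alpha$ precisely when $2 \mid H$. For each $\alpha$, Example \ref{ex:p_div}(iii) furnishes a feasible group $N_\alpha$ whose set of prime divisors is exactly $P_\alpha$, arising as the direct limit of multiplications by $m_j = p_1 \cdot \ldots \cdot p_j$ (an enumeration of $P_\alpha$). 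Since every $m_j \geq 2$, Lemma \ref{lem:canonical} gives that $N_\alpha$ is not finitely generated, so $N_\alpha \neq 0$ and $N_\alpha \not\sim 1$; and since $P_\alpha$ is infinite, $N_\alpha$ has infinitely many prime divisors and hence is not number-like by Proposition \ref{prop:n_like}. The consistency condition $2 \mid H \iff 2 \mid N_\alpha$ holds by the choice of $\varepsilon$.

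Next I apply Theorem \ref{teo:construct} to each pair $(H, N_\alpha)$ to obtain an unknotted toroidal set $K_\alpha$ with $\check{H}^1(K_\alpha;\mathbb{Z}) = H$ and $\mathcal{N}(K_\alpha) = N_\alpha$; this is exactly (iii) and (iv). For (i), Theorem \ref{teo:main1} shows that if $K_\alpha$ were an attractor for a homeomorphism then $\mathcal{N}(K_\alpha) = N_\alpha$ would be number-like, contradicting our construction, so no $K_\alpha$ is such an attractor. For (ii), I would use that the set of prime divisors of a group is an isomorphism invariant: the property that every element is divisible by a given prime $p$ is preserved under any group isomorphism, so isomorphic groups have the same prime divisors. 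Hence distinct $P_\alpha$ force the $\mathcal{N}(K_\alpha) = N_\alpha$ to be pairwise non-isomorphic, and since the self-index is preserved by ambient (indeed, by merely local) homeomorphisms by Proposition \ref{prop:local}, the $K_\alpha$ are pairwise non-ambient-homeomorphic. This gives the required uncountable family.

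The substantive content of the theorem is already absorbed into Theorems \ref{teo:construct} and \ref{teo:main1}, so what remains is essentially a matter of organizing one combinatorial parameter. The only point genuinely requiring care is that the single family $\{P_\alpha\}$ must be engineered to control all four properties at once: each $P_\alpha$ must be \emph{infinite} (so that $N_\alpha$ is not number-like, yielding (i) through Theorem \ref{teo:main1}), the $P_\alpha$ must be \emph{pairwise distinct} with the prime-divisor set used as a separating invariant (yielding (ii) through Proposition \ref{prop:local}), and their intersection with $\{2\}$ must match the parity of $H$ (so that Theorem \ref{teo:construct} applies at all and produces the prescribed $H$). Verifying that these three constraints are mutually compatible — which the decomposition $P_\alpha = \{2\}^{\text{(if }\varepsilon=1)} \cup Q_\alpha$ accomplishes — is the main thing to get right.
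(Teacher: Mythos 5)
Your proposal is correct and follows essentially the same route as the paper's own proof: an uncountable family of infinite prime sets respecting the parity of $H$, realized via Theorem \ref{teo:construct} with Example \ref{ex:p_div} (which is exactly Corollary \ref{cor:construct}), with Theorem \ref{teo:main1} ruling out realizability and Proposition \ref{prop:local} separating the $K_\alpha$. The only differences are cosmetic: you treat both parity cases uniformly via $\varepsilon$ where the paper splits into two analogous cases, and you spell out the (implicit in the paper) observation that the set of prime divisors is a group-isomorphism invariant.
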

\begin{proof} We prove the theorem in the case when no element in $H$ is divisible by $2$. The other case (when every element in $H$ is divisible by $2$) is completely analogous. Let the family $\{P_{\alpha}\}$ run over all infinite sets $P_{\alpha}$ of prime numbers such that $2 \not\in P_{\alpha}$. Evidently $\{P_{\alpha}\}$ is an uncountable family. By Corollary \ref{cor:construct} for each $\alpha$ there exists an unknotted toroidal set $K_{\alpha}$ such that $\check{H}^1(K_{\alpha};\mathbb{Z}) = H$ and the prime divisors of $\mathcal{N}(K_{\alpha})$ are precisely the elements of $P_{\alpha}$. Since each $P_{\alpha}$ is infinite, it follows that $\mathcal{N}(K_{\alpha})$ is not number-like and then from Theorem \ref{teo:main1} we conclude that no $K_{\alpha}$ can be realized as an attractor for a homeomorphism of $\mathbb{R}^3$. Finally, since $P_{\alpha} \neq P_{\beta}$ for $\alpha \neq \beta$, by Proposition \ref{prop:local} the toroidal sets $K_{\alpha}$ and $K_{\beta}$ cannot be ambient homeomorphic (not even locally ambient homeomorphic).
\end{proof}

\medskip

B) \underline{\it Trivial toroidal sets with positive genus.} As mentioned in Section \ref{sec:construct} our understanding of this case is quite limited. Thus we will just consider toroidal sets with genus $g = 1$:

\begin{theorem} \label{teo:nonattracting2} There exists an uncountable family $\{K_{\alpha}\}$ of toroidal sets such that:
\begin{itemize}
	\item[(i)] None of the $K_{\alpha}$ can be realized as an attractor for a homeomorphism of $\mathbb{R}^3$.
	\item[(ii)] The $K_{\alpha}$ are pairwise different (that is, not ambient homeomorphic).
	\item[(iii)] Each $K_{\alpha}$ has genus $1$.
	\item[(iv)] Each $K_{\alpha}$ is trivial.
\end{itemize}
\end{theorem}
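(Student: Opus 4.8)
The plan is to follow the blueprint of the proof of Theorem \ref{teo:nonattracting1}, but to feed it with the construction of trivial genus-one sets provided by Corollary \ref{cor:construct_trivial} instead of the unknotted sets used there. The guiding principle is the same: the only obstruction to realizability that we are able to detect is the failure of the self-index to be number-like, and by Theorem \ref{teo:main1} any toroidal set whose self-index has infinitely many prime factors can never be an attractor for a homeomorphism. Thus I would manufacture an uncountable supply of toroidal sets whose self-indices have infinitely many prime divisors while keeping their genus equal to $1$ and keeping them trivial.

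Concretely, I would let $\alpha$ range over all \emph{infinite} sets $P_\alpha$ of primes that contain $2$. This family is uncountable: the infinite subsets of the (countable) set of primes greater than $2$ are already uncountably many, and adjoining $2$ to each yields an infinite prime set containing $2$. The insistence that $2 \in P_\alpha$ is dictated by the construction: a trivial toroidal set has $\check H^1 = 0$, whose unique element $0$ is divisible by $2$, so $2 \mid H$ and the consistency condition forces $2 \mid \mathcal N(K)$; this is precisely the hypothesis $2 \in P$ demanded by Corollary \ref{cor:construct_trivial}.

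For each $P_\alpha$ I would then apply Corollary \ref{cor:construct_trivial} to obtain a trivial toroidal set $K_\alpha$ of genus exactly $1$ whose self-index $\mathcal N(K_\alpha)$ has prime divisors precisely the elements of $P_\alpha$; this settles properties (iii) and (iv) at once. For (i), since $P_\alpha$ is infinite the group $\mathcal N(K_\alpha)$ has infinitely many prime factors, hence is not number-like (Proposition \ref{prop:n_like} tells us a number-like group has only finitely many prime divisors), and Theorem \ref{teo:main1} then rules out $K_\alpha$ being an attractor for any homeomorphism of $\mathbb R^3$. For (ii), I would note that a (local) ambient homeomorphism between $K_\alpha$ and $K_\beta$ would force $\mathcal N(K_\alpha)=\mathcal N(K_\beta)$ by Proposition \ref{prop:local}, whence $P_\alpha = P_\beta$ and $\alpha=\beta$; thus the $K_\alpha$ are pairwise distinct.

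Because all of the substantive geometry — realizing a prescribed feasible self-index with $2\mid N$ on a trivial set of genus one, built from the Whitehead-clasp tori of Lemma \ref{lem:torus} — is already encapsulated in Corollary \ref{cor:construct_trivial}, I anticipate the present argument to be essentially bookkeeping with no serious obstacle. The one genuine limitation, inherited from Section \ref{sec:construct}, is that the method only delivers genus $1$; producing trivial sets of higher finite genus with non-number-like self-index would require a new construction that is not available to us here.
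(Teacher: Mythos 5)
Your proposal is correct and is essentially the paper's own proof: the paper simply declares the argument ``completely analogous'' to Theorem \ref{teo:nonattracting1} with Corollary \ref{cor:construct_trivial} supplying the $K_{\alpha}$, which is exactly what you carry out, including the necessary adjustment of taking infinite prime sets \emph{containing} $2$ (forced by the consistency condition, since $\check{H}^1 = 0$ is $2$-divisible) and the distinctness argument via Proposition \ref{prop:local}. No gaps; your write-up is in fact more explicit than the paper's one-line proof.
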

\begin{proof} The proof is completely analogous to that of Theorem \ref{teo:nonattracting1} but using Corollary \ref{teo:construct_trivial} to produce the $K_{\alpha}$.
\end{proof}

To conclude we would like to devote a few lines to yet another variation on the realizability problem. The result that a toroidal set that is an attractor for a homeomorphism has a finite genus depends crucially on the homeomorphism being defined on all of $\mathbb{R}^3$. Sometimes one is interested in dynamics that are defined only locally, and then one may want to consider a variation of the realizability problem where the (toroidal) set $K \subseteq \mathbb{R}^3$ is required to be an attractor only for a \emph{local} homeomorphism (rather than a homeomorphism defined on all of $\mathbb{R}^3$). For instance, consider an $n$--adic solenoid $K \subseteq \mathbb{R}^3$. This is clearly an attractor for a homeomorphism $f$ of $\mathbb{R}^3$. Now let $T$ be a solid torus that is a positively invariant neighbourhood of $K$ and let $e : T \longrightarrow e(T) \subseteq \mathbb{R}^3$ be an embedding of $T$ in a nontrivially knotted fashion. Denote by $K' := e(K)$. Evidently $K'$ is still an attractor for the local homeomorphism $efe^{-1} : e(T) \longrightarrow e(T)$. However, $K'$ has infinite genus (see Example \ref{ex:knotted}.(2)). Thus, it is indeed possible to have toroidal sets that can be realized as attractors for local homeomorphisms and have infinite genus (in particular, they cannot be realized as attractors for globally defined homeomorphisms).\footnote{Perhaps we should point out that this phenomenon does not occur for flows, because any local flow can be slowed down near the boundary of its domain of definition and extended (by letting it be stationary) to all of the ambient space.}

The self-index is more robust in this regard. The proof of Lemma \ref{lem:attapprox} works in any $3$--manifold and, as a consequence, it also works if we assume only that $f$ is a local homeomorphism (just replace $\mathbb{R}^3$ with the domain of $f$). In turn, the part of the proof of Theorem \ref{teo:main1} that shows that $\mathcal{N}(K)$ is number-like is also valid in such a situation. That is, the following generalization of Theorem \ref{teo:main1} holds:

\begin{theorem*} Let $K$ be a toroidal set that is an attractor for a local homeomorphism $f$ of a $3$--manifold. Then $\mathcal{N}(K)$ is number-like.
\end{theorem*}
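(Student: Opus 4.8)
The plan is to verify that neither ingredient in the number-like part of the proof of Theorem~\ref{teo:main1} uses anything beyond the local structure of $f$ near $K$, so that both survive when $\mathbb{R}^3$ is replaced by the manifold on which the local homeomorphism acts. Concretely, I would re-run the proof of Lemma~\ref{lem:attapprox} and then the opening paragraph of the proof of Theorem~\ref{teo:main1}, changing only the ambient space.

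For the analogue of Lemma~\ref{lem:attapprox}, let $M$ be the $3$-manifold on which $f$ is defined and let $\mathcal{A}(K)$ be the basin of attraction. By Lyapunov stability together with attraction, $K$ has a compact neighbourhood $P \subseteq \mathcal{A}(K)$ with $f(P) \subseteq {\rm int}\ P$, and the iterates $P_k := f^k(P)$ form a decreasing neighbourhood basis of $K$ exactly as before. On the open $3$-manifold $U := ({\rm int}\ P) - K$ one defines the same strictly positive gauge $\phi$ built from the gaps between consecutive $P_k$. The only external tools invoked in the original argument are Moise's approximation theorem \cite[Theorem~1, p.~253]{moise2} and the invariance of domain theorem, both of which hold on arbitrary open $3$-manifolds; indeed the approximation theorem was already used in Proposition~\ref{prop:local} for a homeomorphism between two \emph{distinct} open $3$-manifolds. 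The one point deserving care is that here $f^2$ is only a homeomorphism of $U$ onto its open image $f^2(U) \subsetneq U$ rather than onto all of $U$, but this is precisely the generality that Moise's theorem provides. One thus obtains a PL $\phi$-approximation $g$ of $f^2$ on $U$, extends it by $f^2$ over $K$, and checks (using invariance of domain for injectivity, and $g(P_k - K) \subseteq f^2(P_k-K)+\phi_{k+1} \subseteq P_{k+1}$ for the trapping estimate) that the extension $\hat g$ is a local homeomorphism, PL off $K$, for which $K$ is again an attractor.

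With this PL model in hand, the first paragraph of the proof of Theorem~\ref{teo:main1} applies verbatim. Since $K$ is toroidal it has a polyhedral solid-torus neighbourhood $T$ inside $P$; choosing an iterate with $\hat g^{\,n}(T) \subseteq {\rm int}\ T$ and setting $T_j := \hat g^{\,nj}(T)$ gives a standard basis for $K$. For each $j$ the map $\hat g^{\,nj}$ is a homeomorphism of pairs $(T_0,T_1) \to (T_j,T_{j+1})$ that is PL on $T_0 - {\rm int}\ T_1$, so Lemma~\ref{lem:rel_hom} yields $N(T_{j+1},T_j) = N(T_1,T_0)$ for every $j$. As all the indices in the defining sequence coincide, Proposition~\ref{prop:indep} shows that $\mathcal{N}(K)$ is number-like.

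The step where the argument necessarily stops — and the main difference from the global case — is the deduction of \emph{finite genus}. In Theorem~\ref{teo:main1} this followed from the $T_j$ being pairwise \emph{ambient} homeomorphic in $\mathbb{R}^3$; when $f$ is merely a local homeomorphism the maps $\hat g^{\,nj}$ need not extend to the whole manifold, so this conclusion, and with it the equivalence $\mathcal{N}(K)\sim 1 \Leftrightarrow$ flow, is unavailable. That this is a real phenomenon rather than a gap in the method is shown by the knotted solenoid of Example~\ref{ex:knotted}.(2), which is a local attractor of infinite genus. Hence the local statement is genuinely weaker, asserting only that $\mathcal{N}(K)$ is number-like.
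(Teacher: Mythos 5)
Your proposal is correct and follows exactly the paper's own route: the paper proves this statement precisely by observing that the proof of Lemma~\ref{lem:attapprox} goes through on any $3$--manifold (with the same gauge $\phi$, Moise's approximation theorem, and invariance of domain) and that the first part of the proof of Theorem~\ref{teo:main1} — constructing the basis $T_j = \hat g^{nj}(T)$, applying Lemma~\ref{lem:rel_hom} to get equal indices, and invoking Proposition~\ref{prop:indep} — then yields number-likeness, while the finite-genus/flow part is exactly what is lost. Your fleshing out of the details (restricting to $U = ({\rm int}\,P)-K$ and noting that Moise's theorem handles a homeomorphism onto a proper open image) is a faithful and accurate completion of the paper's sketch.
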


Since the nonattracting nature of the toroidal sets $K_{\alpha}$ produced in Theorems \ref{teo:nonattracting1} and \ref{teo:nonattracting2} follows from the fact that $\mathcal{N}(K_{\alpha})$ has infinitely many prime divisors and hence is not number-like, in both cases the $K_{\alpha}$ cannot be realized even as attractors for local homeomorphisms.

\bibliographystyle{plain}
\bibliography{biblio}

\end{document}